\documentclass[11pt,a4paper]{article}


\usepackage[utf8]{inputenc}
\usepackage[english]{babel}
\usepackage{amsmath}
\usepackage{amsfonts}
\usepackage{amssymb}
\usepackage{graphicx}
\usepackage[utf8]{inputenc}
\usepackage[english]{babel}
\usepackage{amsmath}
\usepackage{amsfonts}
\usepackage{amssymb}
\usepackage{amsthm}
\usepackage{mathtools}
\usepackage{nicefrac}
\usepackage[colorlinks=true,citecolor=blue]{hyperref}
\usepackage{cleveref}
\usepackage{bm}
\usepackage{float}
\usepackage{graphicx}
\usepackage{natbib}
\usepackage{bbm}
\usepackage{subfig}
\usepackage{wrapfig}
\usepackage[]{appendix}
\usepackage{caption}
\usepackage{setspace} 
\usepackage{animate}
\usepackage{algorithm}
\usepackage{algpseudocode}
\usepackage{pdfpages}
\usepackage[paper=a4paper,left=30mm,right=30mm,top=25.4mm,bottom=25.4mm]{geometry}


\newcommand{\easysum}[2]{\ensuremath{\underset{#1}{\overset{#2}{\sum}}}}
\newcommand{\es}[2]{\ensuremath{\underset{#1}{\overset{#2}{\sum}}}}
\newcount\colveccount
\newcommand*\colvec[1]{
        \global\colveccount#1
        \begin{pmatrix}
        \colvecnext
}
\setlength\parindent{0pt}

\def\colvecnext#1{
        #1
        \global\advance\colveccount-1
        \ifnum\colveccount>0
                \\
                \expandafter\colvecnext
        \else
                \end{pmatrix}
        \fi
}


\newtheorem{theorem}{Theorem}[section]
\newtheorem{lemma}[theorem]{Lemma}
\newtheorem{proposition}[theorem]{Proposition}
\newtheorem{corollary}[theorem]{Corollary}
\newtheorem{definition}[theorem]{Definition}
\newtheorem{remark}[theorem]{Remark}

\newcommand{\msrX}{\mathcal{M}_+(\mathcal{X})}
\newcommand{\msrY}{\mathcal{M}_+(\mathcal{Y})}
\newcommand{\X}{\mathcal{X}}
\newcommand{\Y}{\mathcal{Y}}
\newcommand{\BS}{\mathcal{B}}
\newcommand{\dum}{\mathfrak{d}}


\DeclareMathOperator*{\argmin}{arg\,min}
\DeclareMathOperator*{\ty}{\tilde{\mathcal{Y}}}
\DeclareMathOperator*{\esn}{\es{i=1}{N}}
\newcommand{\uamin}[1]{\ensuremath{\underset{#1}{\argmin}}} 
\definecolor{mygreen}{rgb}{0,0,0}
\definecolor{myred}{rgb}{0,0,0}


\newcommand{\footremember}[2]{%
	\footnote{#2}
	\newcounter{#1}
	\setcounter{#1}{\value{footnote}}%
}
\newcommand{\footrecall}[1]{%
	\footnotemark[\value{#1}]%
}

\author{Florian Heinemann \footremember{ims}{\scriptsize Institute for Mathematical
		Stochastics, University of G\"ottingen,
		Goldschmidtstra{\ss}e 7, 37077 G\"ottingen}
		\and
		Marcel Klatt \footrecall{ims}
		\and
	Axel Munk \footrecall{ims} \footnote{\scriptsize Max Planck Institute for Biophysical
		Chemistry, Am Fa{\ss}berg 11, 37077 G\"ottingen} \footnote{ \scriptsize University Medical Center Göttingen, Cluster of Excellence 2067 Multiscale Bioimaging - From molecular machines to networks of excitable cells}}

	\title{Kantorovich-Rubinstein distance and barycenter for finitely supported measures: Foundations and Algorithms}

\graphicspath{{./graphics/}}
\begin{document}

\maketitle

\begin{abstract}
    The purpose of this paper is to provide a systematic discussion of a generalized barycenter based on a variant of unbalanced optimal transport (UOT) that defines a distance between general non-negative, finitely supported measures by allowing for mass creation and destruction modeled by some cost parameter. They are denoted as Kantorovich-Rubinstein (KR) barycenter and distance. In particular, we detail the influence of the cost parameter to structural properties of the KR barycenter and the KR distance. For the latter we highlight a closed form solution on ultra-metric trees. The support of such KR barycenters of finitely supported measures turns out to be finite in general and its structure to be explicitly specified by the support of the input measures. Additionally, we prove the existence of sparse KR barycenters and discuss potential computational approaches. The performance of the KR barycenter is compared to the OT barycenter on a multitude of synthetic datasets. We also consider barycenters based on the recently introduced Gaussian Hellinger-Kantorovich and Wasserstein-Fisher-Rao distances.    
\end{abstract}

\section{Introduction}
Over the past decade, optimal transport (OT) based concepts for data a\-na\-ly\-sis \citep[for a thorough treatment of the mathematical foundations of optimal transport see e.g.][]{rachev1998mass,villani2008optimal,santambrogio2015optimal} have seen increasing popularity. This is mainly due to the fact that OT based methods respect important features of the data's geometric structure. Furthermore, noteworthy advances have been achieved in various areas, such as optimisation \citep{bertsimas1997introduction,wolsey1999integer,grotschel2012geometric}, machine learning \citep{frogner2015learning,peyre2019computational,xie2020fast}, computer vision \citep{gangbo2000shape,su2015shape,solomon2015convolutional} and statistical inference \citep{sommerfeld2018inference,panaretos2020invitation,hallin2021multivariate}, among others. This methodological and computational progress recently also paved the way to novel areas of applications including genetics \citep{evans2012phylogenetic,schiebinger2019optimal} and cell biology \citep{gellert2019substrate,klatt2020empirical,tameling2021colocalization,wang2021revisiting}, to cite but a few. Of particular importance from a data analysis point of view are extensions to compare more than two measures, a prominent proposal being the Fr\'echet mean \citep{frechet1948elements}, in the present context known as \emph{Wasserstein barycenter} \citep{agueh2011barycenters}. Wasserstein barycenters allow for a notion of average on the space of probability measures, which is well-adapted to the geometry of the data \citep{alvarez2016fixed,anderes2016discrete}. With recent progress on their computation \citep{cuturi2014fast,carlier2015numerical,bonneel2015sliced,kroshnin2019complexity,ge2019interior,heinemann2022randomized} they establish themselves even further as a promising tool in many fields of data analysis, such as texture mixing \citep{rabin2011wasserstein}, distributional clustering \citep{ye2017fast}, histogram regression \citep{bonneel2016wasserstein}, domain adaptation \citep{montesuma2021wasserstein} and unsupervised learning \citep{schmitz2018wasserstein}, among others.\\
However, a well known drawback of the Wasserstein distance and its barycenters in various applications is their limitation to measures with equal total mass. In fact, in many real world instances the difference in total mass intensity is of crucial importance. Employing vanilla Wasserstein based tools on general positive measures necessitates the usage of a normalisation procedure to enforce mass equality between the measures. This approach is, by design, oblivious to the mass differences between the original measures and can limit its use in applications. Exemplary, we mention that normalisation destroys stoichiometric features in the analysis of protein interaction and pathways as pointed out in \cite{tameling2021colocalization}. Overall, this might lead to incorrect conclusions on specific applications. An illustrative example is given in \Cref{fig:splitmatch}.

\begin{figure}
\centering
\includegraphics[width=\textwidth]{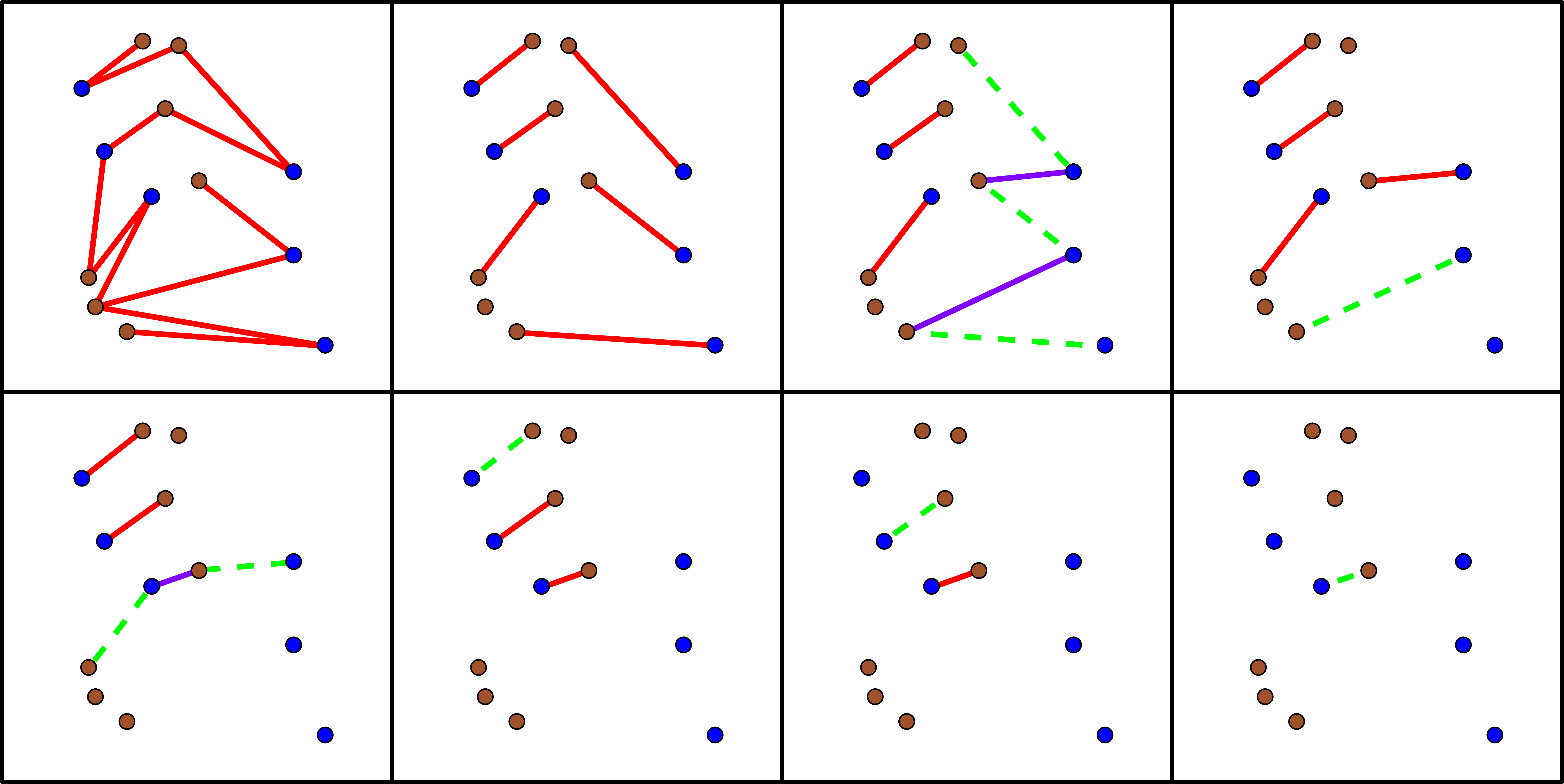}
  \caption{(Unbalanced) OT between two measures (support in blue and brown, respectively) with weights equal to one at each support point. \textbf{Top-Left:} OT plan (red) between normalised versions of the two measures. \textbf{Rest:} UOT plans (red/purple) between non-normalised measures. From top-left to bottom-right $C$ is decreasing. The edges, which have been removed most recently due to the reduction of $C$, are shown in green. Edges which have been added to the UOT graph due to the most recent reduction of $C$ are marked in purple.} \label{fig:splitmatch}
\end{figure}

\subsection{Prior Work}
The limitation of OT based concepts dealing only with measures of equal total mass has opened a wealth of approaches to account for more general measures. As an early proposal of this idea, the \emph{partial OT formulation} \citep{caffarelli2010free,figalli2010optimal} suggests to fix the total mass of the OT plan in advance, while relaxing the marginal constraints. Comparably more recent are \emph{entropy transport formulations}\footnote{Critically, this is not be confused with entropy \emph{regularized} optimal transport, which is a popular computational approach adding an entropy penalty term to the OT problem to allow for efficient, approximate computations \citep{cuturi2013sinkhorn,benamou2015iterative,carlier2017convergence}}. This general framework removes the marginal constraints and instead uses a divergence functional to measure the deviation between the transport marginals and the input measures. The entropy transport framework encompasses the \textit{Hellinger-Kantorovich} distance \citep{liero2018optimal,chizat2018scaling}, also known as \textit{Wasserstein-Fisher-Rao} distance \citep{chizat2018interpolating} and the \textit{Gaussian Hellinger-Kantorovich} distance \citep{liero2018optimal}. Inherent to all of these models is their dependency on parameters whose exact influence on the models' properties is generally not well understood. An alternative idea is based on extending the well-studied dynamic formulation of OT \citep{benamou2000computational} to measures with different total masses. With a focus on its geodesic properties, this approach has been studied in several works \citep{chizat2018interpolating,chizat2018unbalanced,gangbo2019unnormalized}. \\
In this paper, we rely on a simple and intuitive idea based on the seminal work of \cite{kantorovich1958space}. This accounts for mass construction and deletion at a cost modeled by some prespecified parameter \citep[for details see also][]{hanin1992kantorovich,guittet2002extended}. It leads to the \emph{Kantorovich-Rubinstein distance (KRD)} which curiously has been revisited several times under different names by various authors. For $p=1$, it has been referred to as Earth Mover's Distance \citep{pele2008linear}, and generalized Wasserstein distance \citep{piccoli2014generalized}, while for general $p\geq 1$ common terminology includes Kantorovich distance \citep{gramfort2015fast}, generalized KRD \citep{sato2020fast}, transport-trans\-form metric \citep{muller2020metrics} and robust optimal transport distance \citep{mukherjee2021outlier}.\\

\subsection{Contributions}
In this work, we define barycenters with respect to the KRD and investigate their fundamental properties from a data analysis point of view. This extends the popular notion of Wasserstein barycenters to unbalanced barycenters (UBCs), i.e., barycenters of measures of different total masses. Similary, UBCs have been considered explicitly for the Hellinger-Kantorovich distance \citep{chung2020barycenters,friesecke2021barycenters} and for the partial OT distance for absolutely continuous measures \citep{kitagawa2015multi}. Notably, the well-known approach of matrix scaling algorithms has been shown to provide a general framework to approximate any UBC based on entropy optimal transport \citep{chizat2018scaling} of finitely supported measures. Closely related to our approach is the work by \cite{muller2020metrics} approximating the KR barycenter in the special case of point patterns.\\
\textbf{The KR distance:} Let $(\X,d)$ be a finite metric space, where $\X =\{ x_1,\dots ,x_N \}$ and
\begin{align*}
\msrX \coloneqq \left\lbrace \mu \in \mathbb{R}^{\vert\X\vert} \,\mid\, \mu(x)\geq 0\, \forall x \in \X \right\rbrace
\end{align*}
is the set of non-negative measures\footnote{A non-negative measure on a finite space $\X$ is uniquely characterized by the values it assigns to each singleton $\{ x\}$. To ease notation we write $\mu(x)$ instead of $\mu(\{x\})$. The corresponding $\sigma$-field is always to be understood as the powerset of $\mathcal{X}$.} on $\X$. For a measure $\mu\in\msrX$ its total mass is defined as $\mathbb{M}(\mu)\coloneqq \sum_{x\in\X}\mu(x)$ and the subset of non-negative measures with total mass equal to one is the set of probability measures $\mathcal{P}(\X)$. If $\pi\in \mathcal{M}_+(\X\times\X)$ is a measure on the product space $\X \times \X$ its marginals are defined as $\pi(x,\X)\coloneqq \sum_{x^\prime\in\X} \pi(x,x^\prime)$ and $\pi(\X,x^\prime)\coloneqq \sum_{x \in \X}\pi(x,x^\prime)$, respectively. For two measures $\mu,\nu\in\msrX$ we define the set of \emph{non-negative sub-couplings} as
\begin{align}\label{eq:subcouplings}
\begin{split}
    \Pi_{\leq}(\mu,\nu)\coloneqq \lbrace \pi\in \mathcal{M}_+(\X\times\X) \mid \,&\pi(x,\X) \leq  \mu(x),\, \\ &\pi(\X,x^\prime)  \leq  \nu(x^\prime)\, \forall \, x,x^\prime\in\mathcal{X}  \rbrace.
\end{split}
\end{align}
Similarly, we denote the set of \emph{couplings} between $\mu$ and $\nu$ as $\Pi_=(\mu,\nu)$, where the inequality constraints in (\ref{eq:subcouplings}) are replaced by equalities.
 For $p\geq 1$ and a parameter $C>0$, \emph{unbalanced optimal transport} (UOT) between two measures $\mu,\nu\in\msrX$ is defined as
\begin{align}\label{eq:UOT}
\begin{split}
\text{UOT}_{p,C}(\mu,\nu)\coloneqq 
\min_{\pi\in \Pi_{\leq}(\mu,\nu)} &\sum\limits_{x,x^\prime \in \X}d^p(x,x^\prime)\pi(x,x^\prime)\\&+C^p\left(\frac{\mathbb{M}(\mu)+\mathbb{M}(\nu)}{2}-\mathbb{M}(\pi)\right). 
\end{split}
\end{align}
\begin{figure}
\centering
    \includegraphics[width=\textwidth]{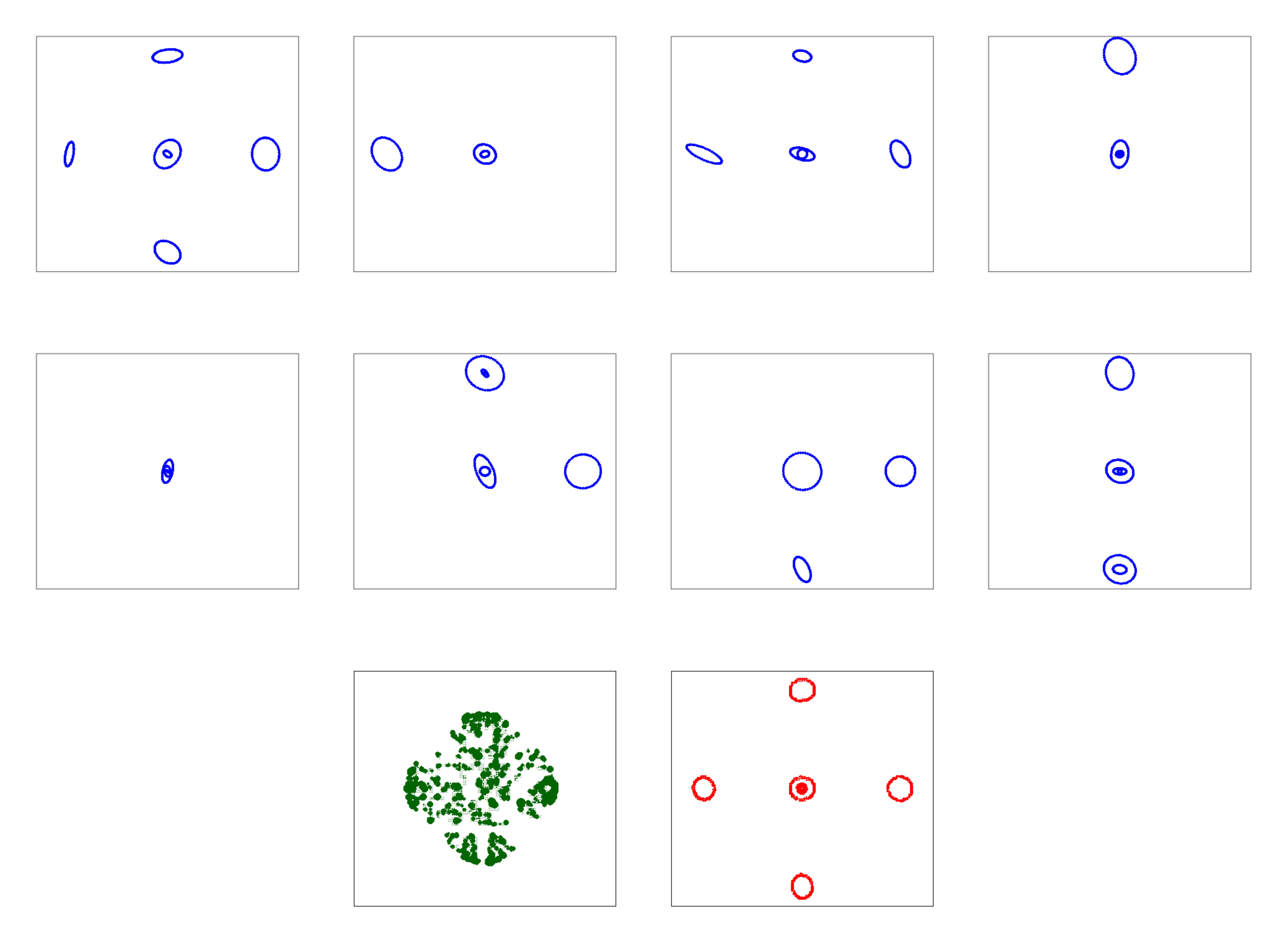}
\caption{\textbf{Upper two rows:} An excerpt of eight instances of a dataset of $N=100$ nested ellipses at up to $5$ different clusters in $[0,1]^2$. The number of ellipses in each cluster follows a Poisson distribution. For the cluster in the center the intensity is $2$ and for the four outer clusters the intensity is $1$. Each ellipse is discretized into $50$ points with mass $1$ at each location. \textcolor{myred}{For details on the computational methods refer to \Cref{sec:ComputationsSimulations}}. \textbf{Bottom-Left:} The Wasserstein barycenter of the normalized versions of these measures \textcolor{myred}{(runtime about 15 hours)}. \textbf{Bottom-Right:} The $(2,0.2)$-barycenter of these measures \textcolor{myred}{(runtime about 30 minutes)}. The $(2,C)$-barycenter for different values of $C$ can be seen in \Cref{fig:cluster_ellipses_ccurve}. \label{fig:cluster_ellipses}}
\end{figure}

Notably, $\text{UOT}_{p,C}(\mu,\nu)$ is finite for all measures $\mu,\nu\in\msrX$ with possibly different total masses and a solution of (\ref{eq:UOT}) always exists. Here, the parameter $C$ penalizes deviation of mass from the marginals of $\pi$ with respect to the input measures $\mu,\nu\in\msrX$. In particular and unlike the (balanced) OT problem
\begin{align*}
    \text{OT}_p(\mu,\nu)\coloneqq\min_{\Pi_{=}(\mu,\nu)} \sum\limits_{x,x^\prime \in \X}d^p(x,x^\prime)\pi(x,x^\prime) 
\end{align*}
defined only for measures $\mu,\nu\in\msrX$ with equal total mass $\mathbb{M}(\mu)=\mathbb{M}(\nu)$, UOT in \eqref{eq:UOT} relaxes the marginal constraint and allows optimal solutions to have more flexible marginals. Based upon UOT we define the $p$-th order \emph{Kanto\-rovich-Rubinstein distance} between two measures $\mu,\nu\in\msrX$ as
\begin{align}\label{eq:KRdistance}
\text{KR}_{p,C}(\mu,\nu)\coloneqq \left( \text{UOT}_{p,C}(\mu,\nu)\right)^{\nicefrac{1}{p}}.
\end{align}
For any $p\geq 1$, it defines a distance on the space of non-negative measures $\msrX$ and it is an extension of the well-known \emph{$p$-Wasserstein distance} $W_p(\mu,\nu)\coloneqq \left(\text{OT}_p(\mu,\nu)\right)^{\nicefrac{1}{p}}$ defined only for measures of equal total mass. Indeed, the KRD is shown to interpolate in-between \emph{OT on small scales} and \emph{point-wise comparisons on large scales} (\Cref{prop:KRdistance}) relative to the parameter $C$. This allows for an intuitive interpretation of the KRD. More precisely, in \Cref{lem:transportC}, we detail a clear geometrical connection between the value of $C$ and the structure of the UOT. In particular, this contrasts the closely related partial OT problem \citep{figalli2010optimal} mentioned above. Employing Lagrange multipliers one can see that for any choice of $C$, there exists a fixed mass $m$ of the partial OT problem, such that these two problems are equivalent. However, finding this value of $m$ requires to solve the UOT problem. We stress that the influence of $m$ on the resulting transport is in general hard to determine, while the impact of $C$ is intuitively clear. Thus, this perspective seems better suited to many applications.
For the specific case of measures supported on ultrametric trees (\Cref{subsec:trees}) we prove (\Cref{thm:KRultrametric}) an analogue of the well-known closed formula for the $p$-Wasserstein distance \citep{kloeckner2015geometric}. Additionally, the computation of the KRD is known to be equivalent to solving a related balanced OT problem \citep{guittet2002extended}, allowing to apply any state-of-the-art solver with minimal modifications to compute the KRD and plan.\\

\textbf{The KR barycenter:} The KRD also lends itself to define a notion of a barycenter for a collection of measures as a generalization of the $p$-Wasserstein barycenter defined for probability measures $\mu_1,\ldots,\mu_J\in\mathcal{P}(\X)$ as 
 \begin{align}\label{eq:otbarycenter}
\tilde{\mu}\in \argmin_{\mu\in\mathcal{P}(\Y)} \frac{1}{J}\sum_{i=1}^J W_p^p(\mu,\mu_i).
\end{align} 
Here, $(\X,d)$ is assumed to be embedded in some ambient space $(\Y,d)$, e.g., an Euclidean space with $\X \subset \Y$. The distance $d$ on $\X$ is understood to be the distance on $\Y$ restricted to $\X$. For $\mu_1,\dots,\mu_J\in \msrX$, any measure
\begin{equation}\label{eq:KRbarycenter}
    \mu^\star\in \argmin_{\mu\in\mathcal{M}_+(\Y)} F_{p,C}(\mu)\coloneqq\frac{1}{J}\easysum{i=1}{J}\text{KR}_{p,C}^p(\mu_i,\mu)
\end{equation}
is said to be a \emph{$(p,C)$-Kantorovich-Rubinstein barycenter} or \emph{$(p,C)$-barycenter} for short\footnote{For the sake of readability, the weights in this definition are fixed to $1/J$, though it is easy to adapt all instances of their occurrence in this work to arbitrary positive weights $\lambda_1,\dots ,\lambda_J$, summing to $1$.}. We refer to the objective functional $F_{p,C}$ as (unbalanced) \emph{$(p,C)$-Fr\'echet functional}. 
Notably, $(p,C)$-barycenters' support is not restricted to the finite space $\X$ which raises fundamental questions on its structural properties. In the following, we establish that there exists a finite set containing the support of any $(p,C)$-barycenter (\Cref{subsec:bary}). Indeed, this set can be explicitly constructed from the support of the individual $\mu_i$'s, but its size grows exponentially in the number of individual measures. However, we prove that there always exists a \emph{sparse} $(p,C)$-barycenter whose support size is at most linear in the number of measures (\Cref{thm:bary_prop}). We note that these properties are analogs of well-known properties of Wasserstein barycenters \citep{anderes2016discrete}, that we re-establish for the unbalanced setting. \\
Comparably, employing more general entropy transport distances, we are not aware of any similar structural description of their barycenters in terms of the input measures and the parameter. Notably, the entropy optimal transport barycenter of dirac measures is not necessarily finitely supported itself \citep[for an example see][]{friesecke2021barycenters}. In contrast, our explicit structural description of the support of KR barycenters provides an immediate understanding of its properties for a given choice of $C$. This clear link between $C$ and the $(p,C)$-barycenter also allows to incorporate previous knowledge of the measures or the ground space into the choice $C$. The $(p,C)$-barycenter can be tuned to be more flexible and provide superior performance compared to its $p$-Wasserstein counterpart by avoiding to normalise each measure. An illustrative example is included in \Cref{fig:cluster_ellipses}, where the $(p,C)$-barycenter detects all clusters correctly, while the Wasserstein barycenter does not provide any structural information on the underlying measures. This showcases potentially superior robustness and flexibility of the $(p,C)$-barycenter compared to the Wasserstein barycenter. We study this comparison in more detail on multiple synthetic data sets in \Cref{sec:ComputationsSimulations}. Here, the computational results\footnote{An implementation can be found in the R package \emph{WSGeometry} on CRAN.} are based on the fact that, due to our structural analysis of the support of the $(p,C)$-barycenter, it is straightforward to modify any given state-of-the-art solver for the Wasserstein barycenter problem to solve the $(p,C)$-barycenter problem (\Cref{subsec:algos}). \\

\section{Kantorovich-Rubinstein Distance and $\mathbf{(p,C)}$-Barycenter}\label{sec:Theory}
In this section, we provide some theoretical analysis of the structural properties inherent in the UOT in \eqref{eq:UOT} and as a consequence to the KRD in \eqref{eq:KRdistance}. We also focus on the variational formulation defining the $(p,C)$-barycenter in \eqref{eq:KRbarycenter}. 

\subsection{KR Distance}

In this subsection, we focus on structural properties of minimizers for UOT in \eqref{eq:UOT} and their consequences for the KRD. Notably, one can equivalently restate the penalization of total mass in (\ref{eq:UOT}) as 
\begin{equation} \label{eq:alternativepenalization}
\begin{split}
        &C\left(\frac{\mathbb{M}(\mu)+\mathbb{M}(\nu)}{2}-\mathbb{M}(\pi)\right)\\=&\frac{C}{2}\left(\sum\limits_{x\in \X}\left(\mu(x)-\pi(x,\X)\right)+\sum\limits_{x^\prime\in \X}\left(\nu(x^\prime)-\pi(\X,x^\prime)\right)\right).
\end{split}
\end{equation}
While in \eqref{eq:UOT} the parameter $C>0$ controls the deviation of the total mass of $\pi$, the alternative representation \eqref{eq:alternativepenalization} demonstrates its marginal characterization. Indeed, the parameter $C$ specifies the maximal distance (scale) for which transportation is cheaper than creation or destruction of mass. More precisely, each optimal solution $\pi_C$ for \eqref{eq:UOT} induces a directed transportation graph $G(\pi_C)$ between the support points of $\mu$ (source points) and the support points of $\nu$ (sink points). By definition, the graph $G(\pi_C)$ contains a directed edge $(x,x^\prime)$ if and only if $\pi_C(x,x^\prime)>0$. For a directed path $P=(x_{i_1},\ldots,x_{i_k})$ in $G(\pi_C)$ its path length is defined as $\mathcal{L}(P)=\sum_{j=1}^{k-1} d^p(x_{i_j},x_{i_{j-1}})$. The parameter $C>0$ determines the maximal path length for any path in $G(\pi_C)$ as the following statement demonstrates.

\begin{lemma}\label{lem:transportC}
    For $p\geq 1$, parameter $C>0$ and measures $\mu,\nu\in\msrX$ consider the UOT \eqref{eq:UOT} with an optimal solution $\pi_C$. The length of any directed path $P$ from the corresponding transport graph $G(\pi_C)$ is bounded by
    \begin{align*}
    \mathcal{L}(P)\leq C^p.
    \end{align*}
    In particular, if $d(x,x^\prime)> C$ then for any optimal solution of \ref{eq:UOT} it holds $\pi_C(x,x^\prime)=0$.
\end{lemma}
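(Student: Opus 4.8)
The plan is to prove $\mathcal{L}(P)\le C^p$ by a local perturbation (exchange) argument: starting from an optimal $\pi_C$, I build a competitor $\pi'\in\Pi_{\le}(\mu,\nu)$ that peels a small amount of mass off the path $P$, and I read the inequality off from the fact that this cannot decrease the objective. The ``in particular'' claim is then simply the single-edge instance of the bound.

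Concretely, write the (simple) path as $P=(x_{i_1},\dots,x_{i_k})$ with $\pi_C(x_{i_j},x_{i_{j+1}})>0$ for $j=1,\dots,k-1$, and set $\delta\coloneqq\min_{1\le j\le k-1}\pi_C(x_{i_j},x_{i_{j+1}})>0$. Define $\pi'$ by subtracting $\delta$ from every path edge $\pi_C(x_{i_j},x_{i_{j+1}})$ and, at each interior vertex $x_{i_j}$ with $2\le j\le k-1$, adding $\delta$ to the diagonal entry $\pi'(x_{i_j},x_{i_j})$. The role of this diagonal (self-loop) correction is to restore \emph{both} marginals at every interior vertex: the freed outgoing mass $\delta$ and the freed incoming mass $\delta$ are reabsorbed at zero cost, since $d^p(x_{i_j},x_{i_j})=0$. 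Hence $\pi'(x_{i_j},\X)=\pi_C(x_{i_j},\X)$ and $\pi'(\X,x_{i_j})=\pi_C(\X,x_{i_j})$ for all interior vertices, so the constraints there remain satisfied and their contribution to the penalty is unchanged. At the two endpoints only one marginal changes, namely $\pi'(x_{i_1},\X)=\pi_C(x_{i_1},\X)-\delta$ and $\pi'(\X,x_{i_k})=\pi_C(\X,x_{i_k})-\delta$; these merely become more slack, so $\pi'\in\Pi_{\le}(\mu,\nu)$.

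It remains to compare objective values; write $J$ for the functional minimized in \eqref{eq:UOT}. The transport term drops by $\delta\,\mathcal{L}(P)$ (the removed path mass) and is left unchanged by the zero-cost diagonal additions. Using the marginal form of the penalty in \eqref{eq:alternativepenalization}, the only change in the penalty comes from the two endpoints, each shedding $\delta$ of matched mass, for a total increase of $\tfrac{C^p}{2}\delta+\tfrac{C^p}{2}\delta=C^p\delta$. Therefore
\begin{align*}
J(\pi')-J(\pi_C)=\delta\bigl(C^p-\mathcal{L}(P)\bigr).
\end{align*}
Since $\pi_C$ is optimal this difference is nonnegative and $\delta>0$, giving $\mathcal{L}(P)\le C^p$. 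Taking $k=2$ (a single edge $(x,x^\prime)$) yields $d^p(x,x^\prime)\le C^p$ for every edge of $G(\pi_C)$; if $d(x,x^\prime)>C$ the displayed difference is strictly negative, contradicting optimality, so $\pi_C(x,x^\prime)=0$.

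The step I expect to be the crux is the treatment of interior vertices. A naive perturbation that deletes the path and pays for the freed mass by destruction/creation at every vertex would cost $\tfrac{C^p}{2}$ per exposed marginal at each of the $2(k-1)$ exposed marginals, yielding only the far weaker bound $\mathcal{L}(P)\le(k-1)C^p$. The sharpening to the constant $C^p$ hinges on recognizing that at interior vertices the freed incoming and outgoing mass can be matched to \emph{each other} for free through the diagonal entry, so that destruction/creation is paid only once, at the two ends of the path. The remaining points to verify are routine: $\delta>0$ is immediate from the definition of $G(\pi_C)$, the marginal feasibility of $\pi'$ at the endpoints follows since $\pi_C(x_{i_1},\X)\ge\pi_C(x_{i_1},x_{i_2})\ge\delta$, and the restriction to a simple path is harmless because a non-simple path contains a cycle of nonnegative length carrying removable mass, which can be excised first without increasing $J$.
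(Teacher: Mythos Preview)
Your argument is correct and is essentially the same perturbation approach as the paper's: subtract a small amount $\epsilon$ (your $\delta$) along every edge of the path and read off the inequality from optimality. The refinement you add and correctly flag as the crux---the diagonal self-loop correction at interior vertices---is in fact needed: the paper's written proof omits it and asserts that the penalty increases by $\epsilon C^p$, but with its bare construction the total mass $\mathbb{M}(\tilde\pi_C)$ drops by $(k-1)\epsilon$, so the penalty actually rises by $(k-1)\epsilon C^p$ and only the weaker bound $\mathcal{L}(P)\le (k-1)C^p$ would follow; your self-loop trick restores the interior marginals at zero cost and is precisely what makes the stated constant $C^p$ come out for paths with more than one edge.
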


A proof is included in \Cref{sec:proofs}. \Cref{lem:transportC} shows that the underlying transportation graph has maximal path length $C^p$ which limits the interaction between source and sink points. It will be of crucial importance for closed formulas on ultra-metric trees in the following subsection. As an immediate consequence we obtain some important statements on the KRD in \eqref{eq:KRdistance} along with its metric property.

\begin{theorem}\label{prop:KRdistance}
    For any $p\geq 1$ and parameter $C>0$ the following statements hold:
    \begin{itemize}
        \item [(i)] The $p$-th order KRD in \eqref{eq:KRdistance} defines a metric on the space of non-negative measures $\msrX$.
        \item[(ii)] If $C\leq \min_{x\neq x^\prime} d(x,x^\prime)$, then it holds that
        \begin{align*}
        \text{KR}_{p,C}^p(\mu,\nu)=\frac{C^p}{2}\text{TV}(\mu,\nu),
        \end{align*}
        where $TV(\mu,\nu)\coloneqq\nicefrac{1}{2}\sum_{x\in \X}\lvert \mu(x)-\nu(x) \rvert $ is the total variation distance. The same equality holds for all $C>0$ if $\mu(x)\geq \nu(x)$ for all $x\in \X$ or if $\mu(x)\leq \nu(x)$ for all $x\in \X$.
        \item[(iii)] If $C\geq\max_{x,x^\prime}d(x,x^\prime)$ and $\mathbb{M}(\mu)=\mathbb{M}(\nu)$, then it holds that
        \begin{align*}
        \text{KR}_{p,C}^p(\mu,\nu)=W_p^p(\mu,\nu).
        \end{align*}
        \item[(iv)] If $C_1\leq C_2$, then it holds 
        \begin{align*}
            \text{KR}_{p,C_1}^p(\mu,\nu)\leq \text{KR}_{p,C_2}^p(\mu,\nu).
        \end{align*}
    \end{itemize}
\end{theorem}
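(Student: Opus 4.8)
I would carry out the four claims in order of increasing difficulty, finishing with the triangle inequality in (i), which is the only substantial point. Claim (iv) is immediate: for each fixed $\pi\in\Pi_{\leq}(\mu,\nu)$ the deficit $\frac{\mathbb{M}(\mu)+\mathbb{M}(\nu)}{2}-\mathbb{M}(\pi)$ is nonnegative, since summing the sub-coupling constraints gives $\mathbb{M}(\pi)\leq\min(\mathbb{M}(\mu),\mathbb{M}(\nu))$. Hence each $\pi$ contributes an affine, nondecreasing function of $C^p$ while the feasible set is $C$-independent; evaluating an optimizer for $C_2$ at $C_1\leq C_2$ then yields $\text{UOT}_{p,C_1}\leq\text{UOT}_{p,C_2}$, which is (iv).

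For the small-scale regime (ii) I would first argue that an optimal plan may be taken to be diagonal. When $C<\min_{x\neq x'}d(x,x')$ this is immediate from \Cref{lem:transportC}, which forces $\pi_C(x,x')=0$ off the diagonal; the boundary case $C=\min_{x\neq x'}d(x,x')$ is covered by noting that transport at distance exactly $C$ is cost-neutral against destruction-plus-creation (each costs $C^p$ per unit by \eqref{eq:alternativepenalization}), so a diagonal optimizer persists. On the diagonal the cost vanishes and increasing transported mass always lowers the penalty, forcing $\pi(x,x)=\min(\mu(x),\nu(x))$; substituting and using $\frac{\mu(x)+\nu(x)}{2}-\min(\mu(x),\nu(x))=\frac{1}{2}|\mu(x)-\nu(x)|$ collapses the objective to the stated multiple of $\text{TV}(\mu,\nu)$. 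Under $\mu(x)\geq\nu(x)$ (resp. $\leq$) pointwise no scale restriction is needed: every feasible plan has $\mathbb{M}(\pi)\leq\mathbb{M}(\nu)$ (resp. $\mathbb{M}(\mu)$) and nonnegative cost, so the diagonal plan $\pi(x,x)=\nu(x)$ (resp. $\mu(x)$) attains the resulting lower bound. For (iii), with $\mathbb{M}(\mu)=\mathbb{M}(\nu)$, restricting the minimisation to $\Pi_{=}(\mu,\nu)$ kills the deficit, giving $\text{UOT}_{p,C}\leq\text{OT}_p=W_p^p$; conversely any optimal sub-coupling can be completed to a full coupling by arbitrarily matching its equal-mass leftover marginals, each transported unit costing at most $(\max_{x,x'}d(x,x'))^p\leq C^p$, exactly the penalty it removes, so $W_p^p\leq\text{UOT}_{p,C}$ and equality follows.

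In (i), nonnegativity and finiteness are clear ($\pi=0$ is feasible and both summands are nonnegative), symmetry follows from the transposition bijection $\pi\mapsto\pi^{\top}$ between $\Pi_{\leq}(\mu,\nu)$ and $\Pi_{\leq}(\nu,\mu)$ leaving the symmetric objective invariant, and identity of indiscernibles holds because $\text{KR}_{p,C}(\mu,\nu)=0$ forces both summands to vanish, the deficit term then imposing $\mathbb{M}(\mu)=\mathbb{M}(\nu)$ and a full coupling, while the cost term (with $d^p(x,x')>0$ off-diagonal) forces diagonal support, whence $\mu=\nu$. The triangle inequality I would obtain by the reduction of the KRD to a balanced transport problem \citep{guittet2002extended}: adjoin a reservoir point $\partial$, set $\X_\partial=\X\cup\{\partial\}$, and define the truncated metric $\bar d(x,x')=\min(d(x,x'),2^{1-1/p}C)$ for $x,x'\in\X$ and $\bar d(x,\partial)=2^{-1/p}C$. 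Truncation of a metric is a metric and the cap is exactly what closes the triangles through $\partial$. For $\mu,\nu,\eta$ I fix the common mass $M=\mathbb{M}(\mu)+\mathbb{M}(\nu)+\mathbb{M}(\eta)$ and lift each measure by dumping the missing mass on $\partial$, e.g. $\tilde\mu=\mu+(M-\mathbb{M}(\mu))\delta_\partial$.

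The crux is the identity $\text{UOT}_{p,C}(\mu,\nu)=W_p^{\bar d}(\tilde\mu,\tilde\nu)^p$: a unit sent to or from $\partial$ costs $\bar d^p(\cdot,\partial)=C^p/2$, reproducing the creation/destruction cost in \eqref{eq:alternativepenalization}, and — the delicate point — the truncation of $\bar d$ on $\X\times\X$ must never help the balanced optimizer. This is precisely where \Cref{lem:transportC} does the work: any edge actually used has length at most $C^p\leq(2^{1-1/p}C)^p$, so the cap is inactive on used edges and routing through $\partial$ (total cost $C^p$) is never dearer than a forbidden long direct edge. One also checks the identity is insensitive to the surplus mass at $\partial$, which self-matches $\partial\mapsto\partial$ at zero cost; this is what lets all three pairs share the single mass $M$ and metric $\bar d$, so the ordinary $W_p$ triangle inequality transfers verbatim to $\text{KR}_{p,C}$. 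I expect establishing this metric identity uniformly in $p\geq1$ to be the main obstacle. A self-contained alternative avoids the embedding by gluing the two optimal sub-couplings along their unequal intermediate marginals $\nu_1=\pi_1(\X,\cdot)$ and $\nu_2=\pi_2(\cdot,\X)$ via $\gamma_y(x,z)=\pi_1(x,y)\pi_2(y,z)/\max(\nu_1(y),\nu_2(y))$, charging the mismatch $|\nu_1-\nu_2|$ as extra creation/destruction and closing with a Minkowski inequality as in the balanced case; I would keep this as a fallback, since the embedding confines all the $p$-dependent bookkeeping to the single identity.
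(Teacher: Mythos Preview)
Your proposal is correct throughout. For parts (i) and (ii) it coincides with the paper's argument: the triangle inequality is obtained by the Guittet lift to a balanced Wasserstein problem on an augmented space (the paper uses truncation at $C$ rather than your $2^{1-1/p}C$, but both caps make $\bar d$ a metric and are inactive on any edge an optimizer would use, so the identity $\text{UOT}_{p,C}=W_p^{\bar d}(\tilde\mu,\tilde\nu)^p$ holds either way); the TV formula in (ii) is derived identically via \Cref{lem:transportC} and the diagonal plan $\pi(x,x)=\mu(x)\wedge\nu(x)$.

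Where you genuinely diverge is in (iii) and (iv). For (iii) the paper argues via duality, observing that when $C\geq\max d(x,x')$ the box constraint $f,g\leq C^p/2$ in \eqref{eq:dualUOT} becomes redundant so the duals of UOT and OT coincide. Your primal completion argument---extending an optimal sub-coupling to a full coupling by matching the equal-mass leftovers at cost $\leq C^p$ per unit, exactly cancelling the removed penalty---is equally valid and avoids invoking duality altogether. For (iv) the paper passes through the lift and uses pointwise monotonicity of $\tilde d_{C}^p$ in $C$, whereas you work directly with the primal objective: the deficit term is nonnegative on every feasible $\pi$, so the objective is affine nondecreasing in $C^p$ at fixed $\pi$, and the feasible set is $C$-independent. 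Your routes for (iii) and (iv) are more self-contained (no duality, no augmented construction needed), while the paper's choices tie these statements to the same lift machinery that drives the rest of the article; neither approach dominates, but yours would be preferable in a presentation that did not otherwise need the augmentation.
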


We stress that the metric property of the KRD in \Cref{prop:KRdistance} (i) has already been established in specific instances, e.g., for $p=1$ \citep{piccoli2014generalized}. Our proof follows that of Theorem $2$ in \cite{muller2020metrics} for uniform measures on point patterns with minor modifications. \\
\Cref{prop:KRdistance} demonstrates how two measures $\mu,\nu\in\msrX$ are compared with respect to KRD. Depending on the parameter $C>0$ the optimal value interpolates between $p$-th order Wasserstein distance on small scales and total variation on larger scales with respect to $C$. 
Equivalently, these properties can be shown by considerations of the \emph{dual} program for UOT in (\ref{eq:UOT}) given by
\begin{align}\label{eq:dualUOT}
\text{UOT}_{p,C}(\mu,\nu)=\max_{\substack{f,g\colon \X\to \mathbb{R}\\ f\leq \nicefrac{C^p}{2},\, g \leq \nicefrac{C^p}{2}}}\, \sum_{x\in \X} f(x)&\mu(x) + \sum_{x^\prime \in \X} g(x^\prime)\nu(x^\prime)\tag{$\text{DUOT}_{p,C}$}  \\[1ex] 
\textrm{s.t. } f(x)+g(x^\prime)&\leq d^p(x,x^\prime),\, \forall x,x^\prime \in \X,\notag
\end{align}
where the equality holds due to \emph{strong duality}. For $p=1$ this can be further specified to 
\begin{align*}
\text{UOT}_{1,C}(\mu,\nu)=\max_{\substack{f\colon \X\to \mathbb{R}\\ f\, 1-\text{Lipschitz}\\
\Vert f \Vert_{\infty}\leq \nicefrac{C}{2}}} \sum_{x\in \X} f(x)(\mu(x)&-\nu(x))
\end{align*}
which reveals its relation to the \emph{flat metric} \citep{bogachev2007measure} as observed in \cite{lellmann2014imaging,schmitzer2019framework}. As in general $\mathbb{M}(\mu)\neq \mathbb{M}(\nu)$, the bound $f,g\leq C^p/2$ on dual feasible solutions $f,g$ is necessary for the dual to be finite. However, if the measures $\mu,\nu\in\msrX$ have equal total mass $\mathbb{M}(\mu)=\mathbb{M}(\nu)$ and $C\geq \max_{x,x^\prime} d(x,x^\prime)$, then the bound on dual feasible solutions is redundant and we obtain the dual of the usual OT problem
\begin{align}\label{eq:dualOT}
\text{OT}_{p}(\mu,\nu)=\max_{f,g\colon \X\to \mathbb{R}} \sum_{x\in \X} f(x)&\mu(x) +\sum_{x^\prime \in \X}g(x^\prime)\nu(x^\prime)\notag \tag{$\text{DOT}_{p}$}\\[1ex]
\textrm{s.t. }  f(x)+ g(x^\prime) &\leq d^p(x,x^\prime),\, \forall x,x^\prime \in \X.\notag
\end{align}
\subsubsection{KR Distance on Ultrametric Trees} \label{subsec:trees}
For OT, the approximations of the underlying distance by a tree metric are common tools for theoretical and practical purposes. The former is usually employed for rates of convergence for the expectation of empirical OT costs \citep{sommerfeld2019optimal} while in the latter tree approximations serve to reduce the computational complexity inherent in OT \citep{le2019tree}. \textcolor{mygreen}{OT on ultramatric trees is also applied for the analysis of phylogenetic trees \citep{gavryushkin2016space}.} For an efficient computational implementation of UOT on tree metrics we refer to \cite{sato2020fast}. Notably, while OT with tree metric costs has a closed form solution, this fails to hold for its UOT counterpart. An exception is given in terms of ultrametric trees for which not only OT \citep{kloeckner2015geometric} but also UOT admits a closed form solution, which we establish in this subsection. \\
To this end, consider a tree $\mathcal{T}$ with nodes $V$, edges $E$ attached with (non-negative) weights $w(e)$ for $e\in E$ and a designated root \textsf{r}. Two nodes $\textsf{v},\textsf{w}\in V$ are connected by a unique path denoted $\mathcal{P}(\textsf{v},\textsf{w})$ either represented by a sequence of nodes or as a sequence of edges. The distance $d_\mathcal{T}(\textsf{v},\textsf{w})$ is equal to the sum of the weights of those edges contained in $\mathcal{P}(\textsf{v},\textsf{w})$. A \emph{leaf} of $\mathcal{T}$ is any node such that its degree (number of edges attached to the node) is equal to one and the set of all leaf nodes is denoted as $L\subset V$. A node $\textsf{v}^\star$ is termed \emph{parent} of node $\textsf{v}$ denoted by $\text{par}(\textsf{v})=\textsf{v}^\star$ if both are connected by a single edge but $\textsf{v}^\star$ is closer to the root than $\textsf{v}$. The parent of the root node is set to $\text{par}(\textsf{r})=\textsf{r}$. For a node $\textsf{v}$ its \emph{children} are the elements of the set $\mathcal{C}(\textsf{v})=\left\lbrace \textsf{w}\in V \,\mid\, \textsf{v}\in \mathcal{P}(\textsf{w},\textsf{r})\right\rbrace$. Notice that with this definition $\textsf{v}$ is a child of itself (\Cref{fig:exampletree} (a) for an illustration). 
\begin{figure}
  \centering
  \subfloat[][]{\includegraphics[width=0.47\linewidth]{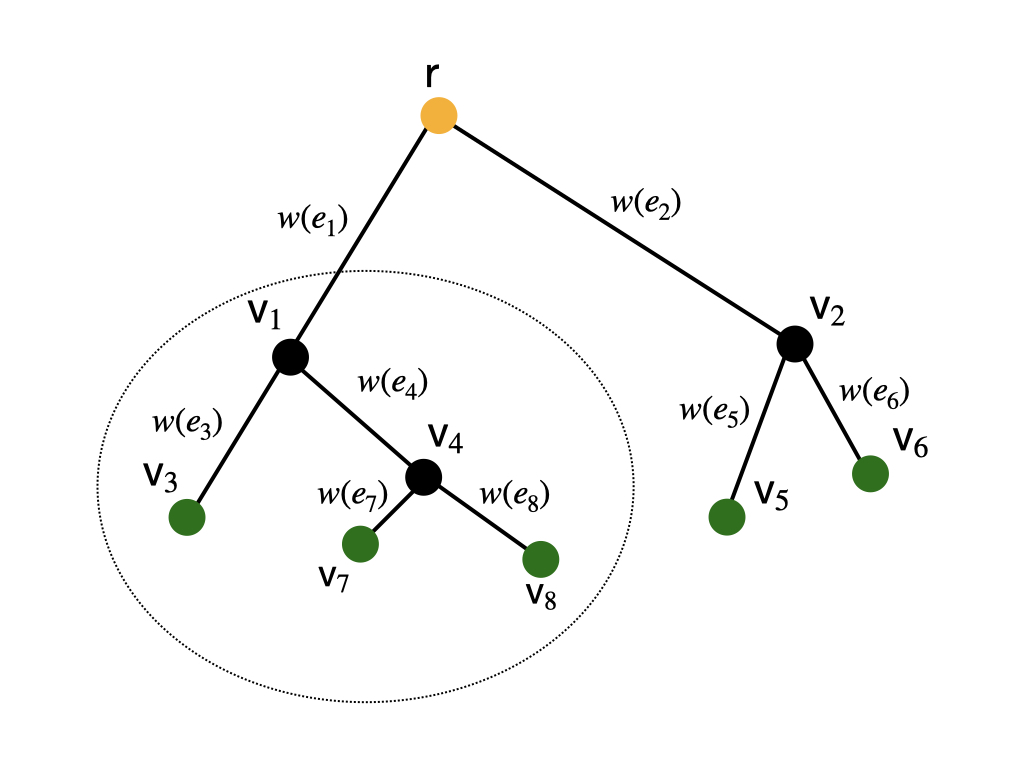}}%
  \qquad
  \subfloat[][]{\includegraphics[width=0.47\linewidth]{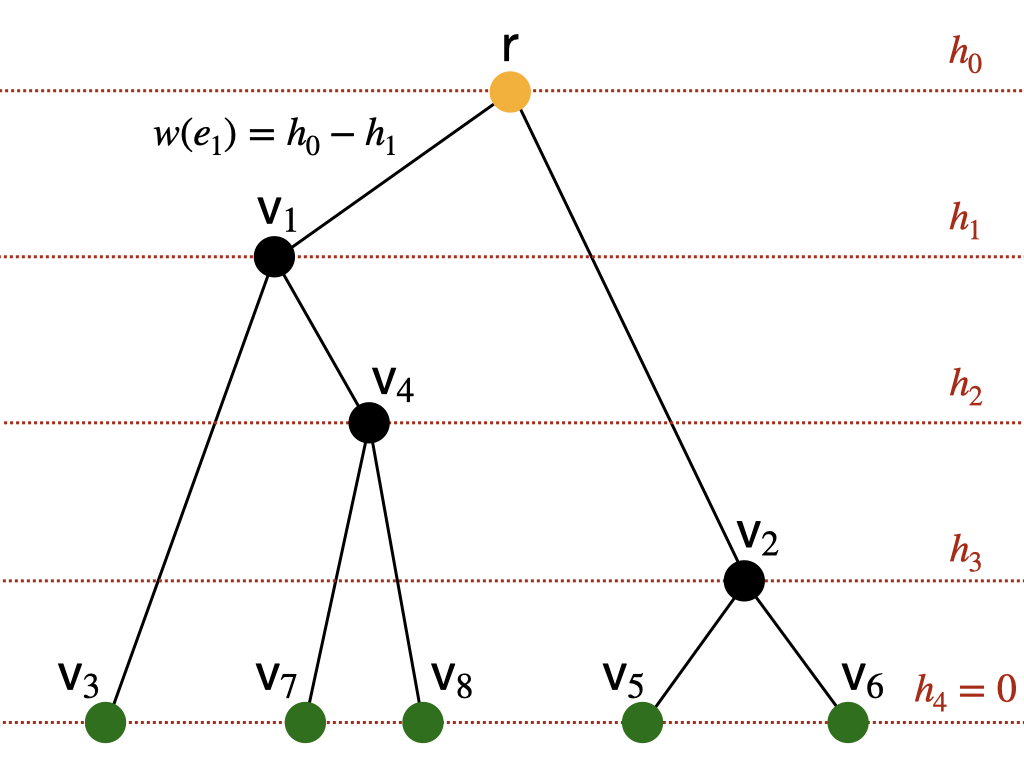}}%
  \caption{\textbf{General Tree Structures:} \textbf{(a)} A tree graph $\mathcal{T}$ with root \textsf{r} (orange), internal nodes (black) and leaf nodes $L$ (green). By definition $\text{par}(\textsf{v}_5)=\text{par}(\textsf{v}_{6})=\textsf{v}_2$ and the children of $\textsf{v}_1$ are equal $\mathcal{C}(\textsf{v}_1)=\{\textsf{v}_1,\textsf{v}_3,\textsf{v}_4,\textsf{v}_7,\textsf{v}_8\}$. The distance from each leaf node to the root may vary. \textbf{(b)} An ultrametric tree $\mathcal{T}$ with height function $h$ (red) such that $0=h_43<h_3<h_2<h_1<h_0$. Edge weights are defined by the the difference of consecutive height values, e.g. $w(e_1)=h_0-h_1$. Each leaf node (green) is at the same distance to the root \textsf{r} (orange).}%
  \label{fig:exampletree}
\end{figure}

A tree $\mathcal{T}$ is termed \emph{ultrametric tree} if all its leaf nodes are at the same distance to the root. Equivalently, there exists a \emph{height function} $h\colon V \to \mathbb{R}_+$ that is monotonically decreasing meaning that $h(\text{par}(\textsf{v}))\geq h(\textsf{v})$ and such that $h(\textsf{v})=0$ for $\textsf{v}\in L$. The distance is set to $d_\mathcal{T}(\textsf{v},\text{par}(\textsf{v}))=\left\vert h(\textsf{v})-h(\text{par}(\textsf{v}))\right\vert$ and extended on the full tree (\Cref{fig:exampletree} (b) for an illustration).\\
Consider an ultrametric tree $\mathcal{T}$ with height function $h$ and measures $\mu^L,\nu^L$ supported on the leaf nodes $L\subset V$. We prove that the $p$-th order KRD admits a \emph{closed formula} for such a setting. Intuitively, the parameter $C$ restricts transportation of mass up to a certain threshold allowing to decompose $\mathcal{T}$ into subtrees. Mass transportation is restricted solely within each subtree whereas mass abundance or deficiency is penalized with parameter $C$ for each particular subtree (\Cref{fig:explicitformula} for an illustration). We define the set
\begin{align}\label{eq:subtreeroot}
\mathcal{R}(C)\coloneqq \left\lbrace \textsf{v}\in V\, \mid \, h(\textsf{v})\leq \frac{C}{2}< h(\text{par}(\textsf{v}))\right\rbrace
\end{align} 
with the convention that $\mathcal{R}(C)=\lbrace \textsf{r} \rbrace$ if $\nicefrac{C}{2}\geq h(\textsf{r})$ and for a node $\textsf{v}\in V$ set
\begin{align*}
\mu^L(\mathcal{C}(\textsf{v}))\coloneqq \sum_{\textsf{w}\in\mathcal{C}(v)\textcolor{purple}{\cap L}} \mu^L(w).
\end{align*}
\begin{figure}
  \centering
  \subfloat[][]{\includegraphics[width=0.47\linewidth]{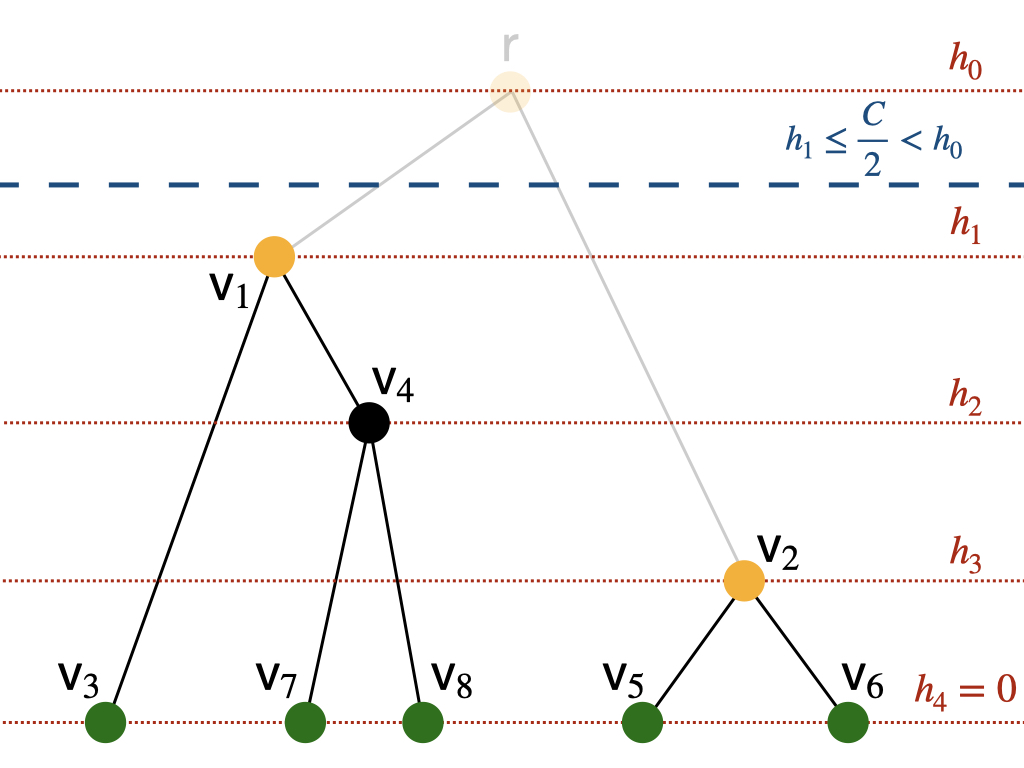}}%
  \qquad
  \subfloat[][]{\includegraphics[width=0.47\linewidth]{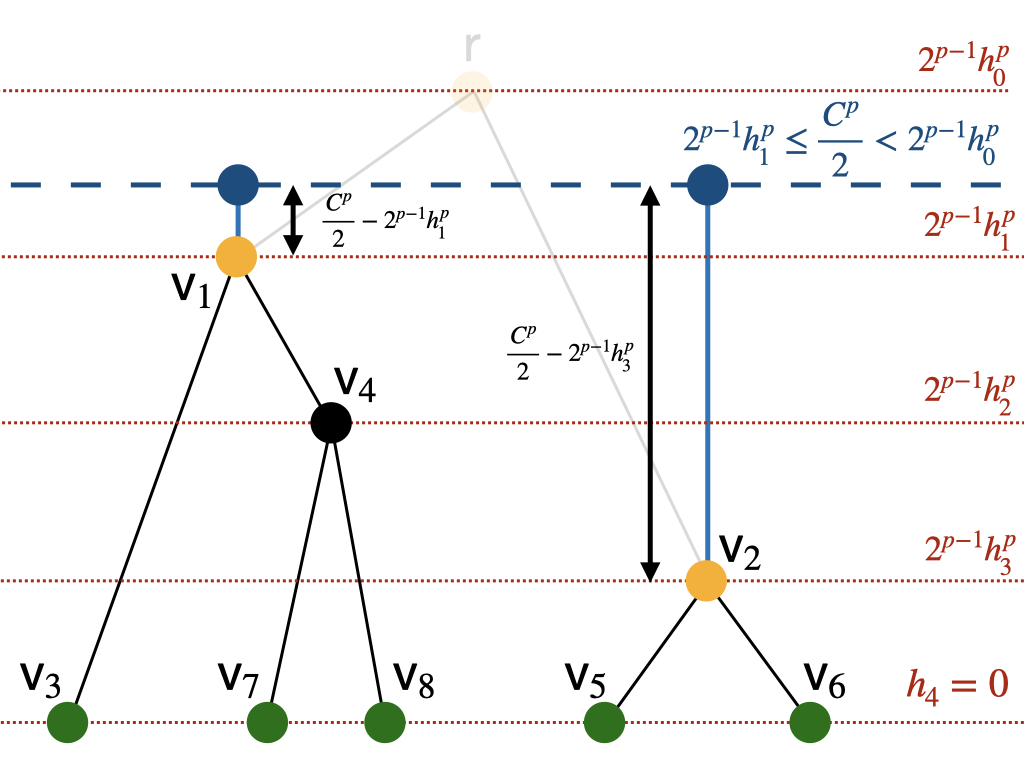}}%
  \caption{\textbf{Closed formula for the KRD on ultrametric trees:} \textbf{(a)} Depending on the regularization $C>0$ and the underlying height function $h$ the ultrametric tree $\mathcal{T}$ introduced in \Cref{fig:exampletree} (b) is decomposed into two subtrees. Each node in the set $\mathcal{R}(C)=\left\lbrace \textsf{v}_1,\textsf{v}_2\right\rbrace$ (orange) serves as a new root and corresponding subtrees $\mathcal{T}(\textsf{v}_1)\coloneqq \mathcal{C}(\textsf{v}_1)$ and $\mathcal{T}(\textsf{v}_2)\coloneqq \mathcal{C}(\textsf{v}_2)$ are equal their respective set of children with corresponding edges. \textbf{(b)} The $p$-th height transformation $\mathcal{T}_p(\textsf{v}_1)$ and $\mathcal{T}_p(\textsf{v}_2)$ of the induced subtrees $\mathcal{T}(\textsf{v}_1)$ and $\mathcal{T}(\textsf{v}_2)$, respectively. Each subtree is extended by a new root (blue) with an edge (lightblue) whose distance is equal the difference of regularization $\nicefrac{C^p}{2}$ and the $p$-th height transformed value of the former root.}%
  \label{fig:explicitformula}
\end{figure}

\begin{theorem}[KR on ultrametric trees]\label{thm:KRultrametric}
Consider an ultrametric tree $\mathcal{T}$ with leaf nodes $L$ and height function $h\colon V\to \mathbb{R}_+$ inducing the tree metric $d_\mathcal{T}$. For any $p\geq 1$ and two measures $\mu^L,\nu^L\in \mathcal{M}_+(L)$ supported on the leaf nodes of $\mathcal{T}$ it holds that
{\begin{align*}
&\text{KR}^p_{p,C}\left(\mu^L,\nu^L\right)=\\
 &\sum_{\textsf{v}\in \mathcal{R}(C)} \Bigg(2^{p-1}\sum_{\textsf{w}\in \mathcal{C}(\textsf{v})\setminus \lbrace \textsf{v} \rbrace}  \Big(\left( h(par(\textsf{w}))^p-h(\textsf{w})^p \right) \left\vert \mu^L(\mathcal{C}(\textsf{w}))-\nu^L(\mathcal{C}(\textsf{w}))\right\vert \Big) \\[1ex]
&+\left(\frac{C^p}{2}-2^{p-1}h(\textsf{v})^p\right) \left\vert \mu^L(\mathcal{C}(\textsf{v}))-\nu^L(\mathcal{C}(\textsf{v}))\right\vert \Bigg).
\end{align*}}
\end{theorem}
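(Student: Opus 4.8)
The plan is to combine the geometric restriction from \Cref{lem:transportC} with two reductions: first decoupling the problem over the subtrees induced by $\mathcal{R}(C)$, and then linearizing the $p$-th power cost on each subtree so that the classical closed formula for $W_1$ on a tree can be applied. The two conceptual ingredients are that membership in $\mathcal{R}(C)$ encodes exactly the scale at which transport is forbidden, and that the $p$-th power of an ultrametric distance is again an (ordinary) tree metric.

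First I would use \Cref{lem:transportC} to split the global UOT into independent problems on the subtrees rooted at $\mathcal{R}(C)$. For two leaves $x,x'$ with lowest common ancestor $\textsf{a}$ the ultrametric structure gives $d_\mathcal{T}(x,x')=2h(\textsf{a})$. Moving up from any leaf the height increases monotonically and crosses the level $C/2$ exactly once, at a unique node of $\mathcal{R}(C)$, so the sets $\mathcal{C}(\textsf{v})\cap L$ for $\textsf{v}\in\mathcal{R}(C)$ partition $L$; two leaves in different subtrees therefore have $h(\textsf{a})>C/2$, i.e. $d_\mathcal{T}(x,x')>C$. By \Cref{lem:transportC} every optimal $\pi_C$ then vanishes on such pairs, so no mass crosses between subtrees. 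Since the mass penalty in \eqref{eq:alternativepenalization} is additive over leaves, this yields $\text{UOT}_{p,C}(\mu^L,\nu^L)=\sum_{\textsf{v}\in\mathcal{R}(C)}\text{UOT}_{p,C}(\mu_v,\nu_v)$, where $\mu_v,\nu_v$ are the restrictions of $\mu^L,\nu^L$ to $\mathcal{C}(\textsf{v})\cap L$: one inequality follows because the global optimizer restricts to a feasible plan on each subtree, the other by gluing the subtree-optimal plans into a single global one.

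Next, on a fixed subtree rooted at $\textsf{v}\in\mathcal{R}(C)$ I would reduce UOT to balanced OT via the dummy-point construction of \cite{guittet2002extended}: adjoin a point $\dum$, set $\bar\mu_v=\mu_v+\mathbb{M}(\nu_v)\delta_{\dum}$ and $\bar\nu_v=\nu_v+\mathbb{M}(\mu_v)\delta_{\dum}$, and give every edge incident to $\dum$ cost $C^p/2$ and $(\dum,\dum)$ cost $0$. A direct bookkeeping check shows the OT cost of any coupling of $\bar\mu_v,\bar\nu_v$ equals, with no additive constant, the UOT objective of its restriction to the original leaves, so the two optimal values coincide. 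The decisive step is then the $p$-th height transformation $\tilde h\coloneqq 2^{p-1}h^p$, a valid (monotone, leaf-vanishing) height function whose induced tree metric satisfies $d_{\tilde{\mathcal{T}}}(x,x')=2\tilde h(\textsf{a})=(2h(\textsf{a}))^p=d_\mathcal{T}(x,x')^p$. Attaching a new super-root above $\textsf{v}$ by an edge of weight $\tfrac{C^p}{2}-2^{p-1}h(\textsf{v})^p$ and placing $\dum$ there realizes the augmented cost exactly, since every leaf then lies at transformed distance $C^p/2$ from $\dum$. This edge weight is non-negative precisely because $\textsf{v}\in\mathcal{R}(C)$ forces $h(\textsf{v})\leq C/2$, i.e. $2^{p-1}h(\textsf{v})^p\leq C^p/2$; this is the single place where membership in $\mathcal{R}(C)$ is used.

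Finally I would invoke the classical closed form $W_1(\alpha,\beta)=\sum_{e\in E}w(e)\,|\alpha(T_e)-\beta(T_e)|$ for the $1$-Wasserstein distance on a tree, with $T_e$ the subtree hanging below $e$, applied to the balanced pair $\bar\mu_v,\bar\nu_v$ on the transformed, super-rooted tree $\tilde{\mathcal{T}}_p(\textsf{v})$. Each internal edge at a node $\textsf{w}\in\mathcal{C}(\textsf{v})\setminus\{\textsf{v}\}$ has weight $2^{p-1}(h(\text{par}(\textsf{w}))^p-h(\textsf{w})^p)$ and separates off mass $\mu^L(\mathcal{C}(\textsf{w}))$ resp. $\nu^L(\mathcal{C}(\textsf{w}))$ (the dummy mass sits above $\textsf{v}$ and never counts), while the super-root edge contributes $(\tfrac{C^p}{2}-2^{p-1}h(\textsf{v})^p)\,|\mu^L(\mathcal{C}(\textsf{v}))-\nu^L(\mathcal{C}(\textsf{v}))|$; reading off these terms reproduces the summand attached to $\textsf{v}$, and summing over $\mathcal{R}(C)$ gives the claim. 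The main obstacle I anticipate is not any single computation but the careful justification of the first step — that $\mathcal{R}(C)$ genuinely partitions the leaves and that \Cref{lem:transportC} rules out all cross-subtree transport, including the additive splitting of the mass penalty — together with verifying that the dummy-point reduction introduces no spurious constant.
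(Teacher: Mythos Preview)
Your proposal is correct and follows essentially the same route as the paper: decompose via \Cref{lem:transportC} into the subtrees indexed by $\mathcal{R}(C)$, apply the $p$-height transformation $h\mapsto 2^{p-1}h^p$ to linearize the cost, lift each subtree problem to balanced OT by adjoining a dummy leaf at distance $C^p/2$ (realized as a super-root edge of weight $\tfrac{C^p}{2}-2^{p-1}h(\textsf{v})^p$), and read off the tree $W_1$ formula edge by edge. The only cosmetic differences are that the paper performs the height transformation before the subtree decomposition rather than after, and augments with the minimal mass $(\mu^L(\mathcal{C}(\textsf{v}))-\nu^L(\mathcal{C}(\textsf{v})))_+$ rather than the full $\mathbb{M}(\nu_v)$; by \Cref{lem:add_mass} the extra common mass at $\dum$ cancels in the tree formula, so the two augmentations are equivalent.
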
 
The closed formula in \Cref{thm:KRultrametric} decomposes the underlying UOT into two tasks. While summing over subtrees carried out by the outer sum, the inner sum consists of two terms. The first considers OT within each subtree whereas the second accounts for mass deviation on that particular subtree.

The proof of this formula is given in \Cref{app:tree}.
\subsection{$\mathbf{(p,C)}$-Barycenters}\label{subsec:bary}
\begin{figure}
    \centering
    \includegraphics[width=\textwidth]{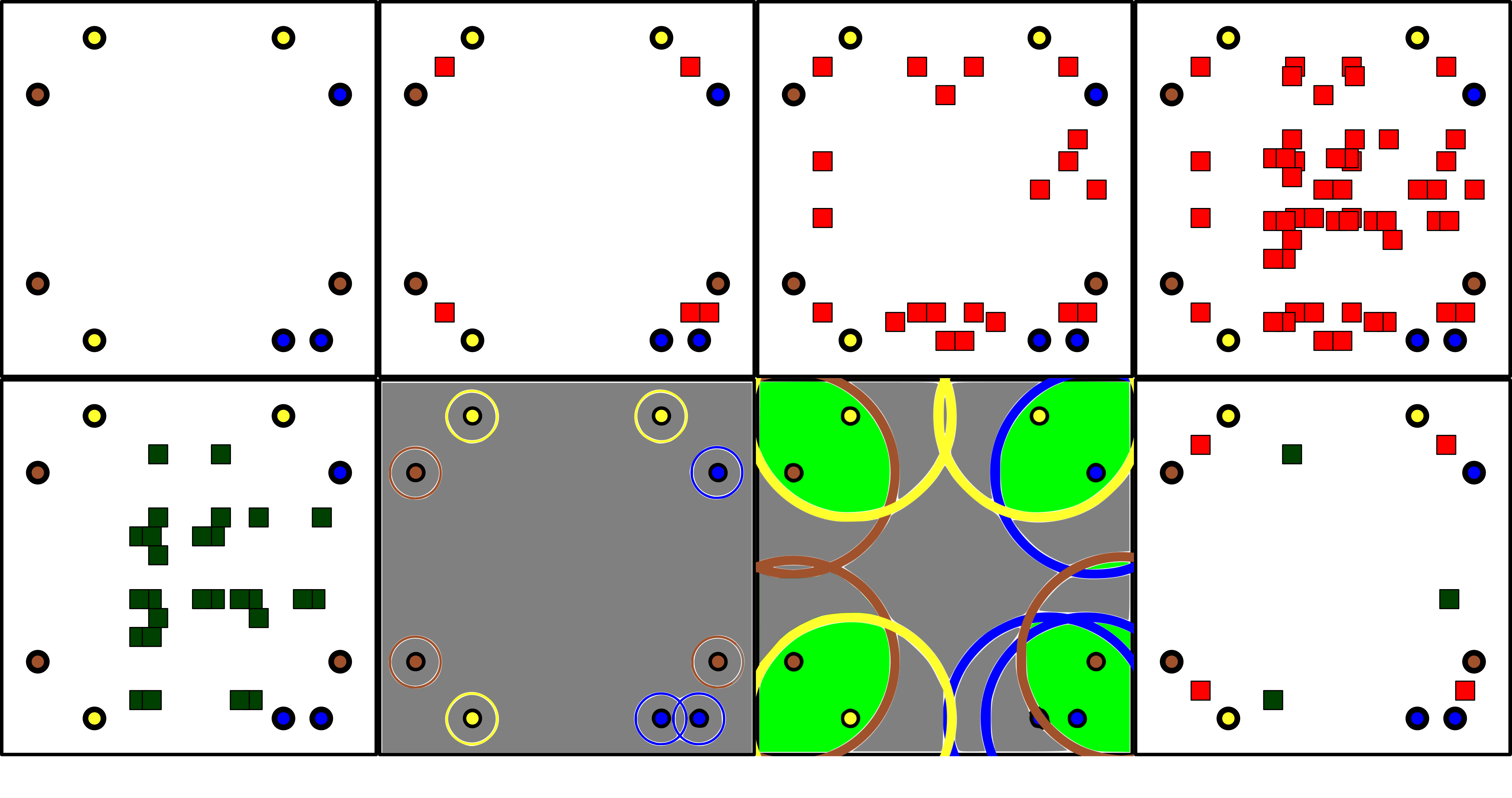}
    \caption{Centroid sets and barycenters: The support points of three ($J=3)$ measures (yellow, brown and blue dots) with unit mass at each position. \textbf{Top:} Different centroid sets $\mathcal{C}_{KR}(3,2,C)$ (red squares) with increasing value of $C$ from left to right. \textbf{Bottom-Left:} The centroid set $\mathcal{C}^{W}(3,2)$ (dark green squares) corresponding to the $2$-Wasserstein barycenter. \textbf{Bottom-Center:} Circles corresponding to $C^p$ (for two different choices of $C$) balls around the support points. The grey colouring indicates that there is no overlap of at least two circles in this area and thus no $(2,C)$-barycenter can have mass in this area. Conversely, the green colouring indicates overlap and thus the potential support area of the barycenter. \textbf{Bottom-Right:} The $(2,C)$-barycenter (red squares) for a specific choice of $C$ and the $2$-Wasserstein barycenter (dark green squares).}
    \label{fig:centroidandbarycenter}
\end{figure}
In the finite setting considered in this work a $(p,C)$-barycenter as defined in \eqref{eq:KRbarycenter} always exists, but is not necessarily unique. Moreover, the location and structure of the support of the $(p,C)$-barycenter are not fixed and hence unknown. For the Wasserstein barycenter there exists a finitely supported, sparse barycenter in this context \citep{anderes2016discrete,le2017existence}. We establish analog properties of the $(p,C)$-barycenter. 
\begin{definition}
Let $(\Y ,d)$ be a metric space, $p\geq 1$ and $J\in\mathbb{N}$. A Borel barycenter application $T^{J,p}$ associates to any points $(y_1,\dots,y_J)\in \mathcal{Y}^J$ a minimum $y^\star\in\Y$ of $\sum_{i=1}^J d^p(y_i,y)$, i.e.,
\begin{align*}
T^{J,p}(y_1,\dots ,y_J) \in  \argmin_{y \in \Y} \sum_{i=1}^J d^p(y_i,y).
\end{align*}
\end{definition}
A Borel barycenter application is in general not a function since the minimum does not need to be unique. In particular, $y=T^{J,p}(y_1,\dots,y_J)$ only means that $y$ is one of the minima of the average distance function. As the measures $\mu_1,\ldots,\mu_J$ are defined on $\X$ we usually restrict the Borel barycenter application to inputs from the space $\X\subset \Y$. We define the \emph{full centroid set} of the measures $\mu_1,\ldots,\mu_J\in\msrX$ as
\textcolor{mygreen}{
\begin{align}\label{eq:KRfullcentroid}
\begin{split}
\mathcal{C}_{KR}(J,p)=\Big\{ y \in \mathcal{Y} \ \vert \ \exists L\geq \lceil J/2 \rceil, \ \exists (i_1,\dots,i_L)\subset \{1,\dots,J\},\\ x_1,\dots,x_L: \ x_l\in \text{supp}(\mu_{i_l}) \\ \forall l=1,\dots,L: \ 
y=T^{L,p}(x_1,\dots,x_L)\Big\},
 \end{split}
\end{align}
and the \emph{restricted centroid set} 
\begin{align}
\begin{split}
\label{eq:KRCentroid}
\mathcal{C}_{KR}(J,p,C)=\Big\{ &y=T^{L,p}(x_1,\dots,x_L)\in\mathcal{C}_{KR}(J,p)\, \mid \forall 1\leq l \leq L: \\ &d^p(x_l,y)\leq C^p; \ \es{l=1}{L}d^p(x_l,y) \leq \frac{C^p (2L-J)}{2} \Big\}. 
\end{split}
\end{align}
}
\textcolor{mygreen}{We stress that for each $L$-tupel $(x_1,\dots,x_L)$ one fixed representative of $T^{L,p}(x_1,\dots,x_L)$ is chosen for the construction of the centroid set $C_{KR}(J,p,C)$. To streamline the presentation any statement concerning $\mathcal{C}_{KR}(J,p,C)$ in the following theorem is to be understood in the sense that there exists a choice of $\mathcal{C}_{KR}(J,p,C)$ such that the statement holds true.}
\begin{theorem}\label{thm:bary_prop}
Let $\mu_1,\dots ,\mu_J \in \msrX$ be a collection of non-negative measures on the finite discrete space $\X\subset \Y$. For any $C>0$ it holds that
\begin{itemize}
\item[(i)] $$\inf_{\mu \in \msrY}F_{p,C}(\mu)=\inf_{\substack{\mu \in \mathcal{M}_+(\Y)\\\text{supp}(\mu)\subseteq\mathcal{C}_{KR}(J,p,C) }}F_{p,C}(\mu).$$ Moreover, any $(p,C)$-barycenter $\mu^\star$ satisfies $\text{supp}(\mu^\star)\subseteq \mathcal{C}_{KR}(J,p,C)$ and its total mass is bounded by \[0\leq\mathbb{M}(\mu^\star)\leq \frac{2}{J}\sum_{i=1}^J \mathbb{M}(\mu_i).\]
\item[(ii)] \textcolor{mygreen}{For any $(p,C)$-barycenter $\mu^*$ and any point $y \in \text{supp}(\mu^*)$, there exist UOT plans $\pi_i$ between $\mu^*$ and $\mu_i$ for $i=1,\dots,J$, respectively, such that if $\pi_i(y,x)>0$, then there exists $L\geq \lceil J/2 \rceil$, $x_l\in \text{supp}(\mu_{i_l})$ for $l=2,\dots,L$, $(i_2,\dots,i_L)\subset \{1,\dots ,J\}$ and $i_l\neq i$ for $l=2,\dots ,L$ with $y=T^{L,p}(x,x_{i_2},\dots,x_{i_L})$, $\pi_{j}(y,x_{j})>0$ if $j \in \{i_2,\dots,i_L \}$.\\
Additionally, if for any $(x_1,\dots,x_L)\in \Y^L$ it holds that
\begin{align}\label{eq:inj}
    T^{L,p}(x_1,\dots,x_L)=T^{L,p}(y_1,x_2,\dots,x_L) \Leftrightarrow x_1=y_1,
\end{align}
then $\pi_i(y,x)\in \{0,\mu^*(y)\}$ for $i=1,\dots,J$.}
\item[(iii)] If $M_i\coloneqq\vert\text{supp}(\mu_i)\vert$ for $1\leq i\leq J$ then there exists a $(p,C)$-barycenter $\mu^\star$ such that $$\lvert \text{supp}(\mu^\star)  \rvert \leq \min \left\{ \lvert \mathcal{C}_{KR}(J,p,C) \rvert , \sum_{i=1}^J M_i \right\}.$$
\item[(iv)] If $C_1\leq C_2$, then it holds
\[
\inf_{\mu \in \msrY}F_{p,C_1}(\mu)\leq \inf_{\mu \in \msrY}F_{p,C_2}(\mu).
\]
\item[(v)] Furthermore, set $\mathcal{Z}\coloneqq \bigcup_{i=1}^J \text{supp}(\mu_i)\cup\mathcal{C}_{KR}(J,p)$ and define $$d_{\min}^\prime\coloneqq\min_{x\in \mathcal{Z}\setminus\mathcal{C}_{KR}(J,p), \ y\in \mathcal{C}_{KR}(J,p)} d(x,y).$$ 
If $C\leq d_{\min}^\prime$, then the $(p,C)$-barycenter $\mu^\star$ is given by
$$
\mu^\star=\es{x \in \X}{} med(\mu_1(x),\dots ,\mu_J(x))\delta_{x}.
$$
\item[(vi)] Let $C>J^{1/p}\text{diam}(\mathcal{Z})$ and let $\mu_1,\dots ,\mu_J$ be ordered such that $\mathbb{M}(\mu_i)\leq \mathbb{M}(\mu_j)$ for $i\leq j$. Suppose that $J$ is odd or there there exists no point $y \in \Y$ contained in at least $J/2$ different support sets. Then, for any $(p,C)$-barycenter $\mu^\star$ it holds that $\mathbb{M}(\mu^\star)= \mathbb{M}\left(\mu_{\lceil J/2 \rceil}\right)$. Else, there exists at least one $(p,C)$-barycenter with this total mass.
\end{itemize}
\end{theorem}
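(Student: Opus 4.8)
The engine for parts (i) and (ii) will be two local perturbation tests applied to a fixed $(p,C)$-barycenter $\mu^\star$ together with optimal UOT plans $\pi_1,\dots,\pi_J$ between $\mu_i$ and $\mu^\star$. First I would fix a support point $y$ and split the mass $\mu^\star(y)$ into infinitesimal chunks according to a joint refinement of the plans, so that a chunk is transported to a definite point $x_l\in\text{supp}(\mu_{i_l})$ in each of $L$ measures and destroyed in the remaining $J-L$. Its contribution to $J\cdot F_{p,C}$ is (per unit mass) $\sum_{l=1}^L d^p(x_l,y)+\tfrac{C^p}{2}(J-L)$. The \emph{relocation test} (reroute the chunk to another point) forces $y$ to minimise $\sum_{l=1}^L d^p(x_l,\cdot)$, i.e. $y=T^{L,p}(x_1,\dots,x_L)$, while \Cref{lem:transportC} gives $d^p(x_l,y)\le C^p$. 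The \emph{removal test} (delete the chunk from $\mu^\star$, so the freed mass of each $\mu_{i_l}$ becomes destroyed) changes the objective by $\tfrac{C^p}{2}(2L-J)-\sum_{l=1}^L d^p(x_l,y)$ per unit, which by optimality must be nonnegative; this is exactly $\sum_l d^p(x_l,y)\le \tfrac{C^p(2L-J)}{2}$ and forces $L\ge\lceil J/2\rceil$. These are precisely the membership conditions of $\mathcal{C}_{KR}(J,p,C)$, giving $\text{supp}(\mu^\star)\subseteq\mathcal{C}_{KR}(J,p,C)$ (the representative-choice point being absorbed into the stated convention). The mass bound in (i) I would obtain separately by testing $\mu^\star$ against $\mu=0$: since $F_{p,C}(0)=\tfrac{C^p}{2J}\sum_i\mathbb{M}(\mu_i)$ and $\mathbb{M}(\pi_i)\le\mathbb{M}(\mu_i)$ gives $\text{UOT}_{p,C}(\mu_i,\mu^\star)\ge\tfrac{C^p}{2}(\mathbb{M}(\mu^\star)-\mathbb{M}(\mu_i))$, the inequality $F_{p,C}(\mu^\star)\le F_{p,C}(0)$ rearranges to $\mathbb{M}(\mu^\star)\le\tfrac{2}{J}\sum_i\mathbb{M}(\mu_i)$.

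Part (ii) is the same computation read locally rather than globally, and this is where I expect the main difficulty. Rather than exhibit an explicit consistent joint coupling (which need not exist for arbitrary marginals), I would argue by contradiction: starting from optimal plans, if some edge $(y,x)$ with $\pi_i(y,x)>0$ admitted no witnessing tuple $(x,x_{i_2},\dots,x_{i_L})$ realised by edges $\pi_j(y,x_j)>0$ already present with $y=T^{L,p}$ and $L\ge\lceil J/2\rceil$, then the relocation/removal tests could be invoked on that edge to strictly lower $F_{p,C}$, contradicting optimality; hence every transport edge is completable, which is the assertion. The all-or-nothing conclusion under \eqref{eq:inj} follows because injectivity of $T^{L,p}$ in each coordinate makes the $i$-th connection point of a barycenter-tuple at $y$ unique, so $\pi_i(y,\cdot)$ cannot be split across two points; linearity of the per-unit cost in the transported fraction then forces $\pi_i(y,x)\in\{0,\mu^\star(y)\}$.

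For part (iii), I would use part (i) to restrict the barycenter problem to the \emph{finite} set $\mathcal{C}_{KR}(J,p,C)$, turning the joint search for $\mu^\star$ and the plans $\pi_i$ into a single finite linear program in the variables $\{\mu(y)\}$ and $\{\pi_i(y,x)\}$. Passing to a basic (vertex) optimal solution, a standard cycle-cancelling argument lets the plan graphs be taken acyclic; an acyclicity/injectivity count on the bipartite incidence between $\text{supp}(\mu^\star)$ and $\bigsqcup_i\text{supp}(\mu_i)$ (each support point carries at least one transport edge by (i)) then bounds $\lvert\text{supp}(\mu^\star)\rvert$ by $\sum_i M_i$, while the bound $\lvert\mathcal{C}_{KR}(J,p,C)\rvert$ is automatic. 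Part (iv) is immediate: by \Cref{prop:KRdistance}(iv) the integrand $\text{KR}^p_{p,C}(\mu_i,\mu)$ is nondecreasing in $C$, so $F_{p,C_1}\le F_{p,C_2}$ pointwise and $\inf F_{p,C_1}\le F_{p,C_1}(\mu^\star_{C_2})\le F_{p,C_2}(\mu^\star_{C_2})=\inf F_{p,C_2}$.

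The two extreme regimes (v) and (vi) I would treat directly. For (v), when $C\le d_{\min}'$, \Cref{lem:transportC} together with the definition of $d_{\min}'$ rules out transport between a point of $\mathcal{Z}\setminus\mathcal{C}_{KR}(J,p)$ and any genuinely new centroid location, forcing the barycenter onto $\X$ with no active transport; the Fréchet functional then decouples over $x\in\X$ into $\tfrac{C^p}{2J}\sum_i\lvert\mu_i(x)-\mu(x)\rvert$ (using \Cref{prop:KRdistance}(ii)), whose minimiser over $\mu(x)\ge 0$ is the coordinatewise median. For (vi), when $C>J^{1/p}\diam(\mathcal{Z})$, matching one unit of mass saves destruction $C^p$ at transport cost at most $\diam^p(\mathcal{Z})$, so every optimal plan is maximally matched and, writing $m=\mathbb{M}(\mu^\star)$, the objective equals $\tfrac{C^p}{2}\sum_i\lvert m-\mathbb{M}(\mu_i)\rvert$ up to a transport contribution controlled by $\diam^p(\mathcal{Z})$. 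The dominant term is minimised at the median total mass $\mathbb{M}(\mu_{\lceil J/2\rceil})$, and the threshold on $C$ makes it dominate the transport perturbation; the parity and overlap hypotheses are exactly what is needed to guarantee that the median is attained uniquely, or in the even/degenerate case by at least one barycenter. The genuinely delicate points I anticipate are the constant bookkeeping in (vi) (verifying that the median strictly beats the transport term under the stated threshold, and isolating the role of the ``no point in $\ge J/2$ supports'' condition) and making the witnessing-tuple argument in (ii) fully rigorous.
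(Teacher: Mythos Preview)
Your route is genuinely different from the paper's. The paper first builds the ``lift'' of \Cref{sec:pac}: it augments each $\mu_i$ by a dummy point $\dum$, shows (Lemma~\ref{lem:fre_eq}) that the $(p,C)$-barycenter problem is an honest $p$-Wasserstein barycenter problem on $\tilde{\Y}$, and then (Proposition~\ref{prop:multimarginal}) identifies it with a balanced multi-marginal problem. Parts (i) and (ii) are then read off from the optimal multi-coupling $\pi$ via the pushforward $\tilde T_C^{J,p}\#\pi$: the two-sided plans $\tilde\pi_i$ are \emph{constructed} from $\pi$ (not assumed), and Corollary~\ref{cor:centroid_sets} pins the support to $\mathcal C_{KR}(J,p,C)$. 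Your relocation/removal tests work directly on the unbalanced problem and avoid the lift entirely; in fact your removal test cleanly yields both $L\ge\lceil J/2\rceil$ and the inequality $\sum_l d^p(x_l,y)\le C^p(2L-J)/2$, and your mass bound via comparison with $\mu=0$ is more elementary than the paper's (which routes through (ii) and Lemma~\ref{lem:borelapplication_prop}). What the paper's lift buys is that the ``joint refinement'' you need is not hypothetical: the multi-coupling \emph{is} the refinement, and the plans $\pi_i$ with the witnessing property are manufactured from it rather than assumed to exist.

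The place your sketch does not close is (iii). You propose to make ``the plan graphs'' acyclic by cycle-cancelling and then count edges in the bipartite incidence between $\text{supp}(\mu^\star)$ and $\bigsqcup_i\text{supp}(\mu_i)$. Even if each $\pi_i$ is a vertex of its own transportation polytope (hence a spanning forest on $|\text{supp}(\mu^\star)|+M_i$ vertices), the \emph{union} over $i$ of these forests need not be acyclic, and the resulting edge count does not isolate $|\text{supp}(\mu^\star)|\le\sum_i M_i$; at best one gets bounds of the form $(\lceil J/2\rceil-1)\,|\text{supp}(\mu^\star)|\le\sum_i M_i-1$, which is a different statement. The paper obtains the sharp $\sum_i M_i$ bound precisely by passing to the multi-marginal LP on $\tilde{\Y}^J$: that program has only $\sum_i(M_i+1)$ marginal constraints, of which $J-1$ are redundant, so a basic feasible multi-coupling has at most $\sum_i M_i+1$ atoms, and the pushforward barycenter inherits this support size (with the $+1$ absorbed by $\dum$). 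Without the multi-marginal lift, I do not see how your bipartite argument reaches $\sum_i M_i$; you would need either to invoke the lift at this single step or to find a genuinely new counting argument. Parts (iv)--(vi) of your plan match the paper's in substance.
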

The proof is based on the fact that finding a $(p,C)$-barycenter can be proven to be equivalent to solving a \emph{multi-marginal optimal transport problem} (\Cref{app:liftbary}). Statement $(i)$ provides insights into the structure of the support of any $(p,C)$-barycenter and its dependency with respect to the magnitude of $C$. The definition of $\mathcal{C}_{KR}(J,p,C)$ can be understood as a joint restriction on $\sum_{i=1}^{L}d^p(x_i,y)$ combined with an individual restriction on each $d^p(x_i,y)$  of the original centroid points of $\mathcal{C}_{KR}(J,p)$. The joint restriction ensures that simply deleting any mass at a given centroid point (and thus reducing the total mass of the measure) does not improve the objective value. This is a minimal feasibility assumption on the considered centroid point, as otherwise no measure containing this point can be optimal. The second restriction concerns each point individually. If a point $x_i$ has a distance larger than $C^p$ from a point $y$, then, by \Cref{lem:transportC}, there is no transport between $y$ and $x_i$. Thus, centroids which have have a larger distance to one of the points $x_1,\dots,x_L$ they are constructed from can not be in the support of any $(p,C)$-barycenter. This also gives rise to some helpful intuition for the support structure of any $(p,C)$-barycenter. Considering all $C^p$-neighbourhoods around any of the support points of the $\mu_i$, then a $(p,C)$-barycenter can only have support in regions where at least balls from $\lceil J/2 \rceil$ different measures intersect. A visual representation of this is given in the center of bottom row of \Cref{fig:centroidandbarycenter}. By definition, the sets $\mathcal{C}_{KR}(J,p,C)$ are equipped with a natural ordering in the sense that if $C_1\leq C_2$ then $\mathcal{C}_{KR}(J,p,C_1)\subseteq \mathcal{C}_{KR}(J,p,C_2)$. Moreover, if $C$ is large enough then $\mathcal{C}_{KR}(J,p,C)=\mathcal{C}_{KR}(J,p)$. We illustrate these sets in the top row \Cref{fig:centroidandbarycenter}. We observe that the cardinality of the restricted centroid set in \eqref{eq:KRCentroid} decreases with decreasing $C$. In the extremes for large $C$ the restricted centroid sets coincides with the full centroid sets in \eqref{eq:KRfullcentroid} that is independent of $C$. For small $C$, if there is no point which is contained in the support of at least $J/2$ measures, the restricted centroid set is empty. For an illustration we refer to the top row of \Cref{fig:centroidandbarycenter}.\\
\textcolor{mygreen}{Property $(ii)$ is an analogue to a well-known characterization \citep{anderes2016discrete} of the $p$-Wasserstein barycenter on $\mathbb{R}^d$ with Euclidean distance $d_2$, where the transport from the barycenter to the underlying measures is characterized by a transport map.  The corresponding statement for the $(p,C)$-barycenter holds true as well in this context. Indeed, on $(\mathbb{R}^d,d_2)$ condition \eqref{eq:inj}, which can be understood as an injectivity-type assumption on the barycentric application, is satisfied due to the fact that $T^L(x_1,\dots,x_L)=\frac{1}{L}\sum_{l=1}^Lx_l$. However, for $(\mathbb{R}^d,d_1)$ this assertion does not hold. Consider $x_1<x_2<x_3<x_4\in \mathbb{R}$ and measures $\mu_1=\delta_{x_1}+\delta_{x_2}, \mu_2=\delta_{x_3}+\delta_{x_4}$, then any measure of the form $\mu^*=2\delta_{y}$ for any $y \in [x_2,x_3]$ is a $(p,C)$-barycenter for $C>2\lvert x_1-x_4\rvert$. Thus, there only exist mass-splitting UOT plans between $\mu^*$ and $\mu_1,\mu_2$ and the transport is not characterized by a transport map. On more general spaces such as a tree $\mathcal{T}$ rooted at $r$, three leaves $x_1,x_2,x_3$ and positive edge weights $e_1,\dots, e_3\in (0,1)$ the barycenter on $\mathcal{T}$ of any two leafs $x_i\neq x_j$, is the root $r$. In particular, in this example, or in fact in any tree $\mathcal{T}=(V,E)$ which has a vertex $y$ with degree of at least three\footnote{The degree of a vertex in a graph is the number of vertices which are adjacent to it.} condition \eqref{eq:inj} fails. The unique $(2,2)$-barycenter of two measures $\mu_1=\delta_{x_1}+\delta_{x_2}$ and $\mu_2=\delta_{x_2}+\delta_{x_3}$ is given by $\mu^*=2\delta_r$. Thus, there are again only mass-splitting UOT plans between $\mu^*$ and $\mu_1$ and $\mu_2$. However, for the unit circle $\mathcal{S}^1$ equipped with its natural arc-length distance property \eqref{eq:inj} does hold. Assume $a_0=T^L(x_1,\dots,x_L)=T^{L,p}(y_1,\dots,x_L)$, $a_1=T^{L-1,p}(x_2,\dots,x_L)$ and for each $x\in \mathcal{S}_1$ denote $H_r(x)$ and $H_l(x)$ as the halfcircle right and left of $x$, respectively. It is straightforward to see by contraposition that if it holds $a_1\in H_r(a_0)$, then this implies $x_1,y_1\in H_l(a_1)$ and $x_1,y_1\in H_l(a_0)$. However, it also holds $d(x_1,a_0)=d(y_1,a_0)$, and thus $\langle x_1-y_1,a_0 \rangle=0$. In particular, this implies that either $x_1\in H_l(a_0)$ and $y_1\in H_r(a_0)$ or vice versa and hence $x_1=y_1$. The case $a_1\in H_l(a_0)$ is analog and the case $a_0=a_1$ clear. 
}\\
Property $(iii)$ guarantees the existence of \textit{sparse} $(p,C)$-barycenters. For large $C$ the size $\mathcal{C}_{KR}(J,p,C)$ scales as $\prod_{i=1}^{J} M_i$, growing essentially exponentially in $J$. However, here we see that there always exists a $(p,C)$-barycenter supported on a sparse subset of $\mathcal{C}_{KR}(J,p,C)$ which has cardinality growing only linearly in $J$. Part $(iv)$ simply extends the montonicity of the $(p,C)$-KRD to the $(p,C)$-Fr\'echet functional. Statement $(v)$ yields a critical point after which decreasing $C$ does no longer change the resulting $(p,C)$-barycenter and provides a closed form characterisation of the $(p,C)$-barycenter in this context. Finally, statement $(vi)$ enables control on the total mass of the $(p,C)$-barycenter for large values of $C$. In particular, since the total mass is close to the median of the total masses of the $\mu_i$, we point out that the total mass of the $(p,C)$-barycenter in this setting is robust against outliers. A small amount of measures with unreasonably high mass has no impact on the total mass of the $(p,C)$-barycenter. \\
Naturally, we compare the $(p,C)$-barycenter to its popular Wasserstein analogue in (\ref{eq:otbarycenter}). As proven in \cite{le2017existence} \citep[and initially for $p=2$ for $\mathbb{R}^d$ by][]{anderes2016discrete} the support of any $p$-Wasserstein barycenter is contained in
\begin{align}\label{eq:WassersteinCentroid}
\mathcal{C}_{W}(J,p)=\left\{ y \in \mathcal{Y} \ \vert \ y=T^{J,p}(x_1,\dots ,x_J), x_i \in \text{supp}(\mu_i)  \right\}.
\end{align}
Compared to the $p$-Wasserstein barycenter of the probability measures $\mu_1,\ldots,\mu_J$ the restricted centroid set $\mathcal{C}_{KR}(J,p,C)$ allows more flexibility for specific cases and can provide a more reasonable representation of the data. We illustrate this in \Cref{fig:centroidandbarycenter} (bottom-left/right) where the $(2,C)$-barycenter clearly represents all clusters while the $2$-Wasserstein barycenter fails to capture them. Nevertheless, if $C$ is large enough and all measures have equal total mass both barycenters coincide.
\begin{corollary}
If $C> 2^{\frac{1}{p}}\text{diam}(\mathcal{Z})$ and $\mathbb{M}(\mu_1)=\mathbb{M}(\mu_2)=\dots =\mathbb{M}(\mu_J)$, then any $p$-Wasserstein barycenter is also a $(p,C)$-barycenter and vice versa.
\end{corollary}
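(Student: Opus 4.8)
The plan is to reduce the whole statement to the balanced Wasserstein barycenter via \Cref{prop:KRdistance}(iii) and then to rule out, under the threshold on $C$, any competitor whose total mass differs from the common mass. Throughout I set $m\coloneqq \mathbb{M}(\mu_1)=\dots=\mathbb{M}(\mu_J)$ and $D\coloneqq\operatorname{diam}(\mathcal{Z})$, and I record that $C>2^{1/p}D$ is equivalent to $\tfrac{C^p}{2}>D^p$. Using the scaling $W_p^p(a\mu,a\nu)=a\,W_p^p(\mu,\nu)$, the notion of a $p$-Wasserstein barycenter extends verbatim to the equal-mass family $\mu_1,\dots,\mu_J$, and such a barycenter $\tilde\mu$ has total mass $m$ and support in $\mathcal{C}_W(J,p)\subseteq\mathcal{C}_{KR}(J,p)\subseteq\mathcal{Z}$. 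Writing $G(\nu)\coloneqq \tfrac1J\sum_{i=1}^J W_p^p(\mu_i,\nu)$ and $V^\star\coloneqq\min_{\mathbb{M}(\nu)=m}G(\nu)$, the first step is to observe that since all measures in sight are supported in $\mathcal{Z}$ and $C\geq D$, \Cref{prop:KRdistance}(iii) gives $\text{KR}_{p,C}^p(\mu_i,\nu)=W_p^p(\mu_i,\nu)$ for every equal-mass pair. Hence $F_{p,C}$ and $G$ coincide on mass-$m$ measures supported in $\mathcal{Z}$, and in particular $F_{p,C}(\tilde\mu)=V^\star$.

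The core of the proof will be a mass-locking estimate: for every $\mu$ supported in $\mathcal{Z}$ with $\mathbb{M}(\mu)=m'$,
\begin{equation*}
F_{p,C}(\mu)\ \geq\ V^\star+|m'-m|\Big(\tfrac{C^p}{2}-D^p\Big).
\end{equation*}
By \Cref{thm:bary_prop}(i) it suffices to test competitors supported in $\mathcal{Z}$, so this single estimate carries the whole statement. To establish it I would fix $i$ and an optimal UOT plan $\pi_i$ for $\text{KR}_{p,C}^p(\mu_i,\mu)$, and first check that the lighter marginal is matched completely: transporting one unit between points of $\mathcal{Z}$ costs at most $D^p<C^p$, while saturating a marginal lowers the mass penalty at rate $C^p$ (this is the same mechanism underlying \Cref{lem:transportC}). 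Consequently the transported mass equals $\min(m,m')$, the penalty term equals $C^p\tfrac{|m'-m|}{2}$, and the remaining transport part is a genuine balanced optimal transport cost $W_p^p(\mu_i,\nu_i)$ onto a mass-$\min(m,m')$ sub-measure $\nu_i$ of $\mu$ (or, when $m'<m$, between a restriction of $\mu_i$ and all of $\mu$).

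The hard part will be comparing the averaged transport cost $\tfrac1J\sum_i W_p^p(\mu_i,\nu_i)$ with $V^\star$, since the per-measure targets $\nu_i$ may differ and can push the average strictly below any single-target value. My device to overcome this, avoiding the failure of a $p$-th-power triangle inequality, is to modify one transport plan rather than compose two. I would pick a common mass-$m$ measure $\sigma$ built from $\mu$: add mass $(m-m')$ at a single point of $\mathcal{Z}$ when $m'<m$, or truncate $\mu$ to a fixed mass-$m$ sub-measure when $m'>m$. In either case $\sigma$ and the plan's target differ by a nonnegative piece of mass at most $|m'-m|$; re-routing exactly that piece inside the existing plan changes the affected destinations by at most $D^p$ per unit, yielding $W_p^p(\mu_i,\sigma)\leq W_p^p(\mu_i,\nu_i)+|m'-m|D^p$ for each $i$. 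Averaging and invoking $G(\sigma)\geq V^\star$ then gives $\tfrac1J\sum_i W_p^p(\mu_i,\nu_i)\geq V^\star-|m'-m|D^p$, and adding back the penalty $C^p\tfrac{|m'-m|}{2}$ produces the displayed bound.

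Finally, because $\tfrac{C^p}{2}-D^p>0$, the estimate shows $\inf_{\msrY}F_{p,C}=V^\star$ and that equality $F_{p,C}(\mu)=V^\star$ forces $\mathbb{M}(\mu)=m$. Thus any $(p,C)$-barycenter $\mu^\star$ has total mass $m$, so $F_{p,C}(\mu^\star)=G(\mu^\star)=V^\star$ identifies it as a minimizer of $G$ over mass-$m$ measures, i.e.\ a $p$-Wasserstein barycenter; conversely every $p$-Wasserstein barycenter $\tilde\mu$ satisfies $F_{p,C}(\tilde\mu)=V^\star=\inf_{\msrY}F_{p,C}$ and is therefore a $(p,C)$-barycenter, which gives both inclusions.
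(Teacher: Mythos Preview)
Your argument is correct. The paper states this corollary without proof, so there is no line-by-line comparison to make; the closest analogue in the paper is the proof of \Cref{thm:bary_prop}(vi), which pins down the barycenter's total mass by a local improvement step but requires the stronger threshold $C>J^{1/p}\mathrm{diam}(\mathcal{Z})$. Your route is different and, in the equal-mass situation, sharper: rather than perturbing a candidate, you prove the global estimate $F_{p,C}(\mu)\geq V^\star+|m'-m|\bigl(\tfrac{C^p}{2}-D^p\bigr)$ by first noting that the lighter marginal is always saturated (since $D^p<C^p$), and then re-routing or extending each optimal UOT plan towards a single fixed mass-$m$ reference measure $\sigma$ at cost at most $|m'-m|\,D^p$. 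Averaging and invoking $G(\sigma)\geq V^\star$ gives the bound. This avoids the worst-case factor $J$ appearing in the proof of (vi) (which must handle all ranks $k$ of the candidate mass among $\mathbb{M}(\mu_1),\dots,\mathbb{M}(\mu_J)$) and yields exactly the constant $2^{1/p}$ asserted in the corollary. Both arguments rest on the same mechanism---transport within $\mathcal{Z}$ is strictly cheaper than the mass penalty---but your quantitative inequality makes the mass-locking explicit and dispenses with any case analysis.
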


While this shows that the $(p,C)$-barycenter is a strict generalisation of the usual $p$-Wasserstein barycenter as the solutions coincide for large $C$, for smaller values of $C$ there can be significant differences. One such striking difference between the $p$-Wasserstein barycenter and the $(p,C)$-barycenter comes in the form of a localization property. Let $B_1,\dots ,B_R \subset \Y$ such that $\text{supp}(\mu_i)\subset \cup_{r=1}^{R}B_r$ with $\text{diam}(B_r)\leq C$ for all $r=1,\dots ,R$ and $d(B_k,B_l)>2^{1/p} C$ for all $k\neq l$. \textcolor{mygreen}{Here, the $(p,C)-$barycenter tends to place mass between the clusters $B_1,\dots,B_R$. However, a $(p,C)$-barycenter is obtained by combining $R$ barycenters of the measures restricted to the $B_1,\dots,B_R$, respectively.}
\begin{lemma}
\label{lem:cluster}
Let $\mu_1,\dots ,\mu_J \in \msrX$ such that for all $i=1,\dots ,J$ it holds $\text{supp}(\mu_i)\subset \cup_{r=1}^{R}B_r$ for some $B_1,\dots ,B_R \subset \Y$ with $\text{diam}(B_r)\leq C$ for all $r=1,\dots ,R$ and $d(B_k,B_l)> 2^{1/p}C$ for all $k\neq l$. For $r=1,\dots ,R$, let
\[
\mu^*_r \in \argmin_{\mu\in\mathcal{M}_+(conv(B_r))} \frac{1}{J}\sum_{i=1}^J \text{KR}_{p,C}^p(\mu,{\mu_i}_{\vert B_r}),
\]
where $conv(B_r)$ is the convex hull of $B_r$ for $r=1,\dots ,R$. Then, the measure $\es{r=1}{R} \mu_r^*$ is a $(p,C)$-barycenter of $\mu_1,\dots ,\mu_J$.
\end{lemma}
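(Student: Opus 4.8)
Write $F^{(r)}(\nu):=\frac1J\sum_{i=1}^{J}\text{KR}_{p,C}^{p}(\nu,\mu_i|_{B_r})$, so that $\mu_r^*$ minimises $F^{(r)}$ over $\mathcal M_+(conv(B_r))$. The plan is to sandwich
\begin{align*}
\sum_{r=1}^{R}F^{(r)}(\mu_r^*)\ \le\ \inf_{\mu\in\msrY}F_{p,C}(\mu)\ \le\ F_{p,C}\Big(\es{r=1}{R}\mu_r^*\Big)\ \le\ \sum_{r=1}^{R}F^{(r)}(\mu_r^*),
\end{align*}
which forces equality throughout and exhibits $\sum_r\mu_r^*$ as a $(p,C)$-barycenter. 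The middle inequality is trivial since $\sum_r\mu_r^*\in\msrY$. The right-hand inequality (the cheap direction) I would prove by gluing optimal sub-couplings: as the $B_r$ are pairwise disjoint we have $\mu_i=\sum_r\mu_i|_{B_r}$, and if $\pi_i^{(r)}$ is optimal for $(\mu_r^*,\mu_i|_{B_r})$ then $\pi_i:=\sum_r\pi_i^{(r)}$ lies in $\Pi_{\le}(\sum_r\mu_r^*,\mu_i)$. Because the second marginals live on the disjoint sets $B_r$, both the transport cost and the mass penalty in \eqref{eq:UOT} split additively over $r$, giving $\text{UOT}_{p,C}(\sum_r\mu_r^*,\mu_i)\le\sum_r\text{UOT}_{p,C}(\mu_r^*,\mu_i|_{B_r})$ and hence the bound after averaging over $i$.

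The real work is the left-hand inequality $\inf_\mu F_{p,C}(\mu)\ge\sum_r F^{(r)}(\mu_r^*)$. Here I would pass to the multi-marginal reformulation (\Cref{app:liftbary}), in which a candidate is encoded by a packing of \emph{groups}: each group $g$ is a tuple $(x_1,\dots,x_L)$ with $x_l\in\text{supp}(\mu_{i_l})$, distinct indices, $L\ge\lceil J/2\rceil$, placed at the centroid $y=T^{L,p}(x_1,\dots,x_L)$ and carrying mass $m_g$, subject to not drawing more than $\mu_i$ at any source point. A direct evaluation of the Fréchet functional for such a configuration gives, writing $L=|S_g|$ and $y$ for the centroid of $g$,
\begin{align*}
J\cdot F_{p,C}(\mu)=\sum_g m_g\Big(\es{l=1}{L}d^p(x_l,y)-\tfrac{C^p}{2}(2L-J)\Big)+\tfrac{C^p}{2}\es{i=1}{J}\mathbb M(\mu_i),
\end{align*}
whose last term is constant. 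Thus minimising $F_{p,C}$ is equivalent to the linear packing problem of maximising the total \emph{benefit} $\sum_g m_g\big(\tfrac{C^p}{2}(2L-J)-\min_y\sum_l d^p(x_l,y)\big)$; a group is worth using only when this per-unit benefit is positive, which is precisely the defining inequality of $\mathcal C_{KR}(J,p,C)$, and by \Cref{lem:transportC} any used group satisfies $d(x_l,y)\le C$ for all $l$.

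The crux is to show the packing admits an optimum supported on \emph{single-cluster} groups. Given a group meeting $R'\ge2$ clusters, I would split its index set by cluster into $S_1,\dots,S_{R'}$ and replace $g$ by the sub-groups $g_r$ (one per cluster met, each of mass $m_g$), discarding those with non-positive benefit; this re-uses the same source points with the same mass and so respects all constraints. It therefore suffices to prove the domination inequality
\begin{align*}
\tfrac{C^p}{2}(2L-J)-\min_y\es{l=1}{L}d^p(x_l,y)\ \le\ \sum_{r}\Big(\tfrac{C^p}{2}(2L_r-J)-\min_y\sum_{l\in S_r}d^p(x_l,y)\Big)_{+},
\end{align*}
with $L_r=|S_r|$. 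This is the \emph{main obstacle}, and it is exactly where the separation $d(B_k,B_l)>2^{1/p}C$, i.e. $d^p(B_k,B_l)>2C^p$, is sharp. The mechanism is that the joint centroid $y$ can lie within half the separation $2^{1/p-1}C$ of at most one cluster, so the points of every other cluster carry a definite minimal $p$-cost; weighing this geometric surplus of the joint centroid over the separate centroids against the penalty $\tfrac{C^p}{2}(R'-1)J$ that merging would save, the threshold $L\ge\lceil J/2\rceil$ makes the balance tip in favour of the split. I would first settle the two-cluster case $R'=2$, distinguishing the cases $L_r\ge J/2$ or $L_r<J/2$ according to which sub-groups are productive, and then reduce the general case to it by merging all clusters but one.

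Finally, applying this replacement to every cross-cluster group yields an optimal packing built from single-cluster groups only; by the additivity above (the constant term also splits as $\sum_r\tfrac{C^p}{2}\sum_i\mathbb M(\mu_i|_{B_r})$) such a packing decomposes into a disjoint union of optimal packings for the $R$ local problems, so that $\inf_\mu F_{p,C}(\mu)=\sum_r\inf_\nu F^{(r)}(\nu)=\sum_r F^{(r)}(\mu_r^*)$. Restricting the local minimisation to $conv(B_r)$ is harmless, since by \Cref{thm:bary_prop}~(i) a local barycenter is supported on centroids of points of $B_r$, which lie in $conv(B_r)$. This establishes the left-hand inequality and closes the sandwich. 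I expect the domination inequality of the third paragraph to be the only genuinely delicate step; the reductions around it are routine bookkeeping.
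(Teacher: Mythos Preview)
Your sandwich strategy and the gluing argument for the upper bound are correct, but your lower-bound route is much heavier than the paper's, and its crux---the domination inequality---is only sketched, not proved. The paper bypasses all of this: it quotes \Cref{thm:bary_prop}(i), which already confines every $(p,C)$-barycenter to the restricted centroid set $\mathcal C_{KR}(J,p,C)$, and then argues in a couple of lines that no point of this set can be a centroid $T^{L,p}(x_1,\dots,x_L)$ with inputs drawn from two different clusters. If $x_1\in B_r$, $x_2\in B_s$ with $r\ne s$, the paper shows the membership condition $d^p(x_l,y)\le C^p$ must fail for some $l$ by casework on whether $y$ lies in $B_r$, in $B_s$, or in neither, using the separation $d(B_r,B_s)>2^{1/p}C$. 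Once the centroid set decomposes cluster by cluster, \Cref{thm:bary_prop}(ii) forces the optimal transports to do so as well, and the $R$ local problems decouple. That is the entire argument.

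Your packing reformulation re-derives this decoupling from first principles, which is more self-contained but obliges you to establish the domination inequality. The two-cluster split you outline is the right shape, and the separation threshold $2^{1/p}C$ is indeed what makes it go through (for $p=2$, $J=2$ it is sharp), but the case analysis is genuine work that you have only gestured at. Since you are already willing to invoke \Cref{thm:bary_prop}(i) at the end to confine the local minimisers to $conv(B_r)$, you could invoke it at the outset instead and replace the whole packing machinery and its unproved inequality with the paper's short centroid-set check.
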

\textcolor{mygreen}{In particular, \Cref{lem:cluster} implies that the $(p,C)$-barycenter respects the cluster structure within the supports of the measures if the clustered are sufficiently separated and $C$ is adapted according to the cluster size. Examples of this setting can be seen in \Cref{fig:cluster_ellipses} and \Cref{fig:centroidandbarycenter}.} 
\section{A Lift to Optimal Transport, Wasserstein Barycenters and Multi-Marginal Optimal Transport} \label{sec:pac}
In this section, we provide the necessary tools and framework to establish our results in the previous section. Following the ideas of \cite{guittet2002extended} we state UOT in \eqref{eq:UOT} as an equivalent balanced OT problem. We extend this idea to the $(p,C)$-barycenter, showing it to be equivalent to a specific Wasserstein barycenter problem as well as a balanced multi-marginal optimal transport problem. 
\subsection{A Lift to Optimal Transport}\label{app:lifttoOT}
We fix a parameter $C>0$, introduce an additional dummy point $\dum$ and define the augmented space $\tilde{\X}\coloneqq \X\cup \{\dum\}$ with metric cost
\begin{align}\label{eq:augmetric}
\tilde{d}^p_C(x,x^\prime)=
\begin{cases}
d^p(x,x^\prime)\wedge C^p,\, &x,x^\prime\in \mathcal{X},\\[1ex]
\frac{C^p}{2},\, &x\in\mathcal{X},\,x^\prime=\dum,\\[1ex]
\frac{C^p}{2},\, &x=\dum,\,x^\prime\in\mathcal{X},\\[1ex]
0,\, &x=x^\prime=\dum.
\end{cases}
\end{align}
Notably, $\tilde{d}_C\colon \tilde{\mathcal{X}}\times\tilde{\mathcal{X}}\to \mathbb{R}_+$ defines a metric on $\tilde{\mathcal{X}}$ \cite[Lemma A1]{muller2020metrics}. Consider the subset $\mathcal{M}_+^B(\X)\coloneqq \left\lbrace \mu \in \msrX\, \mid\, \mathbb{M}(\mu)\leq B\right\rbrace\subset \mathcal{M}_+(\mathcal{X})$ of non-negative measures whose total mass is bounded by $B$. Setting $\tilde{\mu}\coloneqq\mu + (B-\mathbb{M}(\mu))\delta_\dum$, any measure $\mu \in \mathcal{M}_+^B(\mathcal{X})$ defines an \emph{augmented measure} $\tilde{\mu}$ on $\tilde{\X}$ such that $\mathbb{M}(\tilde{\mu})=B$. Hence, for two measures $\mu,\nu\in\mathcal{M}_+^B(\X)$ we can define the OT problem on $\tilde{\X}$ between their augmented measures $\tilde{\text{OT}}_{\tilde{d}_C^p}(\tilde{\mu},\tilde{\nu})$. In fact, it holds that 
\begin{align*}
\text{UOT}_{p,C}(\mu,\nu)=\text{UOT}_{d^p\wedge C^p,C}(\mu,\nu)=\tilde{\text{OT}}_{\tilde{d}^p_C}(\tilde{\mu},\tilde{\nu}),
\end{align*}
where the first equality follows by \Cref{lem:transportC} as for any optimal solution $\pi_C$ it holds $\pi_C(x,x^\prime)=0$ if $d^p(x,x^\prime)> C^p$ and the second follows by \cite[Lemma 3.1]{guittet2002extended}. The same equalities remain valid replacing $B$ by an arbitrarily large constant as summarized by the following lemma.

\begin{lemma}\label{lem:add_mass}
Consider $\mu,\nu \in \mathcal{M}_+^B(\X)$ with extended versions $\tilde{\mu},\tilde{\nu}$. Then for any $a>0$ it holds that
\begin{align*}
\tilde{\text{OT}}_{\tilde{d}^p_C}(\tilde{\mu},\tilde{\nu})=\tilde{\text{OT}}_{\tilde{d}^p_C}(\tilde{\mu}+a\delta_{\dum},\tilde{\nu}+a\delta_{\dum}).
\end{align*}
\end{lemma}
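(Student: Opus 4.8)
The plan is to prove Lemma~\ref{lem:add_mass} by exhibiting a bijection between optimal (or feasible) transport plans on the two instances that preserves the transport cost, exploiting the fact that the extra mass $a\delta_{\dum}$ can be transported at zero cost from $\dum$ to $\dum$. First I would recall that the augmented measures $\tilde{\mu}$ and $\tilde{\nu}$ both have total mass $B$, and that $\tilde{\mu}+a\delta_{\dum}$, $\tilde{\nu}+a\delta_{\dum}$ both have total mass $B+a$; since both are balanced OT problems on the finite space $\tilde{\X}$, optimal plans exist. The key observation is that $\tilde{d}_C^p(\dum,\dum)=0$, so any mass placed on the coordinate $(\dum,\dum)$ of a coupling contributes nothing to the objective.

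For the inequality $\tilde{\text{OT}}_{\tilde{d}^p_C}(\tilde{\mu}+a\delta_{\dum},\tilde{\nu}+a\delta_{\dum})\leq \tilde{\text{OT}}_{\tilde{d}^p_C}(\tilde{\mu},\tilde{\nu})$, I would take an optimal coupling $\pi\in\Pi_=(\tilde{\mu},\tilde{\nu})$ for the left-hand instance and define $\pi'\coloneqq \pi + a\,\delta_{(\dum,\dum)}$. One checks directly that $\pi'$ has the correct marginals $\tilde{\mu}+a\delta_{\dum}$ and $\tilde{\nu}+a\delta_{\dum}$: only the $\dum$-row and $\dum$-column marginals change, each increasing by exactly $a$. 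Since $\tilde{d}_C^p(\dum,\dum)=0$, the added mass incurs no cost, so $\pi'$ is feasible with the same objective value, giving the desired inequality.

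The reverse inequality is the step I expect to require the most care. Here I would start from an optimal coupling $\sigma\in\Pi_=(\tilde{\mu}+a\delta_{\dum},\tilde{\nu}+a\delta_{\dum})$ and argue that one may assume, without loss of optimality, that $\sigma(\dum,\dum)\geq a$, so that removing $a$ units of mass from the $(\dum,\dum)$ entry yields a feasible coupling $\sigma'\coloneqq\sigma-a\,\delta_{(\dum,\dum)}\in\Pi_=(\tilde{\mu},\tilde{\nu})$ of no greater cost. The justification that an optimal $\sigma$ with $\sigma(\dum,\dum)\geq a$ exists is the crux: one uses that $\dum$ is the cheapest possible target and source for mass (every edge incident to $\dum$ costs $C^p/2$, while by the metric property of $\tilde d_C$ any detour through $\dum$ between two genuine points costs at least as much as routing the mass through $\dum$-$\dum$), so a cost-minimizing plan will not route more mass away from the $(\dum,\dum)$ cell than forced by the marginal constraints. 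More carefully, I would show via a local rerouting/exchange argument that if $\sigma(\dum,\dum)<a$ then some mass currently coupling $\dum$ to a genuine point $x'$ (and symmetrically a genuine point $x$ to $\dum$) can be rerouted to accumulate on $(\dum,\dum)$ without increasing the cost, using the triangle-type inequality $\tilde d_C^p(\dum,\dum)+\tilde d_C^p(x,x')\le \tilde d_C^p(\dum,x')+\tilde d_C^p(x,\dum)$, which holds because the right-hand side equals $C^p$ while the left-hand side is at most $C^p$ by \eqref{eq:augmetric}. Combining both inequalities yields equality of the two optimal transport values, completing the proof.
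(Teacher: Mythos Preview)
Your argument is correct. The exchange inequality $\tilde d_C^p(\dum,\dum)+\tilde d_C^p(x,x')\le \tilde d_C^p(\dum,x')+\tilde d_C^p(x,\dum)$ holds exactly as you state, and the marginal bookkeeping you allude to can be made precise: since the $\dum$-marginals of $\tilde\mu+a\delta_\dum$ and $\tilde\nu+a\delta_\dum$ are each at least $a$, whenever $\sigma(\dum,\dum)<a$ both $\sigma(\dum,\X)$ and $\sigma(\X,\dum)$ are strictly positive, so the points $x,x'$ needed for the reroute always exist. Iterating drives $\sigma(\dum,\dum)$ up to $a$ at no extra cost.

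The paper takes a somewhat different route. It splits into two cases: for $p=1$ it appeals to duality, noting that the dual value depends only on $\tilde\mu-\tilde\nu$; for $p>1$ it invokes $\tilde d_C$-cyclical monotonicity of optimal plans to conclude that $(\dum,\dum)$ lies in the support of any optimizer. Your direct exchange argument is more elementary, avoids the case split, and works uniformly for all $p\ge 1$; the paper's version is terser but leans on standard OT machinery. Underneath, the two are the same observation: your exchange inequality is precisely the two-point cyclical monotonicity condition applied to the pair $\{(\dum,x'),(x,\dum)\}$.
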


\begin{proof}
For $p=1$, the result is trivial since by duality $\tilde{\text{OT}}_{\tilde{d}_C}(\tilde{\mu},\tilde{\nu})$ only depends on the difference of the measures. For $p>1$ we invoke $\tilde{d}_C$-cyclical monotonicity \cite[Thm. 5.10]{villani2008optimal} of any OT plan $\pi$ and use the property that $\tilde{d}_C^p(x,\dum)=\nicefrac{C^p}{2}$. This yields that $(\dum,\dum)\in \text{supp}(\pi)$ which leads to the desired conclusion.
\end{proof}

\subsection{A Lift to Wasserstein Barycenters}\label{app:liftbary}

We can also lift the optimization problem defining a $(p,C)$-barycenter to an equivalent $p$-Wasserstein barycenter formulation \eqref{eq:otbarycenter}. Augmentation of the underlying measures, however, is not straightforward as the total mass of the $(p,C)$-barycenter is unknown. A first crude upper bound on its total mass leads to a feasible approach.

\begin{lemma}\label{lem:mass_cap}
Consider $\mu_1,\dots ,\mu_J \in \msrX$ and let $F_{p,C}$ be their associated unbalanced Fr\'echet functional. Then it holds that
\begin{align*}
\argmin_{\mu \in \mathcal{M}_+(\Y)} F_{p,C}(\mu) = \argmin_{\substack{\mu \in \mathcal{M}_+(\Y) \\ \mathbb{M}(\mu)\leq \sum_{i=1}^J \mathbb{M}(\mu_i)}} F_{p,C}(\mu).
\end{align*}
More precisely, any $(p,C)$-barycenter $\mu^\star$ of $\mu_1,\ldots,\mu_J$ satisfies $\mathbb{M}(\mu^\star)\leq\sum_{i=1}^J \mathbb{M}(\mu_i)$.
\end{lemma}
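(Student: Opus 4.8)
The plan is to establish the stronger claim that every $(p,C)$-barycenter $\mu^\star$ obeys $\mathbb{M}(\mu^\star)\le S\coloneqq\sum_{i=1}^J\mathbb{M}(\mu_i)$; the asserted equality of the two minimizing sets then follows at once. The engine is an \emph{excess-mass removal} estimate: for an arbitrary $\mu\in\msrY$ I will construct a competitor $\mu'\in\msrY$ with $\mathbb{M}(\mu')\le S$ satisfying
\begin{equation*}
F_{p,C}(\mu')\le F_{p,C}(\mu)-\frac{C^p}{2}\bigl(\mathbb{M}(\mu)-\mathbb{M}(\mu')\bigr).
\end{equation*}
Once this is in hand, taking infima over the two feasible sets gives the first assertion (the reverse inequality being trivial, since $\{\mathbb{M}(\mu)\le S\}$ is the smaller set), and the second assertion follows by contradiction as explained below.

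For the construction I fix, for each $i$, an optimal sub-coupling $\pi_i\in\Pi_{\leq}(\mu_i,\mu)$ realizing $\text{KR}_{p,C}^p(\mu_i,\mu)=\text{UOT}_{p,C}(\mu_i,\mu)$ (optimizers exist, as noted after \eqref{eq:UOT}). Its marginal on the second factor, $\nu_i(\cdot)\coloneqq\pi_i(\X,\cdot)$, satisfies $\nu_i\le\mu$ and $\mathbb{M}(\nu_i)=\mathbb{M}(\pi_i)\le\mathbb{M}(\mu_i)$ by feasibility. Writing $\nu_i=f_i\,\mu$ with densities $0\le f_i\le 1$ (available precisely because $\nu_i\le\mu$), I set $\mu'\coloneqq(\max_{1\le i\le J}f_i)\,\mu$. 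Then $\nu_i\le\mu'\le\mu$ for every $i$, and the mass bound is a one-line estimate:
\[
\mathbb{M}(\mu')=\int \max_{1\le i\le J} f_i\,d\mu\le\sum_{i=1}^J\int f_i\,d\mu=\sum_{i=1}^J\mathbb{M}(\nu_i)\le\sum_{i=1}^J\mathbb{M}(\mu_i)=S.
\]

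The key inequality now comes from reusing the plans. Since $\nu_i=\pi_i(\X,\cdot)\le\mu'$, each $\pi_i$ remains feasible, i.e. $\pi_i\in\Pi_{\leq}(\mu_i,\mu')$; its transport cost is unchanged, while in the penalty $C^p\bigl(\tfrac{\mathbb{M}(\mu_i)+\mathbb{M}(\cdot)}{2}-\mathbb{M}(\pi_i)\bigr)$ of \eqref{eq:UOT} the mass term drops because $\mathbb{M}(\mu')\le\mathbb{M}(\mu)$. Using $\pi_i$ as a (suboptimal) competitor for $\text{UOT}_{p,C}(\mu_i,\mu')$ and averaging over $i$ yields the displayed estimate. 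If some barycenter $\mu^\star$ had $\mathbb{M}(\mu^\star)>S$, applying the construction to $\mu=\mu^\star$ produces $\mu'$ with $\mathbb{M}(\mu')\le S<\mathbb{M}(\mu^\star)$, so the estimate gives $F_{p,C}(\mu')<F_{p,C}(\mu^\star)$, contradicting minimality; hence $\mathbb{M}(\mu^\star)\le S$.

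The only genuine subtlety, and the step I would treat most carefully, is making the ``pointwise maximum of the marginals'' well defined when $\mu$ need not be finitely supported. This is exactly why I pass to densities $f_i$ with respect to the common dominating measure $\mu$: then $\max_i f_i$ is an honest measurable density, $\mu'$ is a bona fide element of $\msrY$, and the mass bound reduces to $\int\max_i f_i\,d\mu\le\sum_i\int f_i\,d\mu$. Everything else is routine bookkeeping with the mass-deviation penalty, together with the feasibility observation $\nu_i\le\mu'$ that lets the old plans be recycled.
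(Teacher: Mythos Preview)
Your proof is correct and follows essentially the same strategy as the paper: both arguments remove the part of $\mu$ not covered by the second marginals of the optimal sub-couplings $\pi_i$ and bound the remaining mass by $\sum_i\mathbb{M}(\mu_i)$. Your formulation via densities and the pointwise maximum $\mu'=(\max_i f_i)\mu$ is a slightly cleaner packaging of the paper's sequential decomposition $\mu=\sum_i\tau_i$ (and gives the explicit quantitative estimate $F_{p,C}(\mu')\le F_{p,C}(\mu)-\tfrac{C^p}{2}(\mathbb{M}(\mu)-\mathbb{M}(\mu'))$), but the underlying idea is identical.
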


\begin{proof}
Assume first that there exists a measure $\mu \in \mathcal{M}_+(\Y)$ such that $\mu=\nu_1+\nu_2$ where no transport between $\nu_2$ and any $\mu_i$ occurs in the optimal solution of $\text{UOT}_{p,C}(\mu,\mu_i)$ for $1\leq i \leq J$ and it holds $\mathbb{M}(\nu_2)>0$. Thus it holds
\begin{align*}
F_{p,C}(\mu)=F_{p,C}(\nu_1+\nu_2)= F_{p,C}(\nu_1)+(C^p/2) \mathbb{M}(\nu_2) > F_{p,C}(\nu_1)
\end{align*}
and we improve the objective value of $\mu$ by removing $\nu_2$. Hence, let $\mu\in\mathcal{M}_+(\Y)$ be any measure such that $\nu_2\equiv0$. Consider $\pi_i$ the optimal solution for $\text{UOT}_{p,C}(\mu,\mu_i)$ for each $1\leq i \leq J$. Decompose the measure $\mu=\sum_{i=1}^J \tau_i$, where $\tau_i$ is the mass of $\mu$ transported to $\mu_i$ according to $\pi_i$ and which is not yet included in any $\tau_j$ for $j<i$. Clearly, $\mathbb{M}(\mu)=\sum_{i=1}^J \mathbb{M}(\tau_i)\leq \sum_{i=1}^J \mathbb{M}(\mu_i)$ and we conclude that
\begin{align*}
\min_{\mu \in \mathcal{M}_+(\Y)} F_{p,C}(\mu) = \min_{\substack{\mu \in \mathcal{M}_+(\Y) \\ \mathbb{M}(\mu)\leq \sum_{i=1}^J \mathbb{M}(\mu_i)}} F_{p,C}(\mu).
\end{align*}
By our first considerations the claim follows.
\end{proof}

Given the upper bound on the total mass of any $(p,C)$-barycenter at our disposal we can formulate a lift of the $(p,C)$-barycenter problem to a related $p$-Wasserstein barycenter problem. For this, let $\tilde{\Y}\coloneqq\Y\cup\{\dum\}$ endowed with the metric $\tilde{d}_C$ in \eqref{eq:augmetric} (replace $\X$ by $\Y$ and recall that $\X\subset\Y$) and augment the measures $\mu_1,\ldots,\mu_J$ to $\tilde{\mu}_1,\ldots,\tilde{\mu}_J$ where $\tilde{\mu}_i=\mu_i+\sum_{j\neq i} \mathbb{M}(\mu_j) \delta_\dum$ for $1\leq i\leq J$. In particular, $\mathbb{M}(\tilde{\mu}_i)=\sum_{j=1}^J \mathbb{M}(\mu_j)$ and we can define the \emph{augmented $p$-Fr\'echet functional} 
\begin{align*}
\tilde{F}_{p,C}(\mu):=\frac{1}{J}\sum_{i=1}^{J}\tilde{\text{OT}}_{\tilde{d}^p_C}^p(\tilde{\mu}_i,\mu),
\end{align*}
where by definition $\tilde{F}_{p,C}$ is restricted to measures $\mu$ with mass $\mathbb{M}(\mu)=\sum_{i=1}^J \mathbb{M}(\mu_i)$.
 
\begin{lemma}\label{lem:fre_eq}
For $1\leq i \leq J$ consider measures $\mu_i\in\msrX$ and their augmented versions $\tilde{\mu}_i\coloneqq \mu_i+\sum_{j\neq i}\mathbb{M}(\mu_j)\delta_\dum$, respectively. Then it holds that
\begin{align*}
F_{p,C}(\mu)=\tilde{F}_{p,C}\left( \mu+\left( \sum_{i=1}^J \mathbb{M}(\mu_i)-\mathbb{M}(\mu)\right) \delta_{\dum} \right)
\end{align*}
for all $\mu\in\mathcal{M}_+(\Y)$ such that $\mathbb{M}(\mu)\leq \sum_{i=1}^J \mathbb{M}(\mu_i)$ and in particular 
\begin{align*}
\min_{\mu \in \mathcal{M}_+(\Y)} F_{p,C}(\mu) = \min_{\substack{\mu \in \mathcal{M}_+(\tilde{\Y}) \\ \mathbb{M}(\mu)= \sum_{i=1}^J \mathbb{M}(\mu_i)}} \tilde{F}_{p,C}(\mu).
\end{align*}
\end{lemma}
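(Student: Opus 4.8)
The plan is to deduce the identity term by term from the lift of UOT to balanced OT established in \Cref{app:lifttoOT}, and then to transfer the resulting pointwise equality of the two Fr\'echet functionals to an equality of their minima via a mass-preserving bijection.

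First I would fix the uniform mass bound $B\coloneqq\sum_{j=1}^J\mathbb{M}(\mu_j)$ and take any $\mu\in\mathcal{M}_+(\Y)$ with $\mathbb{M}(\mu)\leq B$. For each index $i$ both $\mu_i$ and $\mu$ belong to $\mathcal{M}_+^B(\Y)$, since $\mathbb{M}(\mu_i)\leq B$ holds trivially and $\mathbb{M}(\mu)\leq B$ by assumption. The key observation is that the $B$-augmentations of these two measures are exactly the objects appearing in the statement: the augmentation of $\mu_i$ equals $\mu_i+(B-\mathbb{M}(\mu_i))\delta_{\dum}=\mu_i+\sum_{j\neq i}\mathbb{M}(\mu_j)\delta_{\dum}=\tilde{\mu}_i$, while the augmentation of $\mu$ equals $\tilde{\mu}\coloneqq\mu+(B-\mathbb{M}(\mu))\delta_{\dum}$, the argument fed to $\tilde{F}_{p,C}$.

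Next, invoking the lift from \Cref{app:lifttoOT} (itself a consequence of \Cref{lem:transportC} together with \cite[Lemma 3.1]{guittet2002extended}) with first and second arguments $\mu_i$ and $\mu$, I would write for every $i$
\begin{align*}
\text{KR}_{p,C}^p(\mu_i,\mu)=\text{UOT}_{p,C}(\mu_i,\mu)=\tilde{\text{OT}}_{\tilde{d}^p_C}(\tilde{\mu}_i,\tilde{\mu}).
\end{align*}
Averaging these $J$ equalities yields, by the definition of $\tilde{F}_{p,C}$, the pointwise identity $F_{p,C}(\mu)=\tilde{F}_{p,C}(\tilde{\mu})$. It is essential here that a \emph{single} bound $B$ serves all $J$ summands simultaneously, so that the same augmented variable $\tilde{\mu}$, of total mass exactly $B$, is compared against each $\tilde{\mu}_i$.

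Finally, for the equality of minima I would combine this identity with \Cref{lem:mass_cap}, which reduces the minimization of $F_{p,C}$ over $\mathcal{M}_+(\Y)$ to the sublevel set $\{\mu:\mathbb{M}(\mu)\leq B\}$. The assignment $\mu\mapsto\tilde{\mu}=\mu+(B-\mathbb{M}(\mu))\delta_{\dum}$ is a bijection from this sublevel set onto $\{\rho\in\mathcal{M}_+(\tilde{\Y}):\mathbb{M}(\rho)=B\}$, with inverse $\rho\mapsto\rho|_{\Y}$, and it preserves functional values by the pointwise identity; hence the two minima coincide. I expect the only point requiring genuine care to be precisely this bijection together with the uniform validity of the bound $B$ across all terms, since all substantive analytic content has already been absorbed into the established lift and no further optimization argument is needed.
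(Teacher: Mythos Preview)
Your proposal is correct and follows essentially the same route as the paper. The only cosmetic difference is that the paper first lifts each term $\text{UOT}_{p,C}(\mu_i,\mu)$ with the pairwise bound $\mathbb{M}(\mu_i)+\mathbb{M}(\mu)$ and then invokes \Cref{lem:add_mass} to push all total masses up to $\sum_j\mathbb{M}(\mu_j)$, whereas you take the uniform bound $B=\sum_j\mathbb{M}(\mu_j)$ from the outset; your bijection argument for the equality of minima is likewise equivalent to the paper's pair of inequalities obtained via restriction $\rho\mapsto\rho|_{\Y}$.
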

The proof of this Lemma is given in \Cref{sec:proofpac}.

\begin{remark}[Optimal $(p,C)$-barycenters]
\Cref{lem:fre_eq} states that the optimal objective value for the $(p,C)$-barycenter is equal the related $p$-Wasserstein barycenter problem on the augmented space. In particular, the proof also reveals that if $\tilde{\mu}^\star$ is a $p$-Wasserstein barycenter for the augmented measures $\tilde{\mu}_1,\ldots,\tilde{\mu}_J$ then $\mu^\star\coloneqq\tilde{\mu}^\star-\tilde{\mu}^\star(\dum)\delta_\dum$ is a $(p,C)$-barycenter for the measures $\mu_1,\ldots,\mu_J$. Vice versa, if $\mu^\star$ is a $(p,C)$-barycenter for the measures $\mu_1,\ldots,\mu_J$ then $\tilde{\mu}^\star\coloneqq \mu^\star+\left( \sum_{i=1}^J \mathbb{M}(\mu_i)-\mathbb{M}(\mu^\star)\right)\delta_\dum$ is a $p$-Wasserstein barycenter for the augmented measures $\tilde{\mu}_1,\ldots,\tilde{\mu}_J$.
\end{remark}

\subsection{A Lift to Multi-Marginal Optimal Transport}\label{app:multimarginal}

On the augmented space $\tilde{\Y}\coloneqq \Y\cup \{\dum\}$ equipped with metric $\tilde{d}_C$ in \eqref{eq:augmetric}, we define for $p\geq 1$ and $J\in\mathbb{N}$ a Borel barycenter application $\tilde{T}^{J,p}_C\colon \tilde{\Y}^J\to \tilde{\Y}$ that takes as input $(y_1,\ldots,y_J)\in\tilde{\Y}$ and outputs any minimizer $y\in\tilde{\Y}$ of the function
\begin{align*}
f(y)=\sum_{i=1}^J \tilde{d}_C^p(y_i,y).
\end{align*} 
Of particular interest to us is the barycentric application restricted to inputs from $\tilde{\X}$. However, we collect some of its key properties for general input $(y_1,\ldots,y_J)\in\tilde{\Y}^J$. For this, we define the index set
\begin{align*}
\BS(y_1,\ldots,y_J)\coloneqq \left\lbrace i\, \mid \, y_i = \dum,\, 1\leq i \leq J\right\rbrace.
\end{align*}
If clear from the context, then the dependence on $y_1,\dots,y_J$ is suppressed and the set is simply denoted as $\BS$.
\begin{lemma}\label{lem:borelapplication_prop}
Fix some parameter $C>0$ and consider the space $\tilde{\Y}$ with metric $\tilde{d}_C$ as defined in \eqref{eq:augmetric}. For points $(y_1,\ldots,y_J)\in\tilde{\Y}^J$ it holds that
\begin{itemize}
\item[(i)] $\tilde{T}^{J,p}_C(y_1,\dots,y_J)=\dum$ if and only if $\es{i \not \in \BS}{} \tilde{d}_C^p(y_i,y) \geq (J-2\lvert \BS\rvert )C^p/2$ for any $y\in \tilde{\Y}$. In particular, if strict inequality holds then $\tilde{T}^{J,p}_C(y_1,\dots,y_J)=\dum$ is unique.
\item[(ii)] If $2\lvert \BS \rvert \geq J$ then it holds $\tilde{T}^{J,p}(y_1,\dots,y_J)=\dum$ with uniqueness if $2\lvert \BS \rvert > J$.
\item[(iii)] If $\tilde{T}^{J,p}(y_1,\dots,y_J)\neq \dum$ then it holds
\[
\tilde{T}^{J,p}_C(y_1,\dots,y_J)=\uamin{y\in \mathcal{Y}} \es{i \not \in \BS}{} \tilde{d}^p_C(y_i,y).
\]
\item[(iv)] If $C>2^{\frac{1}{p}}\text{diam}(\mathcal{Y})$, then for any points $y_1,\ldots,y_J\in\Y$ with  $\lvert \BS \rvert=0$ it holds that $\tilde{T}^{J,p}_C(y_1,\dots,y_J)=T^{J,p}(y_1,\dots,y_J)$ where the latter one is defined with respect to the usual metric $d^p$ on $\Y$.
\end{itemize}
\end{lemma}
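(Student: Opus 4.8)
The plan is to reduce everything to an explicit comparison of the objective $f(y)=\sum_{i=1}^J \tilde{d}_C^p(y_i,y)$ at the two relevant candidate minimizers: the dummy point $\dum$ and an arbitrary point $y\in\Y$. First I would evaluate $f$ at these points by splitting the sum according to whether $i\in\BS$ and inserting the definition of $\tilde{d}_C$ from \eqref{eq:augmetric}. For the dummy point, indices in $\BS$ contribute $0$ and the rest contribute $C^p/2$ each, giving $f(\dum)=(J-\lvert\BS\rvert)C^p/2$. For $y\in\Y$, each index in $\BS$ now contributes $C^p/2$, giving $f(y)=\lvert\BS\rvert C^p/2+\sum_{i\notin\BS}\tilde{d}_C^p(y_i,y)$. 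These two closed-form evaluations are the only real computations; the remaining parts are bookkeeping built on them.

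For (i), since $\dum$ is always an admissible point, $\tilde{T}^{J,p}_C=\dum$ holds exactly when $f(\dum)\leq f(y)$ for every $y\in\Y$. Subtracting $\lvert\BS\rvert C^p/2$ from both sides and rearranging yields precisely $\sum_{i\notin\BS}\tilde{d}_C^p(y_i,y)\geq (J-2\lvert\BS\rvert)C^p/2$; evaluating the same condition at $y=\dum$ is vacuous, so requiring it for all $y\in\tilde{\Y}$ is equivalent to requiring it on $\Y$. If the inequality is strict for all $y$, then $f(\dum)<f(y)$ everywhere, giving uniqueness. Statement (ii) is then immediate from (i): when $2\lvert\BS\rvert\geq J$ the right-hand side $(J-2\lvert\BS\rvert)C^p/2$ is nonpositive while the left-hand side is a sum of nonnegative terms, so the criterion holds automatically; when $2\lvert\BS\rvert>J$ it is strictly negative, yielding the strict inequality and hence uniqueness. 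Statement (iii) follows because, once the minimizer is known to lie in $\Y$, the additive constant $\lvert\BS\rvert C^p/2$ in the expression for $f(y)$ is irrelevant to the minimization, so minimizing $f$ over $\Y$ coincides with minimizing $\sum_{i\notin\BS}\tilde{d}_C^p(y_i,y)$.

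For (iv) I would combine the earlier parts under the hypotheses $C>2^{1/p}\diam(\Y)$ and $\lvert\BS\rvert=0$. The key observation is that $C^p>2\diam(\Y)^p$ forces $d^p(y_i,y)\wedge C^p=d^p(y_i,y)$ for all $y_i,y\in\Y$, so $\tilde{d}_C^p$ agrees with $d^p$ on $\Y$ and the truncation is inactive. Using this, $f(y)=\sum_{i=1}^J d^p(y_i,y)\leq J\diam(\Y)^p<J C^p/2=f(\dum)$ for every $y\in\Y$, so by (i) the dummy point is not a minimizer. Part (iii) then identifies $\tilde{T}^{J,p}_C(y_1,\dots,y_J)$ with the minimizer over $\Y$ of $\sum_{i=1}^J\tilde{d}_C^p(y_i,y)=\sum_{i=1}^J d^p(y_i,y)$, which is exactly $T^{J,p}(y_1,\dots,y_J)$.

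No serious obstacle is anticipated: the whole argument is driven by the two explicit evaluations of $f$, and the monotonicity/truncation facts feed directly into them. The only points demanding care are the precise handling of the strict-versus-weak inequalities that govern the uniqueness claims, and checking that the condition at $y=\dum$ in (i) is automatically satisfied so that quantifying over all of $\tilde{\Y}$ rather than $\Y$ introduces no discrepancy.
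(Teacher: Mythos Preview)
Your proposal is correct and follows essentially the same approach as the paper: both arguments hinge on the explicit evaluations $f(\dum)=(J-\lvert\BS\rvert)C^p/2$ and $f(y)=\lvert\BS\rvert C^p/2+\sum_{i\notin\BS}\tilde{d}_C^p(y_i,y)$ for $y\in\Y$, then derive each part by direct comparison. Your treatment of (iv) is slightly more explicit than the paper's in noting that the truncation $d^p\wedge C^p$ is inactive so that $\tilde{d}_C^p$ coincides with $d^p$ on $\Y$, but this is only a presentational difference.
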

A proof of this result is provided in \Cref{sec:proofpac}. \Cref{lem:borelapplication_prop} allows to characterize the centroid sets of the augmented measures $\tilde{\mu}_1,\ldots,\tilde{\mu}_J$ defined as 
\begin{align}\label{eq:augmentedcentroid}
\begin{split}
    \tilde{\mathcal{C}}_{KR}(J,p,C):=\Big\{ y \in \ty \ \vert \ &y=\tilde{T}_C^{J,p}(x_1,\dots ,x_J), x_i \in \text{supp}(\tilde{\mu}_i);\\ &d^p(y,x_i)\leq C^p \ \forall \ x_i\neq \dum  \Big\}.
\end{split}
\end{align}

\begin{remark} \label{rem:truncatedapplication}
We point out that computing $\tilde{T}^{J,p}_C$ is in general a difficult optimisation problem. While for squared euclidean distance, computing the barycentric application simply amounts to taking the mean of the $x_i$, even on the non-augmented space, there are no closed form solutions available for most choices of distances and values of $p$. This problem is exacerbated by the truncation of the distance $\tilde{d}$ at $C^p$ \citep[as also pointed out in][]{muller2020metrics}, since it implies that disregarding a certain subset of points and just computing the barycenter with respect to the remaining $x_i$ might in fact be optimal. However, initially it is not clear which $x_i$ to choose, turning this into a difficult combinatorial problem. 
\end{remark}
Recall that for any measure $\mu$ its support is contained in $\X$ a subset of $\Y$. The augmented measure $\tilde{\mu}$ is extended by an additional support point at $\{\dum\}$. In particular, while the centroid set is a subset of $\tilde{\Y}$ it only depends on the support of the measures $\tilde{\mu}_i$ contained in $\tilde{\X}\coloneqq \X \cup\{\dum\}$.

\begin{corollary}\label{cor:centroid_sets}
For the centroid sets of the augmented measures $\tilde{\mu}_i\in\mathcal{M}_+(\tilde{\X})$ with $1\leq i \leq J$ it holds 
\begin{align*}
\tilde{\mathcal{C}}_{KR}(J,p,C) \subset \mathcal{C}_{KR}(J,p,C) \cup \{\dum\} \subset \mathcal{C}_{KR}(J,p) \cup \{\dum\}.
\end{align*}
\end{corollary}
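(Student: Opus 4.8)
The plan is to prove the corollary by directly analyzing the augmented centroid set $\tilde{\mathcal{C}}_{KR}(J,p,C)$ defined in \eqref{eq:augmentedcentroid} and comparing it element-by-element against the definitions in \eqref{eq:KRfullcentroid} and \eqref{eq:KRCentroid}. The second inclusion, $\mathcal{C}_{KR}(J,p,C)\cup\{\dum\}\subseteq\mathcal{C}_{KR}(J,p)\cup\{\dum\}$, is immediate since $\mathcal{C}_{KR}(J,p,C)\subseteq\mathcal{C}_{KR}(J,p)$ by construction (the restricted centroid set is carved out of the full centroid set by additional constraints). So the real content is the first inclusion. I would take an arbitrary $y\in\tilde{\mathcal{C}}_{KR}(J,p,C)$ with $y\neq\dum$ and show $y\in\mathcal{C}_{KR}(J,p,C)$; the case $y=\dum$ lands trivially in the right-hand side.

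First I would unpack what $y=\tilde{T}_C^{J,p}(x_1,\dots,x_J)\neq\dum$ means via \Cref{lem:borelapplication_prop}. Writing $\BS=\BS(x_1,\dots,x_J)$ for the set of indices $i$ with $x_i=\dum$, part (ii) of that lemma forces $2\lvert\BS\rvert<J$, equivalently $L\coloneqq J-\lvert\BS\rvert>J/2$, so $L\geq\lceil J/2\rceil$. Relabeling the non-dummy indices as $(i_1,\dots,i_L)$ with genuine support points $x_{i_l}\in\text{supp}(\mu_{i_l})\subseteq\X$, part (iii) tells me that $y$ minimizes $\sum_{i\notin\BS}\tilde{d}_C^p(x_i,y)$ over $y\in\Y$. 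Here I need the observation that for $y\in\Y$ and $x_{i_l}\in\X$ with $d^p(x_{i_l},y)\leq C^p$, the truncated cost $\tilde{d}_C^p$ agrees with $d^p$; since the membership condition in \eqref{eq:augmentedcentroid} guarantees $d^p(y,x_i)\leq C^p$ for all non-dummy $x_i$, the minimization over the truncated costs coincides with minimizing $\sum_{l=1}^L d^p(x_{i_l},y)$, so $y=T^{L,p}(x_{i_1},\dots,x_{i_L})$ in the sense of the original metric. This places $y\in\mathcal{C}_{KR}(J,p)$.

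It then remains to verify the two restricting inequalities in \eqref{eq:KRCentroid}. The individual bound $d^p(x_{i_l},y)\leq C^p$ is exactly the membership condition carried over from \eqref{eq:augmentedcentroid}. For the joint bound $\sum_{l=1}^L d^p(x_{i_l},y)\leq C^p(2L-J)/2$, I would invoke \Cref{lem:borelapplication_prop}(i): since $\tilde{T}_C^{J,p}(x_1,\dots,x_J)=y\neq\dum$, the dummy point is \emph{not} a minimizer, which by the characterization in (i) yields $\sum_{i\notin\BS}\tilde{d}_C^p(x_i,y)\leq(J-2\lvert\BS\rvert)C^p/2$. Substituting $\lvert\BS\rvert=J-L$ gives $J-2\lvert\BS\rvert=2L-J$, and using again that the truncated and untruncated costs agree on these points, this is precisely $\sum_{l=1}^L d^p(x_{i_l},y)\leq C^p(2L-J)/2$. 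Hence $y\in\mathcal{C}_{KR}(J,p,C)$, completing the inclusion.

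The main obstacle I anticipate is the careful bookkeeping around the truncation of $\tilde{d}_C$ at $C^p$ and ensuring the equivalence between minimizing truncated versus untruncated costs is genuinely justified by the membership constraint $d^p(y,x_i)\leq C^p$ rather than assumed. One must be attentive that \Cref{lem:borelapplication_prop}(iii) produces a minimizer over the \emph{truncated} functional, whereas $T^{L,p}$ is defined with respect to $d^p$; the reconciliation hinges on the distance cap constraint built into \eqref{eq:augmentedcentroid}, and on confirming that this constraint is consistent with the argmin being attained at a point where no truncation is active. The remaining steps are essentially direct translations between the index conventions ($L$ versus $\lvert\BS\rvert$) and should be routine once the truncation issue is handled cleanly.
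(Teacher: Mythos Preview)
Your proposal is correct and follows essentially the same route as the paper, which simply cites parts (i) and (iii) of \Cref{lem:borelapplication_prop} together with the identity $\lvert\BS\rvert=J-L$; you have unpacked those citations in full and additionally flagged the truncation subtlety (passing from the $\tilde d_C^p$-minimizer to a $d^p$-minimizer) that the paper's two-line proof leaves implicit. For that last point, the clean justification is: writing $g=\sum_{l}\tilde d_C^p(x_{i_l},\cdot)$ and $f=\sum_{l}d^p(x_{i_l},\cdot)$, one has $g\leq f$ everywhere and, by the membership constraint in \eqref{eq:augmentedcentroid}, $g(y)=f(y)$; hence $f(y)=g(y)\leq g(y')\leq f(y')$ for all $y'\in\Y$, so $y$ is indeed a valid choice of $T^{L,p}(x_{i_1},\dots,x_{i_L})$.
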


\begin{proof}
The first inclusion follows by statements (i) and (iii) in \Cref{lem:borelapplication_prop} and the observation that $\lvert \BS \rvert=J-L$. The second by applying $\mathcal{C}_{KR}(J,p,C)\subset \mathcal{C}_{KR}(J,p)$.
\end{proof}
\begin{remark}
One could define $\mathcal{C}_{KR}(J,p,C)$ in terms of $\tilde{d}_C$ instead of $d$ to obtain equality in the first inclusion. Replacing $T^{L,p}$ by $\tilde{T}^{L,p}_C$ in the definition of the centroid set would not alter any of the related proofs and yield slightly sharper control on the support of $(p,C)$-barycenter. However, as we consider the given definition to be more intuitive, we omit this improvement in the statement of the theorem. 
\end{remark}
Let $\Pi(\tilde{\mu}_1,\dots ,\tilde{\mu}_J)$ be the set of measures on $\tilde{\Y}\times\ldots\times\tilde{\Y}$ whose $i$-th marginal is equal to $\tilde{\mu}_i$ for all $1\leq i \leq J$. We refer to the elements of this set as \emph{multi-couplings} of $\tilde{\mu}_1,\dots ,\tilde{\mu}_J$. For $p\geq 1$ define the \emph{augmented multi-marginal transport problem} as 
\begin{align}\label{eq:balancedmultimarginal}
\min_{\pi\in \Pi(\tilde{\mu}_1,\dots ,\tilde{\mu}_J)} \int_{\tilde{\mathcal{Y}}^J}c_{p,C}(y_1,\dots ,y_J)\pi(dy_1,\dots ,dy_J),
\end{align}
where 
\begin{align*}
c_{p,C}(y_1,\dots,y_J)\coloneqq\frac{1}{J}\easysum{i=1}{J}\tilde{d}^p_C\left(y_i,\tilde{T}_C^{J,p}(y_1,\dots,y_J)\right).
\end{align*}

The relation between the augmented multi-marginal transport formulation \eqref{eq:balancedmultimarginal} and the $(p,C)$-barycenter is as follows.

\begin{proposition}\label{prop:multimarginal}
Let $\mu_1,\dots ,\mu_J \in \msrX$ and $\tilde{\mu}_1,\ldots,\tilde{\mu}_J$ be their augmented counterparts. If $\pi\in \Pi(\tilde{\mu}_1,\dots ,\tilde{\mu}_J)$ is a solution to the augmented multi-marginal problem \eqref{eq:balancedmultimarginal}, then the measure $\mu^\star\coloneqq(\tilde{T}_C^{J,p} \# \pi)_{\vert \mathcal{Y}}\in\msrY$ is a $(p,C)$-barycenter of the measures $\mu_1,\dots ,\mu_J$, where $\tilde{T}_C^{J,p} \# \pi$ denotes the pushforward of $\pi$ under $\tilde{T}_C^{J,p}$.  Moreover, for every $(p,C)$-barycenter $\mu^\star$, there exists a solution $\pi$ to the augmented multi-marginal transport problem, such that 
\begin{align*}
\mu^\star+\left( \sum_{i=1}^J \mathbb{M}(\mu_i)-\mathbb{M}(\mu^\star)\right) \delta_{\dum}=\tilde{T}_C^{J,p} \# \pi.
\end{align*}
In particular, it holds that
\begin{align*}
\min_{\pi\in \Pi(\tilde{\mu}_1,\dots ,\tilde{\mu}_J)} \int_{\tilde{\mathcal{Y}}^J}c_{p,C}(y_1,\dots ,y_J)\pi(dy_1,\dots ,dy_J)=\inf_{\mu \in \msrY} F_{p,C}(\mu).
\end{align*}
\label{mm_eqv_expl}
\end{proposition}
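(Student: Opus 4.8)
The plan is to establish the equivalence between the augmented multi-marginal transport problem \eqref{eq:balancedmultimarginal} and the $(p,C)$-barycenter problem by exploiting the lift to a $p$-Wasserstein barycenter provided in \Cref{lem:fre_eq}. By that lemma, it suffices to show that the multi-marginal problem is equivalent to the augmented Wasserstein barycenter problem $\min_{\mathbb{M}(\mu)=\sum_i \mathbb{M}(\mu_i)} \tilde{F}_{p,C}(\mu)$ on the space $\tilde{\Y}$, since the latter is already known to coincide with the original $(p,C)$-barycenter problem. Thus the core of the argument reduces to the classical duality between a (balanced) Wasserstein barycenter and its multi-marginal reformulation, which I would adapt to the augmented setting.

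First I would prove the inequality $\inf_\mu \tilde{F}_{p,C}(\mu) \leq \text{(multi-marginal value)}$. Given any optimal multi-coupling $\pi \in \Pi(\tilde{\mu}_1,\ldots,\tilde{\mu}_J)$, define the candidate measure $\tilde{\mu}^\star \coloneqq \tilde{T}_C^{J,p}\#\pi$. By disintegrating $\pi$ and using that, for each tuple $(y_1,\ldots,y_J)$, the point $\tilde{T}_C^{J,p}(y_1,\ldots,y_J)$ minimizes $\sum_i \tilde{d}_C^p(y_i,\cdot)$, I would build transport plans from $\tilde{\mu}^\star$ to each $\tilde{\mu}_i$ by pushing $\pi$ through the pair map $(y_1,\ldots,y_J)\mapsto(\tilde{T}_C^{J,p}(y_1,\ldots,y_J),y_i)$. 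These plans have the correct marginals (the $i$-th marginal being $\tilde{\mu}_i$, and the other being $\tilde{\mu}^\star$), so they are feasible couplings, giving $\tilde{\text{OT}}_{\tilde{d}_C^p}^p(\tilde{\mu}_i,\tilde{\mu}^\star)\leq \int \tilde{d}_C^p(y_i,\tilde{T}_C^{J,p}(y_1,\ldots,y_J))\,d\pi$. Averaging over $i$ yields $\tilde{F}_{p,C}(\tilde{\mu}^\star)\leq \int c_{p,C}\,d\pi$, the multi-marginal value. Then $\mu^\star \coloneqq (\tilde{T}_C^{J,p}\#\pi)_{\vert \mathcal{Y}}$ is the corresponding $(p,C)$-barycenter via the remark following \Cref{lem:fre_eq}.

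For the reverse inequality I would start from any $(p,C)$-barycenter $\mu^\star$, form its augmentation $\tilde{\mu}^\star$ (which is a $p$-Wasserstein barycenter of the $\tilde{\mu}_i$ by the same remark), and take optimal OT plans $\gamma_i \in \Pi_=(\tilde{\mu}^\star,\tilde{\mu}_i)$. The standard glueing lemma lets me assemble these $J$ bivariate plans, all sharing the first marginal $\tilde{\mu}^\star$, into a single measure $\gamma$ on $\tilde{\Y}\times\tilde{\Y}^J$ whose $\tilde{\Y}^J$-marginal $\pi$ lies in $\Pi(\tilde{\mu}_1,\ldots,\tilde{\mu}_J)$. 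The cost of $\pi$ is controlled by noting that for each fixed first coordinate $y$ the point $y$ is one admissible value for the barycentric application, so $\sum_i \tilde{d}_C^p(y_i,\tilde{T}_C^{J,p}(y_1,\ldots,y_J))\leq \sum_i \tilde{d}_C^p(y_i,y)$; integrating this against $\gamma$ gives $\int c_{p,C}\,d\pi \leq \tilde{F}_{p,C}(\tilde{\mu}^\star)$, which equals the optimal barycenter value. Combined with the first direction, this yields equality of the two optimal values and the pushforward identity for $\tilde{\mu}^\star$.

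The main obstacle I anticipate is handling the dummy point $\dum$ consistently throughout the glueing and pushforward constructions. The truncation of $\tilde{d}_C$ at $C^p$ and the behaviour of $\tilde{T}_C^{J,p}$ when several arguments equal $\dum$ (governed by \Cref{lem:borelapplication_prop}) mean I must verify that the mass routed through $\dum$ in the multi-coupling matches the mass deficit $\sum_i \mathbb{M}(\mu_i)-\mathbb{M}(\mu^\star)$ in the barycenter, so that restricting to $\Y$ via $(\cdot)_{\vert\mathcal{Y}}$ recovers exactly $\mu^\star$. I would address this by invoking \Cref{lem:borelapplication_prop}(i)--(iii): the barycentric application returns $\dum$ on precisely those tuples where destroying mass is cheaper than transporting it, which is exactly the mechanism by which \eqref{eq:UOT} relaxes the marginal constraints. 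Checking measurability of $\tilde{T}_C^{J,p}$ (needed for the pushforward to be well-defined) is routine in the finite-support setting, since all measures involved are finitely supported and the optimization defining the barycentric application ranges over a finite candidate set.
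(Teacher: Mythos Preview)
Your proposal is correct and is precisely the approach the paper has in mind: the paper does not spell out a proof but simply notes that it ``follows straightforwardly along the lines of related statements for the multi-marginal optimal transport problem'' in \cite{le2017existence,masarotto2019procrustes,panaretos2020invitation}, which is exactly the two-sided argument (pushforward of an optimal multi-coupling in one direction, glueing of optimal OT plans in the other) that you outline, combined with \Cref{lem:fre_eq} to pass between the augmented and original problems. Your identification of the dummy-point bookkeeping as the only nontrivial adaptation, to be handled via \Cref{lem:borelapplication_prop}, is also in line with how the paper uses this proposition downstream; the residual non-uniqueness of $\tilde{T}_C^{J,p}$ is resolved, as in the paper, by fixing a choice of representative on the finitely many tuples in the support of $\pi$.
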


The proof follows straightforwardly along the lines of related statements for the multi-marginal optimal transport problem (\citealp[Theorem 8]{le2017existence}; \citealp[Lemma 8]{masarotto2019procrustes} or \citealp[Proposition 3.1.2]{panaretos2020invitation}). This correspondence between the $(p,C)$-barycenter problem and a balanced multi-marginal optimal transport serves as one of the key components in the proof of \Cref{thm:bary_prop}.

\section{Computational Issues and Numerical Experiments}\label{sec:ComputationsSimulations}
We present approaches to compute the $(p,C)$-barycenter problem by solving related OT problems. Based on this, we investigate the performance of the Wasserstein and $(p,C)$-barycenters on multiple synthetic datasets. For reference, we also report on results for two related concepts of unbalanced barycenters (UBCs), namely the Gaussian-Hellinger-Kantorovich and Wasserstein-Fisher-Rao barycenter.
\subsection{Algorithms}\label{subsec:algos}
\Cref{thm:bary_prop} and \Cref{prop:multimarginal} both allow to pose the augmented problem (recall \Cref{sec:pac}) as a linear program and using \Cref{lem:fre_eq} one can obtain a solution to the original problem by solving the augmented one. Using any linear program solver this enables the direct computation of an exact solution of this problem. However, the number of variables in this approach scales as the size of $\mathcal{C}_{KR}(J,p,C)$ and hence it turns out to be infeasible already for relatively small instance sizes. To compute $(p,C)$-barycenters at larger scales we revisit iterative methods to solve the (balanced) Wasserstein barycenter problem and give instructions how to use modifications of them to compute $(p,C)$-barycenters. In particular, we detail a multi-scale method which solves successive fixed-support $(p,C)$-barycenter LPs on increasingly refined support sets. This provides a meta-framework to adjust state-of-the-art solvers for the Wasserstein barycenter for $(p,C)$-barycenter computations.\\
To construct the augmented problem we add the dummy point $\dum$ to the support of the $\mu_i$'s, while setting its distance to all other locations to be $C^p/2$. Note, that by \Cref{lem:transportC} and \Cref{lem:add_mass} the truncation of $\tilde{d}$ at $C^p$ can be omitted if $\mathbb{M}(\tilde{\mu}_i)>3\max_{i=1,\dots ,J} \mathbb{M}(\mu_i)$. If this is not the case, we can enforce it by adding additional mass at $\dum$ in all augmented measures without changing the optimal value. \\
\subsubsection{LP-Formulation for the $\mathbf{(p,C)}$-Barycenter}
\label{app:sec_LP}
Using property (i) from \Cref{thm:bary_prop}, we can rewrite the augmented $(p,C)$-barycenter problem as a linear program similarly to the usual $p$-Wasserstein barycenter problem \eqref{eq:otbarycenter}. However, compared to the latter one, we replace the standard centroid set $\mathcal{C}_W(J,p)$ from \eqref{eq:WassersteinCentroid}, by the centroid set $\tilde{\mathcal{C}}_{KR}(J,p,C)$ of the augmented measures from \eqref{eq:augmentedcentroid}. This yields
\begin{alignat*}{3}
\label{eq:linProgBary}
\underset{\pi^{(1)},\dots,\pi^{(J)},a}{\min} \quad & \frac{1}{J} \easysum{i=1}{J}\easysum{j=1}{\lvert \tilde{\mathcal{C}}_{KR}(J,p,C) \rvert}&&\easysum{k=1}{M_i}\pi^{(i)}_{jk}c_{jk}^i \\
\text{s.t.} \quad \easysum{k=1}{M_i}\pi^{(i)}_{jk}&=a_j, &&\forall \ i=1,\dots ,J, \forall j=1,\dots,\lvert \tilde{\mathcal{C}}_{KR}(J,p,C) \rvert, \\
\easysum{j=1}{\lvert \tilde{\mathcal{C}}_{KR}(J,p,C) \rvert}\pi^{(i)}_{jk}&=b^i_k, &&\forall \ i=1,\dots ,J , \forall k=1,\dots ,M_i, \\
\pi^{(i)}_{jk}&\geq 0  &&\forall i=1,\dots ,J,  \forall j=1,\dots ,\lvert \tilde{\mathcal{C}}_{KR}(J,p,C)\rvert , \\& &&\forall k=1,\dots ,M_i,
\end{alignat*}
where $M_i=\lvert \tilde{\mathcal{X}}_i \rvert$ is the cardinality of the support of the augmented measure $\tilde{\mu}_i$. Here, $c^i_{jk}$ denotes the distance between the $j$-th point of $\lvert \tilde{\mathcal{C}}_{KR}(J,p,C) \rvert$ and the $k$-th point in the support of $\tilde{mu}_i$, while $b^i$ is the vector of masses corresponding to $\tilde{\mu}_i$. For practical purposes it may be advantageous to solve the multi-marginal problem instead of the $(p,C)$-barycenter problem. This changes the number of variables from $\lvert \tilde{\mathcal{C}}_{KR}(J,p,C) \rvert (1+\sum_{i=1}^J M_i$) to $\prod_{i=1}^J M_i$ and the number of constraints from $J\lvert \tilde{\mathcal{C}}_{KR}(J,p,C) \rvert +\sum_{i=1}^J M_i$ to $\sum_{i=1}^J M_i$. Depending on the value of $C$, and hence the cardinality of $\tilde{\mathcal{C}}_{KR}(J,p,C)$, it is possible to pick the problem with the smaller complexity. \\
While this formulation is appealing for proving theoretical statements as provided in \Cref{thm:bary_prop}, it quickly becomes computationally infeasible even for small scale problems as the number of variables in the LP grows potentially as $\prod M_i$. However, it still enables exact computations of $(p,C)$-barycenters for small scale examples, which is currently impossible for general UBCs. Though, while there has been some recent advancement for the $2$-Wasserstein barycenter in special cases \citep{altschuler2021wasserstein} these LP-based algorithms ultimately do not scale to large instance sizes.
\subsection{Iterative Algorithms and the Multi-Scale Approach}
\begin{figure}
    \centering
    \includegraphics[width=\textwidth]{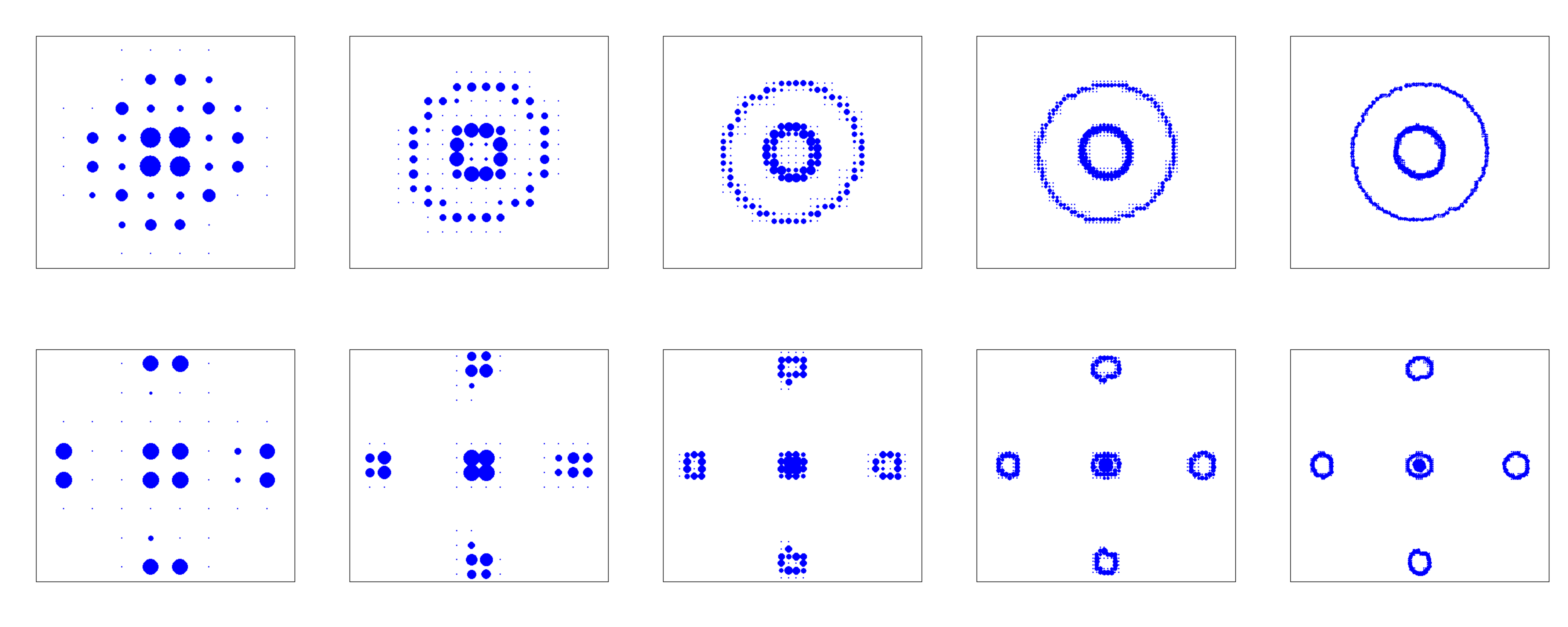}
    \caption{An illustration of the multi-scale approach on two different datasets. The fixed-support solutions are shown on grids of the sizes $8 \times 8$, $16\times 16$, $32\times 32$, $64\times 64$ and $128 \times 128$ increasing from left to right. \textcolor{mygreen}{The corresponding run-times on a single core of an Intel Core $i7$ $12700K$ in the first/second row were $2.5/5$ seconds, $14/16$ seconds, $145/42$ seconds, $13/3$ minutes and $143/22$ minutes.} \textbf{Top:} The dataset of nested ellipses from \Cref{fig:nested_ell}. \textbf{Bottom:} The dataset of ellipses with clustered support structure from \Cref{fig:cluster_ellipses}.}
    \label{fig:ms_illus}
\end{figure}
For the Wasserstein barycenter, iterative methods computing approximate barycenters, with a per iterations complexity only linear in the number of measures, enjoy great popularity. Most well known is the \emph{fixed-support Wasserstein barycenter} \citep{ge2019interior,lin2020fixed,xie2020fast} approach, aiming to find the best approximation of the barycenter on a pre-specified support set, for which a variety of methods is available. We utilise this fixed-support approach for the augmented $(p,C)$-barycenter problem by adding the dummy point $\dum$ to the given support and constructing the cost as described above. This yields a meta-framework which allows to employ fixed-support Wasserstein barycenter algorithms for fixed-support $(p,C)$-barycenter computation. One can also modify more general \emph{free support} methods \citep{cuturi2014fast,ge2019interior,luise2019sinkhorn}, which usually alternate between updating the support set of the barycenter and its weights on this set, to provide approximate $(p,C)$-barycenters. However, the necessary position updates usually explicitly or implicitly rely on being able to compute the barycentric application $\tilde{T}^{J,p}$ efficiently. Recalling \Cref{rem:truncatedapplication}, this is in general not tractable for the augmented problem, which severely hinders the use of these approaches. Thus, it is tempting to avoid these issues by approximating $\Y$ with a large finite space, i.e., by taking a grid of high-resolution, and solving the fixed support $(p,C)$-barycenter problem on this set. However, solving the fixed-support problem on this large space requires significant computational effort. We advovate an alternative by adapting the ideas of multi-scale methods for the Wasserstein distance/barycenter \citep{merigot2011multiscale,gerber2017multiscale,schmitzer2019stabilized} to the $(p,C)$-barycenter setting. The idea of this approach is to start with a coarse version of the problem and then successively solve refined problems, while using the knowledge of the coarse solution to reduce the complexity of the finer ones. \\
Thus, we initialise the support set of the barycenter as a fixed grid of size $K_1 \times \dots \times K_d$ in $\mathbb{R}^d$. In the $j$-th step of the algorithm, after solving the fixed-support problem, we remove the grid points which have zero mass and replace the remaining ones with its $2^d$ closest points in a refined version of the original grid of size $2^jK_1 \times \dots \times 2^jK_d$. This can be understood as solving the fixed-support problem on successively finer grids, while  incorporating information provided by having already solved a coarser solution of the problem. We terminate the method once a pre-specified resolution has been reached. This allows to obtain fixed-support approximation of the $(p,C)$-barycenter on fine grids without having to optimise over the full support set. \\
We point out that this approach, while inspired by multi-scale approaches is more closely related to the formerly mentioned free-support methods. As such it does in general not yield a globally optimal fixed-support $(p,C)$-barycenter at the finest resolution. Instead it converges to a local minimum of the unbalanced Fr\'echet functional depending on the resolution of the initial grid. This is a common problem among alternating procedures for the free-support barycenter problem and can be attributed to the fact that the Fr\'echet functional is non-convex in the support locations of the measures. However, we stress that with this approach we observe reasonable approximations of the $(p,C)$-barycenter while avoiding the inherent problems of generalising usual position update procedures discussed above. In particular, we do not have to solve the $\tilde{T^{J,p}_C}$ barycenter problem at any point. Additionally, we note that the initial grid size should be chosen at least fine enough that the distance between two adjacent grid points is smaller than $C$. Otherwise it is possible that support points lying between two grid points, having distance larger $C$ to both, are not accounted for. For a visual illustration of the algorithm we refer to \Cref{fig:ms_illus}.
\subsection{Synthetic Data Simulations}
We test the performance of the $(p,C)$-barycenter as a data analytic tool compared to the usual $p$-Wasserstein barycenter on a multitude of datasets. We base our computations on the MAAIPM method \citep{ge2019interior}, which allows for high-precision approximations of barycenters up to moderate data sizes. The algorithm has been deployed to solve the fixed-support $(p,C)$-barycenter problems arising in the multi-scale method detailed above. \textcolor{myred}{For all experiments, the initial grid size as been set to $16\times 16$ and the refinement is terminated at a gridsize of $128\times 128$. Values below $10^{-5}$ have been considered as zero for the purposes of grid refinement. All experiments have been carried out on a single core of an Intel Core $i7$ $12700K$.} Implementations of our used method and some alternatives can be found as part of the R-package \emph{WSGeometry} (on CRAN).
\begin{figure}
\centering
\includegraphics[width=\textwidth]{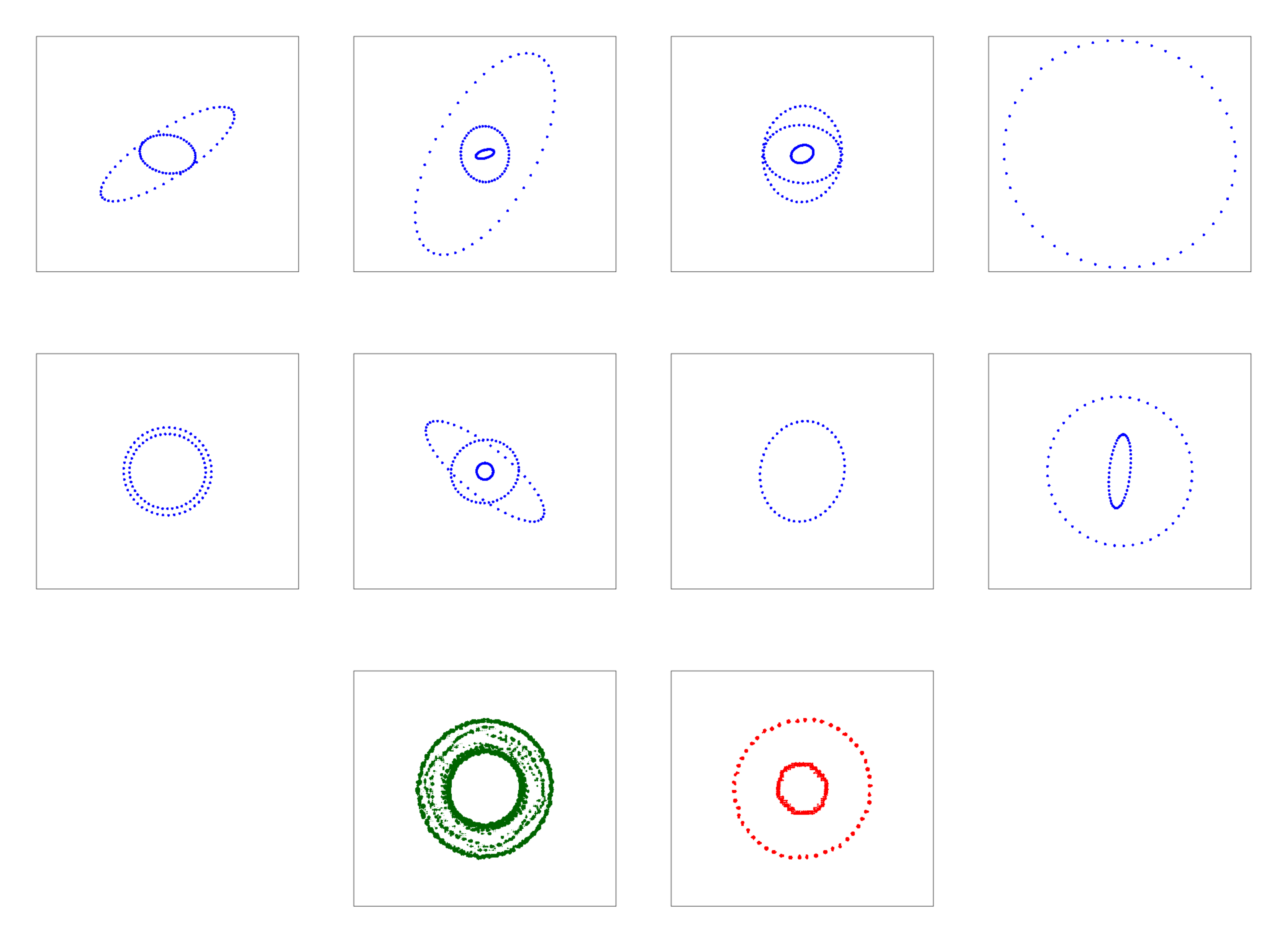}
\caption{An excerpt of a dataset of $N=100$ discretized ellipses. Each measure contains between $1$ and $3$ ellipses with equal probability. Each ellipse consists of $50$ points with mass $1$ in $[0,1]^2$. \textbf{Left:} In darkgreen the $2$-Wasserstein barycenter, where all measures are normalized to be probability measures \textcolor{myred}{(runtime about 8 hours)}. \textbf{Right:} In red, the $(2,1.5)$-barycenter \textcolor{myred}{(runtime about 30 minutes)}.
\label{fig:nested_ell}}
\end{figure}
\subsection*{Mismatched Shapes}
This first set of examples mainly serves as starting point to illustrate improved performance of the $(p,C)$-barycenter compared to the $p$-Wasserstein barycenter. A prototypical benchmark for the $p$-Wasserstein barycenter are two nested ellipses as popularized in \cite{cuturi2014fast}. For our example of nested ellipses, we assume that the support of each measure consists of nested ellipses, but the number of ellipses varies between the individual underlying measures. Specifically, we assume that for each $\mu_i$ the number of ellipses is uniformly random in $\{1,2,3\}$ and that each ellipse is discretised onto $M$ support points with unit mass, respectively.  This can be seen in \Cref{fig:nested_ell}. We observe that while the $p$-Wasserstein barycenter recovers the elliptic shape of the underlying measures, it fails to produce distinct ellipses and instead produces something akin to a ring. In contrast, the $(p,C)$-barycenter yields two distinct ellipses, which coincides with the expected number of ellipses in one of the measures. This aligns well with intuition that the $(p,C)$-barycenter will simply disregard any additional structures which are not present in a sufficient amount of underlying measures. In contrast, the $p$-Wasserstein barycenter does not allow for this flexibility which enforces additional support points.   
\begin{figure}
\centering
\includegraphics[width=\textwidth]{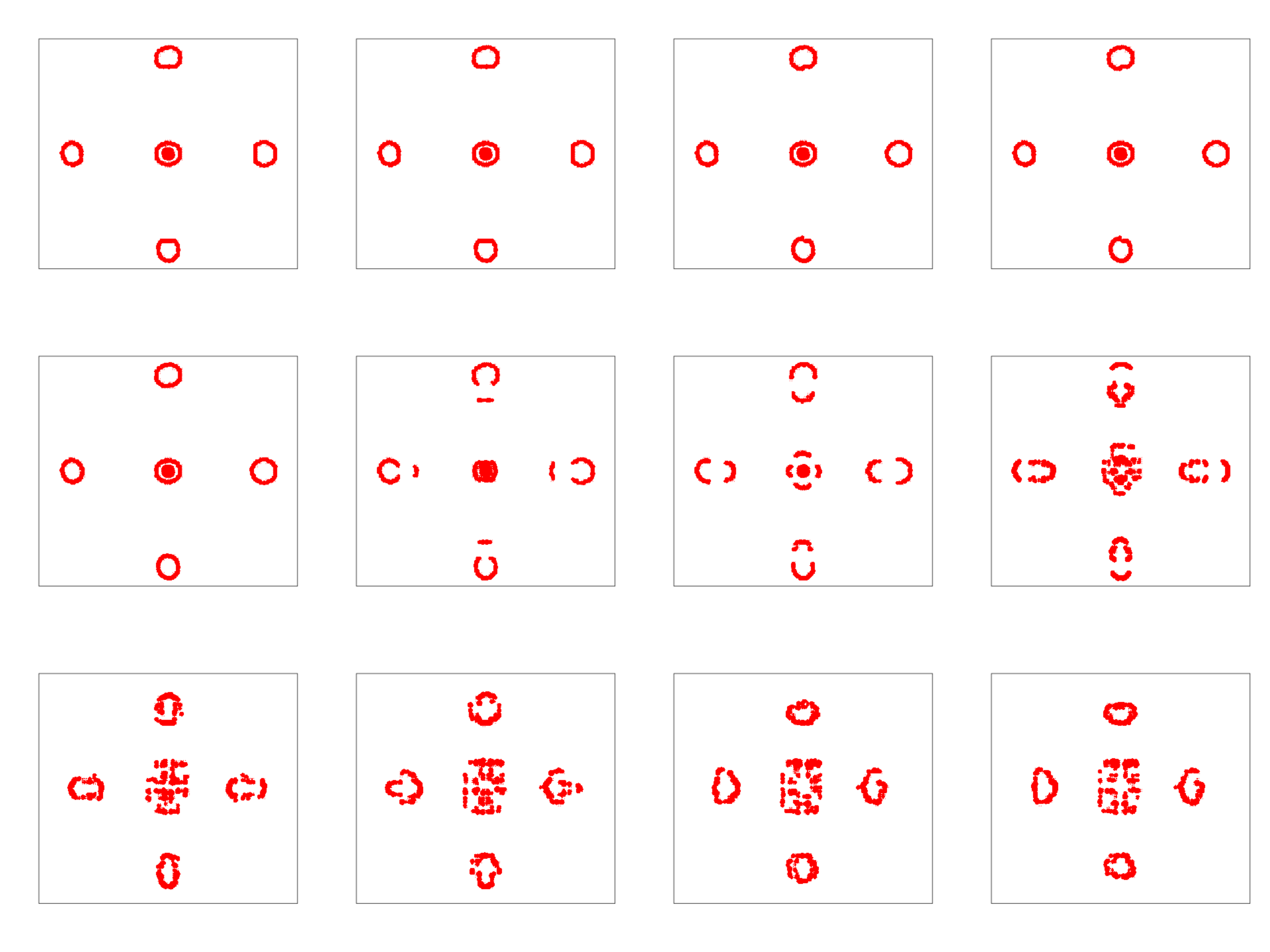}
\caption{The $(2,C)$-barycenters for the measures in \Cref{fig:cluster_ellipses} for different values of $C$. From top-left to bottom-right the values of $C/\textcolor{myred}{\text{runtime in minutes}}$ are equal to \textcolor{myred}{$0.1/28$, $0.15/28$, $0.2/26$, $0.25/28$, $0.275/27$, $0.3/33$, $0.35/59$, $0.4/132$, $0.45/156$, $0.5/160$, $0.55/160$, $0.6/155$}, respectively. \label{fig:cluster_ellipses_ccurve}}
\end{figure}

\subsection*{Local Scale Cluster Detection}
Recall the setting of \Cref{fig:cluster_ellipses}. In the following class of examples, we are interested in datasets which possesses a natural cluster structure. Let $B_1,\dots ,B_R\subset \mathbb{R}^D$ be convex, disjoint sets and assume that $\text{supp}(\mu_i)\subset \cup_{r=1}^{R}B_r$ for all $i=1,\dots ,J$. If the diameter of all $B_r$ is bounded from above by $C$ and that the distance between each two $B_r,B_s$ is at least $2^{1/p} C$, then \Cref{lem:cluster} guarentees that the $(p,C)$-barycenter detects all of the $R$ clusters in which at least $J/2$ measures have positive mass. In particular, by \Cref{thm:bary_prop} (v) the $(p,C)$-barycenter will have mass in all of those clusters. Intuitively, this setting is reasonable if, for instance, it is already known that any interactions between support points of different measures are limited to scales below a certain threshold, which should then be chosen as $C$. The lower bound on the inter-cluster distance ensures that any pair of two clusters is well-separated, ensuring that it is always possible to distinguish between two different clusters, as they can not be arbitrarily close to each other.\\
In \Cref{fig:cluster_ellipses} the $p$-Wasserstein barycenter completely fails to capture the geometric data structure. Most of its mass is between the clusters and the outer clusters have nearly no mass. Moreover, the elliptic structure within each cluster is clearly not captured. In contrast, the $(p,C)$-barycenter not only captures all clusters, it also distinguishes between the difference in intensity (expected number of ellipses) in the clusters, matching the theoretical guarantees of \Cref{lem:cluster}. We stress that for this example the choice of $C$ is of particular importance. If we choose $C$ too large, the $(p,C)$-barycenter will fail to recover the data's support structure (for an illustration of the $(p,C)$-barycenter in this example over a range of values of $C$ see \Cref{fig:cluster_ellipses_ccurve}). Consequently, it is crucial to choose $C$ appropriately. In this example, the barycenter appears to be stable and detect all clusters for $C\in [0.1,0.275]$. Notably, if the locations of the clusters are already known, this setting also allows for parallel computations of the $(p,C)$-barycenter, where the problems are solved separately on each cluster and recombined at the end (\Cref{lem:cluster}). 
\subsection*{Randomly distorted Measures}
In a statistical context it is important to investigate the stability of the $(p,C)$-barycenter under random distortions. We fix a reference measure $\mu_0$ on $\mathbb{R}^d$ and generate a set of measures by random modifications of $\mu_0$. We then attempt to recover $\mu_0$ by computing the $p$-Wasserstein and $(p,C)$-barycenter of these measures, respectively. \\
In the following, let $B(p)$ denote a Bernoulli random variable with mean $p$, $Poi(\lambda)$ a Poisson distribution with mean $\lambda$ and $U[a,b]$ a uniform distribution on $[a,b]$. We generate $\mu_1,\dots ,\mu_J$ as follows: \\
For $i=1,\dots ,J$ initialise $\mu_i=\mu_0$, then succesively modify $\mu_i$ based on the four following steps.
\begin{itemize}
\item[(i)] \textbf{Point Deletion:} Fix $p_{del}\in [0,1]$ and $\lambda_{del} \in \mathbb{R}_+$. We draw a Ber($p_{del})$ random variable. If it takes the value $1$, then we draw $D\sim Poi(\lambda_{del})$ and select $\min(D,\lvert \text{supp}(\mu_0)\rvert)$ points in the support of $\mu_0$ uniformly by drawing without replacement. These points (and their mass) are not contained in $\mu_i$, since they have been deleted. 
\item[(ii)] \textbf{Point Addition:} We fix parameters $p_{add}\in [0,1],\lambda_{add}\in \mathbb{R}_+, m_{add}\in \mathbb{R}^2, \sigma_{add}\in \mathbb{R}^{2\times 2},u_0,u_1\in \mathbb{R}$. Draw a Ber($p_{add}$) random variable. If it takes the value $1$, draw a Poi($\lambda_{add}$) random variable $\alpha$. Then, generate $\alpha$ random variables following a normal distribution with mean $m_{add}$ and covariance matrix $\sigma_{add}$. Add these support points to $\mu_i$, where the weight of each of these points is determined by independent $U[u_0,u_1]$ random variables. 
\item[(iii)] \textbf{Position Change:} Fix parameters $a_1,a_2,b_1,b_2\in \mathbb{R}$ with $a_1 \leq b_1$ and $a_2\leq b_2$. For each $x_0$ in the support of $\mu_i$, we draw a $U([a_1,b_1]\times [a_2,b_2])$ random variable and shift the position of $x_0$ by it.
\item[(iv)] \textbf{Weight Change:} Fix parameters $l,u\in \mathbb{R}$ with $l\leq u$. For each support point $x_0$ of $\mu_0$ with weight $w_0$, we draw a $U[l,u]$ random variable $U$ and change the weight of $x_0$ in $\mu_i$ to be $w_0+U$. 
\end{itemize}
An example of this setting can be seen in \Cref{fig:noise_ell}. Comparing the two barycenters displayed there to the original measure reveals that, while the rough shape of the $2$-Wasserstein barycenter is correct, its mass is spread out over a larger area and it has a significantly larger number of support points. Since all measures have been normalised, we have also lost all information on the mass of $\mu_0$. Contrary to that, the $(p,C)$-barycenter retrieves the original measures recovering the location and number of the of support points closely. Additionally, it also has a mass which only deviates from the original mass by about $0.23\%$. If one is only interested in recovering the general shape of the data, both approaches provide comparable performance. However, if the measures total mass and more detailed support structure are of importance the $(p,C)$-barycenter appears to be preferable.
\begin{figure}
\centering
\includegraphics[width=\textwidth]{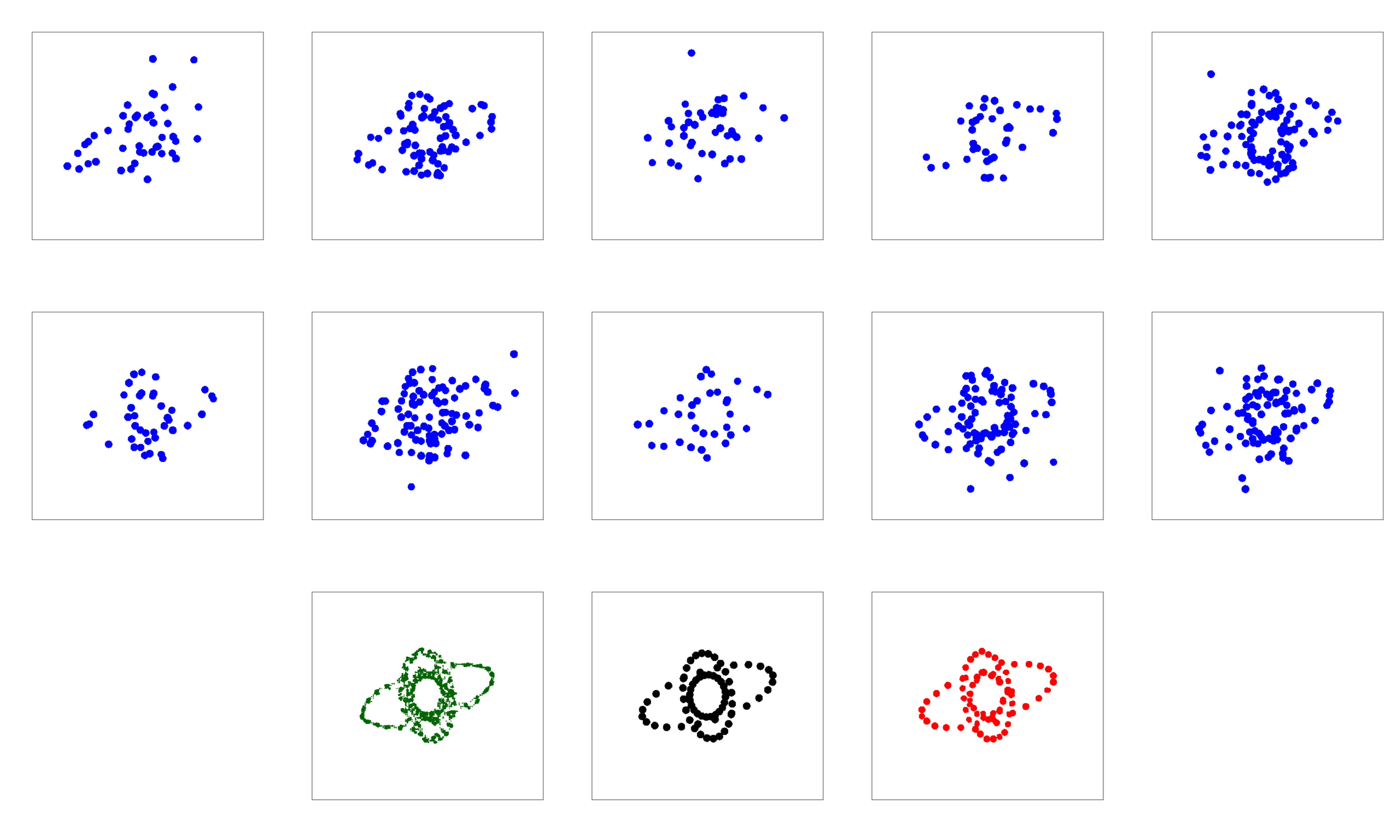}
\caption{An excerpt from a dataset of $N=100$ noisy nested ellipses supported in $[0,1]^2$. The parameters are $p_{del}=1/3$, $\lambda_{del}=75$, $p_{add}=1/3$, $\lambda_{add}=25$, $m_{add}=(0.5,0.5)^T$, $\sigma_{add}=0.15I_{2}$, $u_0=0.9$, $u_1=1.1$, $a_1=a_2=-0.025$, $b_1=b_2=0.025$, $l=u=0.1$. \textbf{Left:} The $2$-Wasserstein barycenter in dark green \textcolor{myred}{(runtime about 4 hours)}. \textbf{Center:} The original measure $\mu_0$ (black). \textbf{Right:} The $(2,1.5)$-barycenter in red \textcolor{myred}{(runtime about 20 minutes)}. \label{fig:noise_ell}}
\end{figure}
\subsection*{Total Mass Intensity}

\begin{figure}
    \centering
    \includegraphics[width=\textwidth]{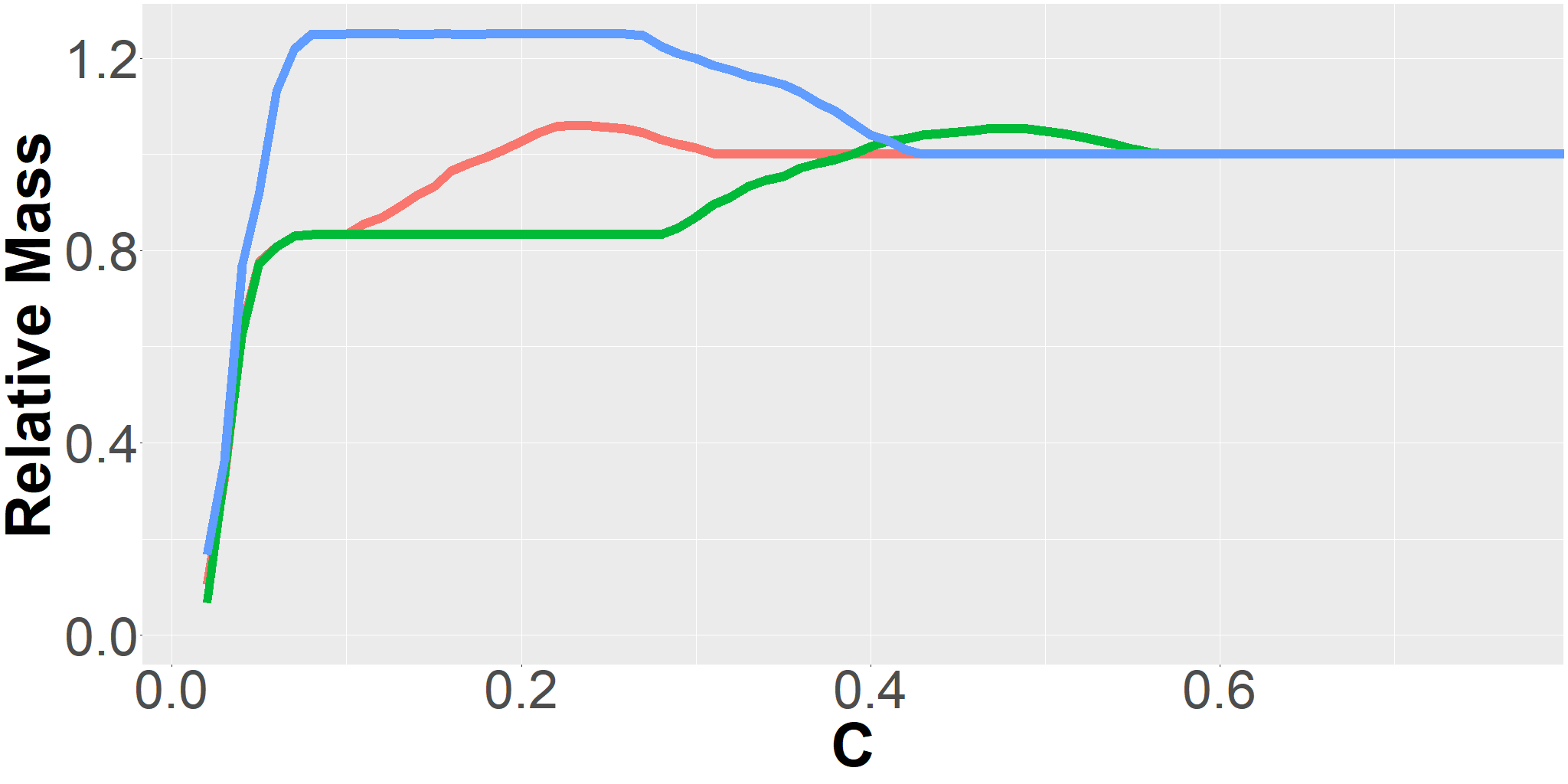}
    \caption{\textcolor{mygreen}{The mass of a $(p,C)$-barycenter for three sets of measures relative to the median of the total mass intensities of these measures. The green line corresponds to $J=25$ measures from the same class as considered in \Cref{fig:cluster_ellipses}. The red line corresponds to the same measures where the four outer clusters have been moved closer to the central one, such that their distance has been halved. The blue line corresponds to $J=5$ measures with the same cluster structure as in \Cref{fig:cluster_ellipses}, where the total number of ellipses in all clusters is fixed to be equal to four for all $J$ measures.}}
    \label{fig:barymass}
\end{figure}

\textcolor{mygreen}{While the $p$-Wasserstein barycenter of $J$ probability measures has mass one, the mass of the $(p,C)$-barycenter depends on $C$ as well as the geometry of the measures $\mu_1,\dots,\mu_J \in \msrX$. Exact values for the mass of a $(p,C)$-barycenter without detailed computations, are only available in the limiting scenarios where $C$ is extremely small or large relative to the other distances in $\X$. For the former, we know by \Cref{thm:bary_prop} $(v)$ that the barycenter has mass zero for disjoint measures and for the latter, \Cref{thm:bary_prop} $(vi)$ yields that there exists a $(p,C)$-barycenter with total mass intensity equal to the median of $\mathbb{M}(\mu_1),\dots,\mathbb{M}(\mu_J)$. For intermediate values of $C$, \Cref{thm:bary_prop} $(i)$ yields the upper bound by $2J^{-1}\sum_{i=1}^J\mathbb{M}(\mu_i)$. To highlight some possible behaviours of the total mass intensity of $(p,C)$-barycenter we consider three specific examples in \Cref{fig:barymass}. We note that in all three cases at about $C=0.6$ the mass of the barycenters is at the median of their respective $\mu_1,\dots,\mu_J$ and does no longer change with increasing $C$. This is significantly smaller than the requirement in \Cref{thm:bary_prop} $(vi)$, which underlines the fact that while in the worst case, this lower bound is sharp, in many examples the total mass of the $(p,C)$-barycenter stabilises significantly earlier. Moreover, none of the three curves is monotone. Instead the total mass of the barycenter is increasing up to a certain point, after which it decreases until it reaches the median of the masses. This makes intuitive sense, as the measures are disjoint, thus for small $C$ the barycenter is empty and starts to grow in mass quickly as the points within the clusters can be matched. In particular, the differences in intensity between clusters might lead to a total mass over the median $\mathbb{M}(\mu_1),\dots,\mathbb{M}(\mu_J)$, as by \Cref{lem:cluster} the total mass intensity of the $(p,C)$-barycenter is $\sum_{r=1}^R med(\mathbb{M}(\mu_{1_{\vert B_r}}),\dots,\mathbb{M}(\mu_{J_{\vert B_r}}) $, where $B_1,\dots,B_5$ denote the respective cluster locations. For larger $C$ these clusters start to merge and support points between the clusters reduce the total mass. In particular, these points can be seen clearly in the plot. Up until about $C=0.1$, which is the cluster size, the mass of the barycenters rises sharply, before stabilising until the intercluster distance is reached. This is about $0.3$ for the green and blue lines and about $0.15$ for the red line (since the measures in this example are generated by halving the intercluster distance from the green one). This behaviour highlights the sensitivity of the mass of the $(p,C)$-barycenter to the geometry of the measures. It is therefore impossible to infer the total mass of the $(p,C)$-barycenter from the magnitude of $C$ alone without accounting for the specific measures. However, analysing the structural properties of the support sets of the measures might provide a good indication at what values of $C$ changes in drastic behaviour of the total mass are to be expected. }
\subsection{Comparison with Related Unbalanced Barycenter Concepts}
We compare the $(p,C)$-barycenter with two alternative UBC approaches. \\
\textbf{The Gaussian-Hellinger-Kantorovich Barycenter:} This example falls in the general framework of optimal entropy transport problems. Measuring deviation between a feasible solution and the input marginals is carried out via the \textit{Kullback-Leibler divergence} defined for $\mu\ll\nu$ \footnote{A measure $\mu\in \msrX$ is said to be \emph{absolutely continuous} (denoted $\mu \ll \nu$) with respect to another measure $\nu\in \msrX$ if $\nu(A)=0$ implies $\mu(A)=0$ for any measurable set $A$.} as
\[
KL(\mu,\nu)=\es{x\in X}{} \mu(x)\log \left(\frac{\mu(x)}{\nu(x)}\right). 
\]
If $\mu\not\ll\nu$ the value of $KL$ is set to be $+\infty$. For a parameter $\lambda>0$, the \textit{Gaussian-Hellinger-Kantorovich Distance} \citep{liero2018optimal} is defined as 
\[
GHK_{\lambda}(\mu,\nu)= \min_{\pi\in \mathcal{M}_+(\X\times\X)} \sum\limits_{x,x^\prime \in \X}d^2(x,x^\prime)\pi(x,x^\prime) + \lambda KL(\mu,\pi_1) + \lambda KL(\nu,\pi_2),
\] 
where $\pi_1$ and $\pi_2$ denote the respective marginals of $\pi$. The $GHK_\lambda$ barycenter is defined as
\[
\argmin_{\mu\in\mathcal{M}_+(\Y)} \esn GHK_\lambda(\mu_i,\mu).
\]\\
\textbf{The Hellinger-Kantorovich Barycenter:} The Hellinger-Kantorovich distance, also known as Wasserstein-Fisher-Rao distance \citep{liero2018optimal,chizat2018interpolating}, is closely related to the Gaussian-Hellinger-Kantorovich distance. For fixed parameter $\sigma \in (0,\pi/2]$, referred to as the \textit{cut-locus}, it is defined as 
\begin{align*}
    HK_{\sigma}(\mu,\nu)= \min_{\pi\in \mathcal{M}_+(\X\times\X)} &\sum\limits_{x,x^\prime \in \X}(-\log (\cos_\sigma^2(d(x,y)))\pi(x,x^\prime) \\ + &KL(\mu,\pi_1) + KL(\nu,\pi_2),
\end{align*}
where $cos_\sigma: z\mapsto \cos(\min (z,\sigma))$. For a fixed cut-off locus $\sigma$, the $HK_\sigma$ barycenter is defined as
\[
\argmin_{\mu\in\mathcal{M}_+(\Y)} \esn HK_\sigma(\mu_i,\mu).
\]
\textbf{Comparing the barycenters:} As the resulting barycenters vary significantly in all three cases, depending on the parameters $C,\lambda,\sigma$, we compare their behaviour upon change of parameter.
\begin{figure}
\includegraphics[width=\textwidth]{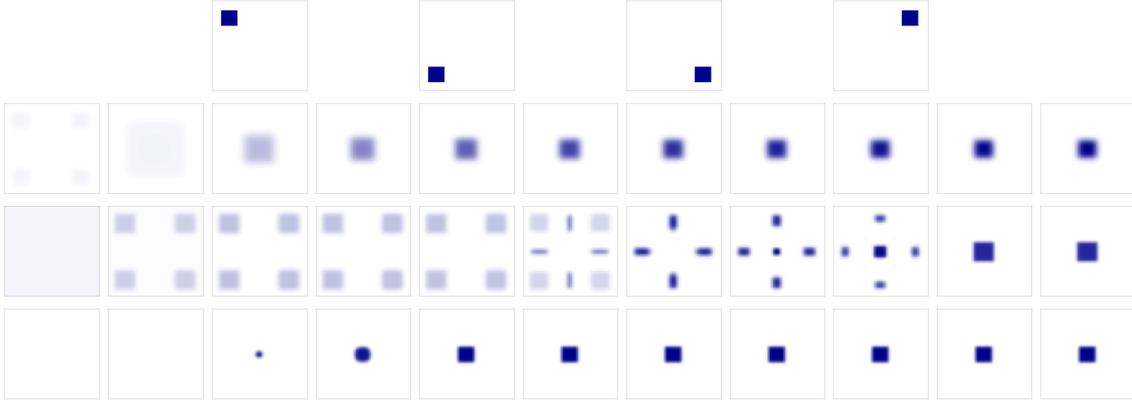}
\caption{Comparison of the three unbalanced barycenters when varying their parameter. All measures are supported on an equidistant $64\times 64$ grid in $[0,1]^2$. \textbf{First row:} The four underlying measures. \textbf{Second row:} The Gaussian-Hellinger-Kantorovich barycenter for $\lambda=0.01,0.15,\dots,1.83,1.97$. \textbf{Third row:} The Hellinger-Kantorovich barycenter for $\sigma=0.01,0.08,\dots,0.64,0.71$. \textbf{Fourth row:} The KR barycenter for $C=0.01,0.08,\dots,0.64,0.71$. \label{fig:unb_comp}}
\end{figure}
As a simple example, we consider four measures supported on subsets of a grid on $[0,1]^2$, displayed in \Cref{fig:unb_comp}. To ensure fair comparison, we deploy the same method based on the general scaling method \citep{chizat2018scaling} to approximate the UBC in all three cases. However, we point out that this implies disregarding the ambient space and instead taking the minimum over all positive measures supported on a prespecified grid in $[0,1]^2$. \\ 
For high parameter values all three approaches yield similar results. This is, of course, to be expected, since these distances interpolate between $p$-Wasserstein distance and total variation/Kullback-Leibler distance and large parameters correspond to a setting being close to the Wasserstein distance. The KR barycenter has mass zero for small choice of $C$ by \Cref{thm:bary_prop} (iv), since the four measures have disjoint support. After reaching a threshold of $C\approx 0.1$, the mass in the $(2,C)$-barycenter starts to increase as mass is added in the center of the unit square until at $C\approx 0.3$ the mass of an individual data measure is reached.\\
For small $\lambda$ the $GHK_\lambda$ barycenter has small mass and its support is close to that of a linear mean of the four measures, though the total mass intensity is significantly lower than for the original measures. With increasing $\lambda$ the mass starts to increase and to smear into the middle of the unit square, until a large square, encompassing all four data supports, is formed. After this point increasing $\lambda$ causes the square to contract while its mass increases. Finally, we approach a single square at roughly the same size as the squares in the underlying measures for large $\lambda$. \\
The $HK_\sigma$ barycenter is close to a linear mean of the four measures for small cut-off. Increasing $\sigma$ initially reduces the mass at each of the square locations. At a threshold of $\sigma\approx0.34$, we observe a change, where part of the mass is moved vertically or horizontally to the mid points between the squares in a rectangular shape. Until $\sigma\approx 0.43$ all mass is shifted to these "middle-rectangles", at which point a second shift occurs, where the mass from these rectangles starts to move towards a square in the center. At $\sigma \approx 0.6$, all mass has been shifted towards a square in the center and there is no further change in the HK barycenter, when increasing $\sigma$.\\
\begin{figure}
    \centering
    \includegraphics[width=0.95\linewidth]{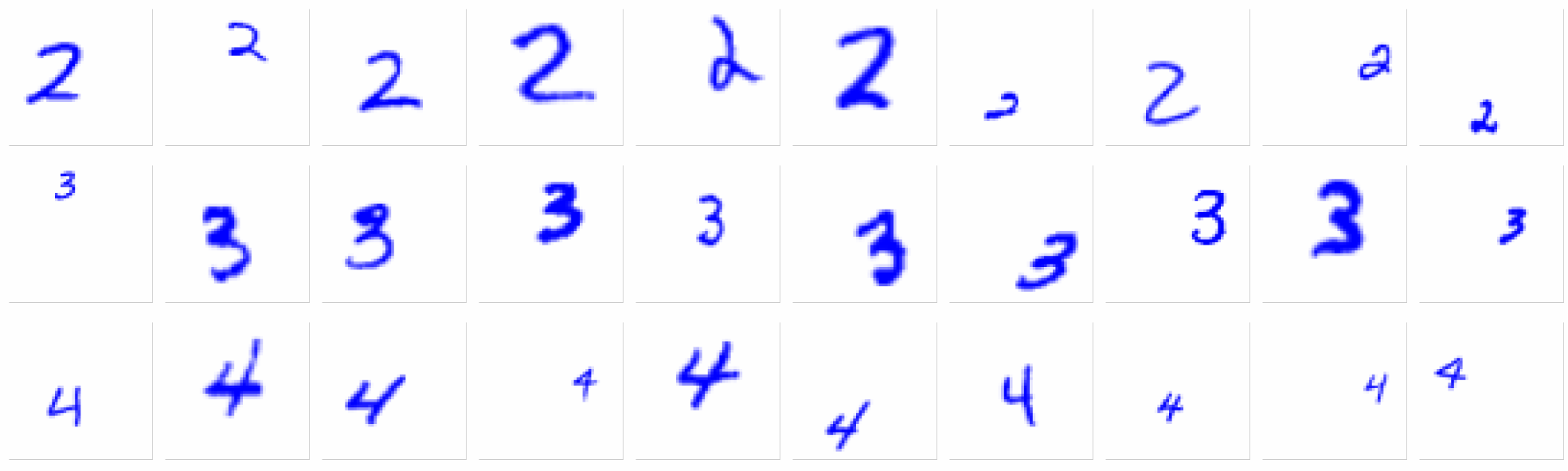}
    \caption{\textcolor{mygreen}{Images displaying the underlying measures used for barycenter computation in \Cref{fig:mnist}. Each row corresponds to a dataset of ten elements of the classical MNIST dataset which have been randomly rescaled and shifted within a $50\times 50$ grid in $[0,1]^2$. Their total mass intensities have not been normalised.}}
    \label{fig:mnist_data}
\end{figure}
\begin{figure}
  \centering
  \subfloat[][]{\includegraphics[width=0.95\linewidth]{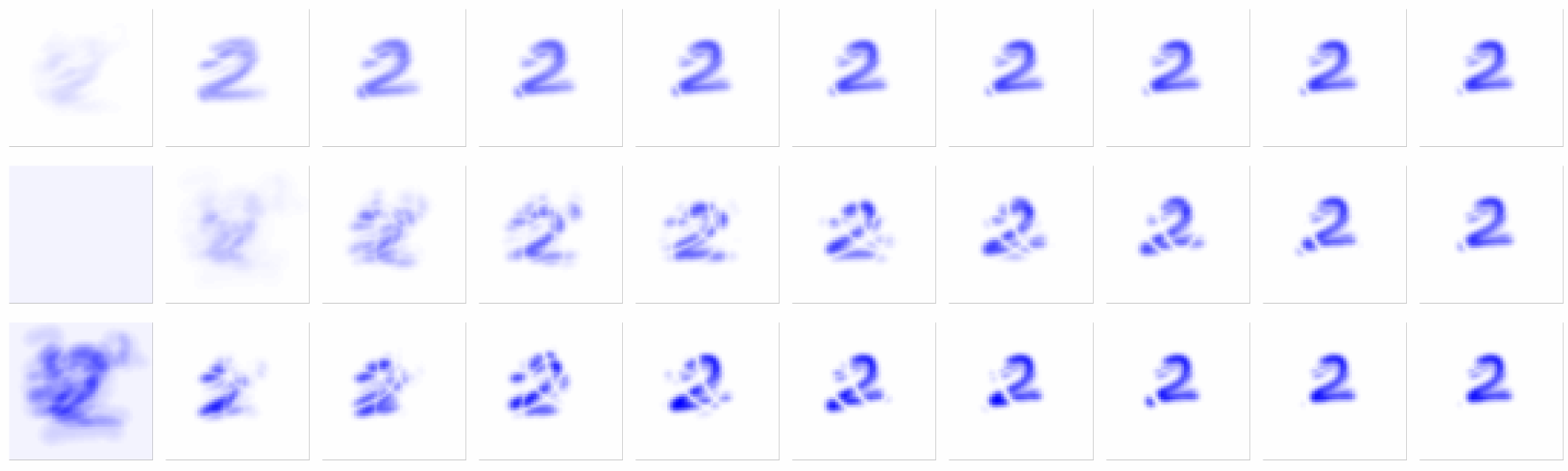}} \hspace{0.05\linewidth}
    \subfloat[][]{\includegraphics[width=0.95\linewidth]{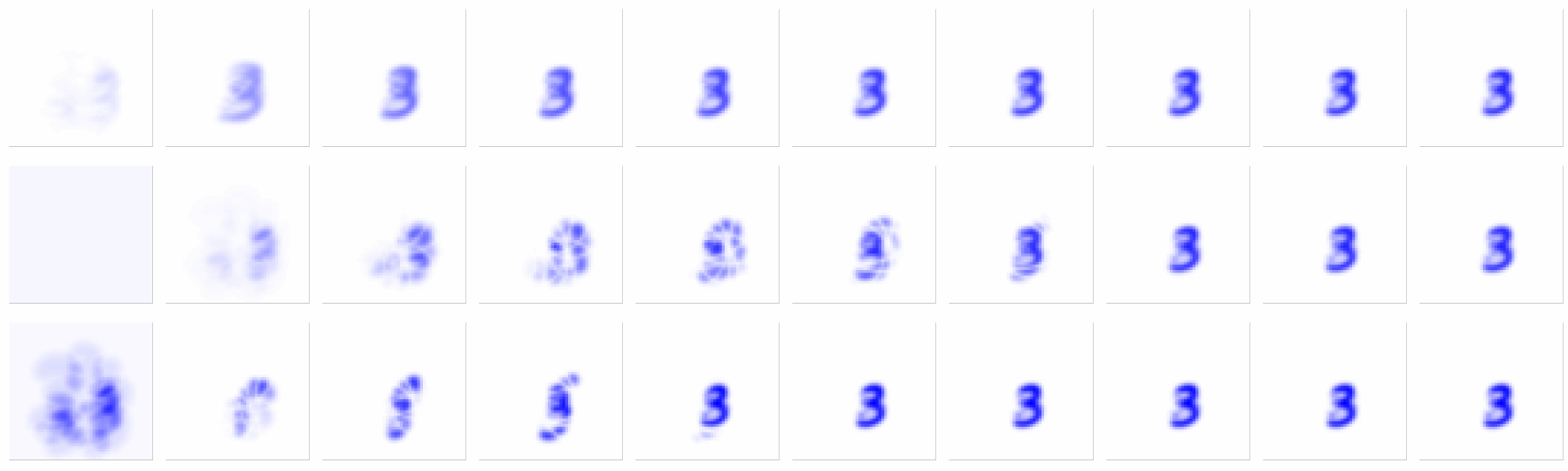}} \hspace{0.05\linewidth}
  \subfloat[][]{\includegraphics[width=0.95\linewidth]{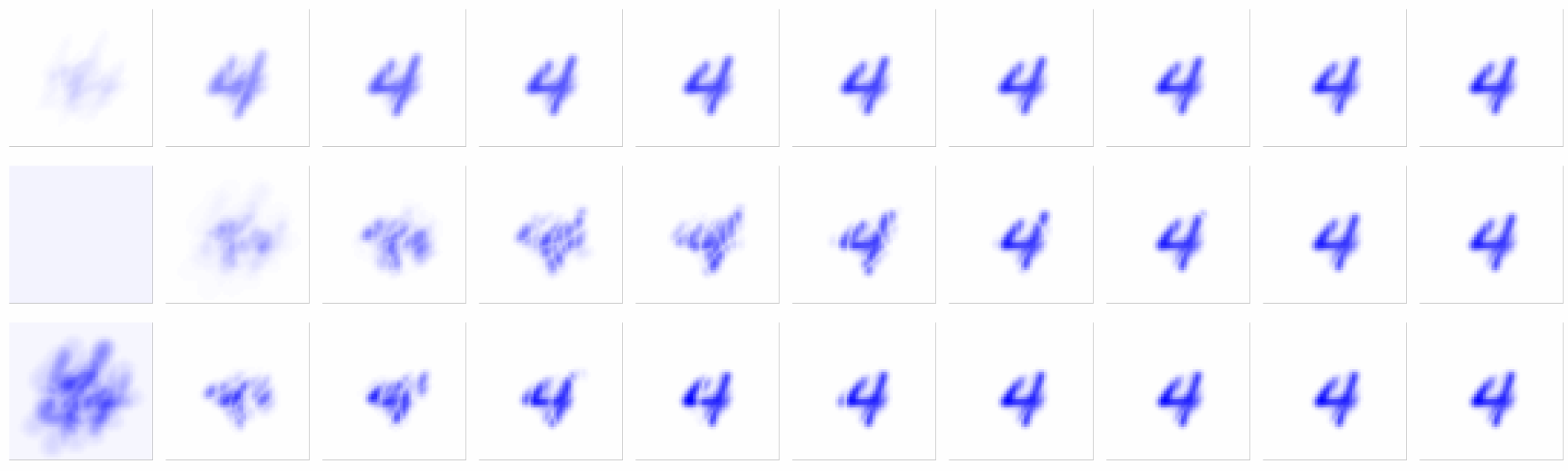}}%
  \caption{\textcolor{mygreen}{Comparison of the three unbalanced barycenters when varying their parameter. The set of underlying measures for (a) is the first row of \Cref{fig:mnist_data}. For (b) it is the second for (c) the third. For each class of examples the three different UOT barycenter models are considered in different rows: \textbf{First row:} The Gaussian-Hellinger-Kantorovich barycenter for $\lambda= 0.01, 0.12, 0.23,\dots,1$. \textbf{Second row:} The Hellinger-Kantorovich barycenter for $\sigma=0.01,0.08,\dots,0.64$. \textbf{Third row:} The KR barycenter for $C=0.1,0.2,\dots,1$.} \label{fig:mnist}}%
\end{figure}
\textcolor{mygreen}{Additionally, we consider \Cref{fig:mnist}, where the three unbalanced barycenter models are compared on three exemplary classes based on the MNIST dataset. Here, the original $28\times 28$ images have been rescaled to sizes between $14\times 14$ and $42\times 42$ and embedded in a random subgrid of a $50\times 50$ image. In this setting, there is a notable distinction between the GHK barycenter and the KR and HK barycenters. While for the former, the overall shape is recovered even for small parameter values, the latter two barycenters produce unstructured results for small parameters. The GHK distance is not constructed to have a maximal transport distance comparable to the impact of $C$ or $\sigma$ in the other two cases, which allows to transport across larger distance and recover the correct shape for smaller values of $\lambda$. However, the mass of the GHK barycenter is significantly smaller than that of the original measures for small values of $\lambda$ and only increases to the correct magnitude for larger penalty values. The HK and KR barycenters consist of fragments of the final shape which move towards a joint location for increasing parameters. For large penalties all three models are nearly identical and display the corresponding number correctly. This makes sense, as in this setting the minimisation in any individual term of the $(p,C)$-Fr\'echet functional is driven by minimising an OT term. We point out that for the $(p,C)$-barycenter this regime is guaranteed to be reached by choosing $C$ larger than the diameter of the space, while for the other two models the suitable parameter choice for this example is ambiguous without actually computing the result for specific values.}\\

Overall, for large parameter values all considered UBCs perform similarly. In small parameter regimes we observe significant differences. This difference in behavior is to be expected as the dependence of the UOT models on their parameters varies significantly. One key advantage of the KR barycenter is that its connection between the choice of $C$ and the properties of the resulting barycenter is immediate and intuitive. While the cut-off locus $\sigma$ for the HK barycenter fulfils a similar role, imposing control at the maximum scale at which transport does occur, the consequences of changing $\sigma$ from one value to another are far less immediate due to the involved structure of the cost functional in this setting. Similarly to the KR barycenter, it is worth noticing that the HK barycenter does allow for mass at locations given by centroids of support points of $L<N$ measures. Though, while for the KRD a feature of the underlying measures is only contained in the barycenter if it is present in more than $L=N/2$ measures, the HK barycenter also allows for mass at locations constructed from less support points. Thus, the HK barycenter is prone to being more susceptible to errors due to noise within the data. Compared to the other two choices, the parameter $\lambda$ of the GHK barycenter does appear to have less interpretation, with the only clear connection being that increasing $\lambda$ increases the mass of the GHK barycenter. There does also not appear to be any well-founded method how to approach the choice of $\lambda$ for a given dataset.

\section*{Acknowledgments}
F.\  Heinemann and M.\ Klatt gratefully acknowledge support from the DFG Research Training Group 2088 \textit{Discovering structure in complex data: Statistics meets optimization and inverse problems}. A.\ Munk gratefully acknowledges support from the DFG CRC 1456 \textit{Mathematics of the Experiment A04, C06} and the Cluster of Excellence 2067 MBExC \textit{Multiscale bioimaging--from molecular machines to networks of excitable cells}. We kindly thank two anonymous referees for their helpful comments and suggestions.

\bibliographystyle{abbrvnat}
\bibliography{unbalanced_barycenter}{}

\begin{appendix}

\section{Proofs}

\subsection{Proofs of \Cref{sec:pac}}\label{sec:proofpac}
\begin{proof}[Proof of \Cref{lem:fre_eq}]
Let $\mu \in \msrY$ be such that $\mathbb{M}(\mu)\leq \sum_{i=1}^J \mathbb{M}(\mu_i)$. Then 
\begin{align*}
F_{p,C}(\mu)\overset{(i)}{=}&\frac{1}{J}\sum_{i=1}^J \tilde{\text{OT}}_{\tilde{d}^p_C}^p\left(\mu+ \mathbb{M}(\mu_i)\delta_{\dum},\mu_i+\mathbb{M}(\mu)\delta_{\dum}\right)\\
\overset{(ii)}{=}&\frac{1}{J}\sum_{i=1}^J \tilde{\text{OT}}_{\tilde{d}^p_C}^p\left(\mu+\left( \sum_{i=1}^J \mathbb{M}(\mu_i)-\mathbb{M}(\mu)\right) \delta_{\dum},\tilde{\mu}_i\right)\\
=&\tilde{F}_{p,C}\left(\mu+\left( \sum_{i=1}^J \mathbb{M}(\mu_i)-\mathbb{M}(\mu)\right) \delta_{\dum}\right),
\end{align*}
where $(i)$ follows from the lift to an OT problem (\Cref{app:lifttoOT}) and $(ii)$ follows from \Cref{lem:add_mass} by adding mass $\sum_{j\neq i} \mathbb{M}(\mu_j)-\mathbb{M}(\mu)$ at $\dum$. We then have that 
\begin{align*}
\min_{\substack{\mu\in\msrY\\\mathbb{M}(\mu)\leq \sum_{i=1}^J\mathbb{M}(\mu_i)}}F_{p,C}(\mu)
&=\min_{\substack{\mu\in\msrY\\\mathbb{M}(\mu)\leq \sum_{i=1}^J\mathbb{M}(\mu_i)}} \tilde{F}_{p,C}\left(\mu+\left( \sum_{i=1}^J \mathbb{M}(\mu_i)-\mathbb{M}(\mu)\right) \delta_{\dum}\right)\\
&\geq \min_{\substack{\mu\in\mathcal{M}_+(\tilde{\Y})\\\mathbb{M}(\mu)=\sum_{i=1}^J \mathbb{M}(\mu_i)}} \tilde{F}_{p,C}(\mu)
\end{align*}
and 
\begin{align*}
\min_{\substack{\mu\in\mathcal{M}_+(\tilde{\Y})\\\mathbb{M}(\mu)=\sum_{i=1}^J \mathbb{M}(\mu_i)}} \tilde{F}_{p,C}(\mu)= \min_{\substack{\mu\in\mathcal{M}_+(\tilde{\Y})\\\mathbb{M}(\mu)=\sum_{i=1}^J \mathbb{M}(\mu_i)}} {F}_{p,C}(\mu_{\vert \Y})\geq \min_{\substack{\mu\in\msrY\\\mathbb{M}(\mu)\leq \sum_{i=1}^J\mathbb{M}(\mu_i)}}F_{p,C}(\mu).
\end{align*}
Combining both inequalities and using \Cref{lem:mass_cap} then finishes the proof.
\end{proof}

\begin{proof}[Proof of \Cref{lem:borelapplication_prop}]
\textbf{(i)} By definition, the objective value for $\tilde{T}^{J,p}_C(y_1,\ldots,y_J)$ at $\dum$ is equal to $(J-\lvert \BS\rvert)C^p/2$. Thus, $\tilde{T}_C^{J,p}$ outputs $\dum$ if and only if for any $y \in \Y$ it holds
\begin{align*}
\sum_{i=1}^J \tilde{d}^p_C(y_i,y) \geq (J-\lvert \BS\rvert)C^p/2
\end{align*}
which is equivalent to
\begin{align*}
\es{i \not \in \BS}{} \tilde{d}^p_C(y_i,y)\geq (J-2\lvert \BS\rvert)C^p/2.
\end{align*}
In particular, if all inequalities are strict $\dum$ is the unique output for $\tilde{T}^{J,p}_C(y_1,\ldots,y_J)$. Statement \textbf{(ii)} is a direct consequence of (i). For statement \textbf{(iii)} we again use that by definition $\tilde{d}^p_C(y,\dum)=\nicefrac{C^p}{2}$ for any $y\in\Y$ and hence
\begin{align*}
\min_{y\in \Y} \sum_{i=1}^J \tilde{d}^p_C(y_i,y) = \lvert \BS \rvert \frac{C^p}{2}+\min_{y\in \Y} \sum_{i\not\in\BS} \tilde{d}^p_C(y_i,y).
\end{align*}
Proving \textbf{(iv)}, let $C>2^{\nicefrac{1}{p}}\text{diam}(\Y)$, pick points $y_1,\ldots,y_J\in\Y$ and observe that for any $y\in\Y$ it holds that
\begin{align*}
\sum_{i=1}^J \tilde{d}^p_C(y_i,\dum)=J\frac{C^p}{2}>J\text{diam}(\Y)^p\geq \sum_{i=1}^J d^p(y_i,y).
\end{align*}
Thus, $\tilde{T}_C^{J,p}(x_1,\ldots,x_J)\neq \dum$ and since $\lvert \BS\rvert=0$, the claim follows from (iii).
\end{proof}

\subsection{Proofs of \Cref{sec:Theory}}\label{sec:proofs}
\begin{proof}[Proof for \Cref{lem:transportC}]
Suppose that $\pi_C$ is optimal but its induced graph $G(\pi_C)$ contains a path $P=(x_{i_1},\ldots,x_{i_k})$ such that $\mathcal{L}(P)>C^p$. By definition of $G(\pi_C)$ it holds that $\pi_C(x_{i_j},x_{i_{j+1}})>0$ for all $1\leq j \leq k-1$. We define a new transport plan with augmented transport along the path $P$. For this, define $\epsilon\coloneqq \min_{1\leq j \leq k-1}\pi_C(x_{i_j},x_{i_{j+1}})$ and construct the new plan
\begin{align*}
\tilde{\pi}_C(x,x^\prime)= \begin{cases}
\pi_C(x,x^\prime)- \epsilon, &\text{ if } \exists\, 1\leq j \leq k-1,\, x=x_{i_j},\, x^\prime=x_{i_{j+1}}\\
\pi_C(x,x^\prime), &\text{ else.}
\end{cases}
\end{align*}
Compared to $\pi_C$ the transportation cost for $\tilde{\pi_C}$ is reduced by $\epsilon\mathcal{L}(P)$ while the marginal deviation is increased by $\epsilon C^p$. In particular, it holds that
\begin{align*}
&\sum_{x,x^\prime} d^p(x,x^\prime)\tilde{\pi}_C(x,x^\prime)+\frac{C^p}{2}\left(\sum_{x} \mu(x)-\tilde{\pi}_C(x,\X) + \sum_{x^\prime} \nu(x^\prime)-\tilde{\pi}_C(\X,x^\prime) \right)\\
=&\sum_{x,x^\prime} d^p(x,x^\prime)\pi_C(x,x^\prime)+\frac{C^p}{2}\left(\sum_{x} \mu(x)-\pi_C(x,\X) + \sum_{x^\prime} \nu(x^\prime)-\pi_C(\X,x^\prime) \right)\\  &\quad +\epsilon \left(C^p-\mathcal{L}(P) \right).
\end{align*}
As $\epsilon>0$ and $\mathcal{L}(P)>C^p$ this contradicts the optimality for $\pi_C$. Consequently, any path $P$ in the induced graph $G(\pi_C)$ necessarily has path length at most $C^p$. If $d(x,x^\prime)>C$ this implies that $d^p(x,x^\prime)>C^p$ and hence by the statement on induced graphs that $\pi_C(x,x^\prime)=0$.
\end{proof}
\begin{proof}[Proof for \Cref{prop:KRdistance}]
We first establish the metric properties \textbf{(i)}. It is straightforward to show $\text{KR}_{p,C}(\mu,\nu)=0$ if and only if $\mu=\nu$ and that $\text{KR}_{p,C}$ is symmetric. For the triangle inequality let $\mu,\nu,\tau\in\msrX$ and choose $B\geq \max\{\mathbb{M}(\mu),\mathbb{M}(\nu),\mathbb{M}(\tau)\}$. Then by augmenting the measures accordingly (\Cref{app:lifttoOT}) we find that 
\begin{align*}
\text{KR}_{p,C}(\mu,\nu)&=\left( \tilde{\text{OT}}_{\tilde{d}^p_C}(\tilde{\mu},\tilde{\nu})\right)^{\nicefrac{1}{p}} \\
&\leq   \left( \tilde{\text{OT}}_{\tilde{d}^p_C}(\tilde{\mu},\tilde{\tau})\right)^{\nicefrac{1}{p}} + \left( \tilde{\text{OT}}_{\tilde{d}^p_C}(\tilde{\tau},\tilde{\nu})\right)^{\nicefrac{1}{p}} = \text{KR}_{p,C}(\mu,\tau) + \text{KR}_{p,C}(\tau,\nu)
\end{align*}
where the inequality follows by the triangle inequality for the Wasserstein distance \cite[Theorem 7.3]{villani2003topics}.
Statement $\textbf{(ii)}$ follows from \Cref{lem:transportC} by noting that there exists at least one optimal solution $\pi_C$ equal to zero except on the diagonal for which $\pi_C(x,x)=\mu(x)\wedge \nu(x)$. Plugging into the objective of \eqref{eq:UOT} yields the claim. Additionally, suppose that w.l.o.g. $\mu(x)\geq \nu(x)$ for all $x\in\X$. Then independent to the choice of $C>0$ and $p\geq 1$ the unique optimal solution is to remain all shared mass at its common place and to delete surplus material which is exactly the solution $\pi_C(x,x)=\mu(x)\wedge \nu(x)$ described before. Statement $\textbf{(iii)}$ follows by noting that for $C\geq\max_{x,x^\prime}d(x,x^\prime)$ the dual formulation in \eqref{eq:dualUOT} and in \eqref{eq:dualOT} coincide. \\
Finally, for statement \textbf{(iv)} we note that by construction it holds $\tilde{d}_{C_1}^p(x,y)\leq \tilde{d}_{C_2}^p(x,y)$ for all $x,y\in \tilde{\mathcal{Y}}$. Hence, for any coupling $\pi$ of the augmented measures $\tilde{\mu},\tilde{\nu}$ it holds
\begin{align*}
    \es{x,x^\prime\in \tilde{\Y}}{}\tilde{d}_{C_1}^p(x,x^\prime)\pi(x,x^\prime)\leq \es{x,x^\prime\in \tilde{\Y}}{}\tilde{d}_{C_2}^p(x,x^\prime)\pi(x,x^\prime).
\end{align*}
Taking the minimum over all couplings of $\tilde{\mu}$ and $\tilde{\nu}$ on both sides completes the proof.
\end{proof}

\subsubsection{Proof for \Cref{thm:KRultrametric}}
\label{app:tree}
Using the lift to the OT problem, we can now start to prove the closed formula on ultra-metric trees. For this, consider an ultrametric tree $\mathcal{T}$ with height function $h\colon V\to \mathbb{R}_+$ and define its \emph{$p$-height transformed tree} denoted $\mathcal{T}_p\coloneqq \mathcal{T}$ as the same tree but with height function $h_p(\textsf{v})=2^{p-1}h(\textsf{v})^p$. An illustration is given in \Cref{fig:explicitformula}. Notice that by monotonicity $\mathcal{T}_p$ is again an ultrametric tree.
\begin{lemma}\label{lem:transformedtree}
Let $\mathcal{T}$ be an ultrametric tree with height function $h\colon V\to \mathbb{R}_+$ and consider its $p$-height transformed tree $\mathcal{T}_p$. Then it holds that
\begin{align*}
d_\mathcal{T}^p(\textsf{v},\textsf{w})=d_{\mathcal{T}_p}(\textsf{v},\textsf{w})
\end{align*}
for all leaf nodes $\textsf{v},\textsf{w}\in L\subset V$.
\end{lemma}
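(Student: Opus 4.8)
The plan is to reduce the statement to the standard description of leaf-to-leaf distances in an ultrametric tree via the height of their lowest common ancestor, and then to check that this description is compatible with the height transform $h\mapsto h_p$. Throughout, fix two leaves $\textsf{v},\textsf{w}\in L$ and let $\textsf{a}$ denote their lowest common ancestor, i.e. the node of minimal height lying on both $\mathcal{P}(\textsf{v},\textsf{r})$ and $\mathcal{P}(\textsf{w},\textsf{r})$. The unique path $\mathcal{P}(\textsf{v},\textsf{w})$ decomposes as the upward path $\mathcal{P}(\textsf{v},\textsf{a})$ followed by the downward path $\mathcal{P}(\textsf{a},\textsf{w})$, so that $d_\mathcal{T}(\textsf{v},\textsf{w})=d_\mathcal{T}(\textsf{v},\textsf{a})+d_\mathcal{T}(\textsf{a},\textsf{w})$.

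First I would record the elementary telescoping identity for the distance between a node and one of its ancestors. For any node $\textsf{u}$ and any ancestor $\textsf{b}$ on $\mathcal{P}(\textsf{u},\textsf{r})$, monotonicity of $h$ implies that each edge $e$ on this path has weight $w(e)=h(\text{par}(\cdot))-h(\cdot)$, whence the weights telescope to give $d_\mathcal{T}(\textsf{u},\textsf{b})=h(\textsf{b})-h(\textsf{u})$. Applying this with $\textsf{u}\in\{\textsf{v},\textsf{w}\}$ and $\textsf{b}=\textsf{a}$, and using that $h$ vanishes on the leaves $L$, we obtain $d_\mathcal{T}(\textsf{v},\textsf{a})=d_\mathcal{T}(\textsf{w},\textsf{a})=h(\textsf{a})$ and hence $d_\mathcal{T}(\textsf{v},\textsf{w})=2h(\textsf{a})$. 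Raising to the $p$-th power gives $d_\mathcal{T}^p(\textsf{v},\textsf{w})=2^p h(\textsf{a})^p$.

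Finally, I would apply the very same reasoning in the transformed tree $\mathcal{T}_p$. Since $\mathcal{T}_p$ is the same combinatorial tree equipped with the height function $h_p=2^{p-1}h^p$, which is again monotone (as $x\mapsto 2^{p-1}x^p$ is increasing on $\mathbb{R}_+$ for $p\geq1$) and still vanishes on $L$, it is ultrametric and the preceding identity applies verbatim; crucially, the combinatorial structure, and therefore the lowest common ancestor $\textsf{a}$ of $\textsf{v}$ and $\textsf{w}$, is unchanged. Thus $d_{\mathcal{T}_p}(\textsf{v},\textsf{w})=2h_p(\textsf{a})=2\cdot 2^{p-1}h(\textsf{a})^p=2^p h(\textsf{a})^p$, which matches the expression for $d_\mathcal{T}^p(\textsf{v},\textsf{w})$ and proves the claim. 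There is no real obstacle here: the argument is a direct computation, and the only points deserving care are the correct statement of the telescoping/LCA distance formula and the observation that the height transform rescales heights monotonically while leaving the tree topology—and hence all lowest common ancestors—intact, so that the formula may legitimately be invoked in both $\mathcal{T}$ and $\mathcal{T}_p$.
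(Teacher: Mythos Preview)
Your proof is correct and follows essentially the same approach as the paper: both arguments identify the lowest common ancestor $\textsf{a}$, use the telescoping of edge weights to obtain $d_\mathcal{T}(\textsf{v},\textsf{w})=2h(\textsf{a})$, and then repeat the computation in $\mathcal{T}_p$ to get $d_{\mathcal{T}_p}(\textsf{v},\textsf{w})=2h_p(\textsf{a})=2^p h(\textsf{a})^p$. Your telescoping identity $d_\mathcal{T}(\textsf{u},\textsf{b})=h(\textsf{b})-h(\textsf{u})$ is marginally more direct than the paper's detour through $d_\mathcal{T}(\textsf{v},\textsf{a})-d_\mathcal{T}(\textsf{w},\textsf{a})=0$, but the substance is identical.
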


\begin{proof}
Let $\textsf{v},\textsf{w}\in L$ be two leaf nodes in the ultrametric tree $\mathcal{T}$ with height function $h$ and let \textsf{a} be their common ancestor\footnote{If $\textsf{v},\textsf{w}\in L$ are leaf nodes their common ancestor is defined as the node included in the path from $\textsf{v}$ to $\textsf{w}$ closest to the root.}. Since paths between any two vertices are unique and all leaf nodes have the same distance to the root, it holds that
\begin{align*}
d_\mathcal{T}(\textsf{v},\textsf{a})-d_\mathcal{T}(\textsf{w},\textsf{a})&=d_\mathcal{T}(\textsf{v},\textsf{a})+d_\mathcal{T}(\textsf{a},\textsf{r})-d_\mathcal{T}(\textsf{a},\textsf{r})-d_\mathcal{T}(\textsf{w},\textsf{a})\\
&=d_\mathcal{T}(\textsf{v},\textsf{r})-d_\mathcal{T}(\textsf{w},\textsf{r})=0.
\end{align*}
Hence, 
\begin{align*}
\left(d_\mathcal{T}(\textsf{v},\textsf{w})\right)^p=\left(d_\mathcal{T}(\textsf{v},\textsf{a})+d_\mathcal{T}(\textsf{w},\textsf{a})\right)^p=2^p(h(\textsf{a})-h(\textsf{v}))^p=2^ph(\textsf{a})^p,
\end{align*}
where we use that $h(\textsf{v})=0$. Repeating the argument for the ultrametric tree $\mathcal{T}_p$ we conclude that $d_{\mathcal{T}_p}(\textsf{v},\textsf{w})=2d_{\mathcal{T}_p}(\textsf{v},\textsf{a})=2^ph(\textsf{a})^p$.
\end{proof}

Equipped with this result we are now able to prove the closed formula from \Cref{thm:KRultrametric}.

\begin{proof}[Proof for \Cref{thm:KRultrametric}]
Let $\text{KR}^p_{p,C}(\mu,\nu)=\text{UOT}_{p,C}(\mu,\nu)$ refer to UOT w.r.t. the distance on $\mathcal{T}$, which only depends on the distance between individual leaf nodes. Considering the $p$-th height transformed tree $\mathcal{T}_p$ and applying \Cref{lem:transformedtree} we conclude that 

\begin{align*}
\text{KR}_{p,C}^p(\mu^L,\nu^L)=&
\min_{\pi\in \mathcal{M}_+(L\times L)} \sum\limits_{\textsf{v},\textsf{v}^\prime \in L}d_{\mathcal{T}_p}(\textsf{v},\textsf{v}^\prime)\pi(\textsf{v},\textsf{v}^\prime)\\
&+\frac{C^p}{2}\left(\sum\limits_{\textsf{v}\in L}\left(\mu^L(\textsf{v})-\pi(\textsf{v},L)\right)+\sum\limits_{\textsf{v}^\prime\in L}\left(\nu^L(\textsf{v}^\prime)-\pi(L,\textsf{v}^\prime)\right)\right) \notag \\[2ex]
\textrm{s.t. }
&\pi(\textsf{v},L)  \leq  \mu^L(\textsf{v})\, , \quad \forall\, \textsf{v}\in L, \\[2ex]
&\pi(L,\textsf{v}^\prime)  \leq  \nu^L(\textsf{v}^\prime)\, , \quad \forall\, \textsf{v}^\prime \in L. \notag 
\end{align*}

The linear optimization problem can be decomposed on several subtrees. For this recall that by \Cref{lem:transportC} (i) there exists an optimal solution such that mass transportation is only considered on metric scales between two leaf nodes $\textsf{v},\textsf{v}^\prime\in L$ such that $d_{\mathcal{T}_p}(\textsf{v},\textsf{v}^\prime)\leq C^p$. If $\textsf{v}_0$ is the common ancestor of $\textsf{v},\textsf{v}^\prime$ then by the ultrametric tree properties of $\mathcal{T}$ (see also the proof of \Cref{lem:transformedtree}) the inequality $d_{\mathcal{T}_p}(\textsf{v},\textsf{v}^\prime)\leq C^p$ is equivalent to the height function
$h(\textsf{v}_0)\leq \frac{C}{2}$. Consider the set $\mathcal{R}(C)$ in \eqref{eq:subtreeroot} and for each $\textsf{v}\in\mathcal{R}(C)$ define subtrees $\mathcal{C}(\textsf{v})$ consisting of the children of $\textsf{v}$ and the subset of corresponding edges. By construction if $\textsf{v}_i,\textsf{v}_j\in \mathcal{R}(C)$ with $\textsf{v}_i \neq \textsf{v}_j$ then the subtrees are disjoint $\mathcal{C}(\textsf{v}_i)\cap\mathcal{C}(\textsf{v}_j)=\emptyset$ (\Cref{fig:explicitformula} (a) for an illustration). In particular, the linear optimization problem $\text{KR}_{p,C}^p(\mu^L,\nu^L)$ is decomposed on each individual subtree $\mathcal{C}(\textsf{v})$ for each $\textsf{v}\in\mathcal{R}(C)$. The distance on individual subtrees is set to be the $p$-th height transformed tree distance $d_{\mathcal{T}_p}$ which exactly captures the pairwise $p$-th power distance between leaf nodes belonging to the same subtree (\Cref{lem:transformedtree}). For an element $\textsf{v}\in\mathcal{R}(C)$ consider its subtree $\mathcal{C}(\textsf{v})$ with distance $d_{\mathcal{T}_p}$. By definition the maximal distance between its leaf nodes is bounded by $\nicefrac{C^p}{2}$. We augment the subtree $\mathcal{C}(\textsf{v})$ with a dummy node $\tilde{\textsf{v}}$ and introduce an edge $e=(\textsf{v},\tilde{\textsf{v}})$ with edge weight $\frac{C^p}{2}-2^{p-1}h(\textsf{v})^p$ (\Cref{fig:explicitformula} (b) for an illustration). Denote the augmented tree by $\tilde{\mathcal{C}}(\textsf{v})$. Considering the measures $\mu^L,\nu^L$ restricted to $\mathcal{C}(\textsf{v})$ we augment $\mu^L$ adding mass $\left(\mu^L(\mathcal{C}(\textsf{v}))-\nu^L(\mathcal{C}(\textsf{v}))\right)_+$ at $\tilde{\textsf{v}}$ and vice versa augment $\nu^L$ adding mass $\left(\nu^L(\mathcal{C}(\textsf{v}))-\mu^L(\mathcal{C}(\textsf{v}))\right)_+$ at $\tilde{\textsf{v}}$. This construction defines an equivalent OT problem on $\tilde{\mathcal{C}}(\textsf{v})$ \citep{guittet2002extended}. Hence, applying the closed formula for OT on general metric trees \cite[p.575]{evans2012phylogenetic} yields
\begin{align*}
2^{p-1}\sum_{\textsf{w}\in \mathcal{C}(\textsf{v})\setminus \lbrace \textsf{v} \rbrace}  \Big(\left( h(par(\textsf{w}))^p-h(\textsf{w})^p \right) \left\vert \mu^L(\mathcal{C}(\textsf{w}))-\nu^L(\mathcal{C}(\textsf{w}))\right\vert \Big) \\
+\left(\frac{C^p}{2}-h(\textsf{v})\right) \left\vert \mu^L(\mathcal{C}(\textsf{v}))-\nu^L(\mathcal{C}(\textsf{v}))\right\vert.
\end{align*}
Summing over all subtrees indexed by the set $\mathcal{R}(C)$ finishes the proof.
\end{proof}

\subsubsection{Proofs for the Barycenter}

\begin{proof}[Proof for \Cref{thm:bary_prop}]
\textcolor{mygreen}{\textbf{(ii)} Let $\mu$ be a $(p,C)$-barycenter and $\tilde{\mu}$ its augmented counterpart. Then, by \Cref{prop:multimarginal} there exists an optimal multi-coupling $\pi$, such that it holds $\mu=\tilde{\mu}_{\vert \mathcal{Y}}=(\tilde{T}_C^{J,p}\# \pi)_{\lvert \mathcal{Y}}$. Hence, for each $y\in \text{supp}(\tilde{\mu})$ there exists $K_y\geq 1$ and $K_y$ $J$-tupels $(x^1_{y,1},\dots,x^J_{y,1}),\dots,(x^1_{y,K_y},\dots,x^J_{y,K_y})$ such that for $k=1,\dots ,K_y$ it holds
\begin{align*}
    y=\tilde{T}^{J,p}_C(x^1_{y,k},\dots,x^J_{y,k})
\end{align*}
and $\mu(y)=\sum_{k=1}^{K_y}a^y_k$, where $a_k^y=\pi(x^1_{y,k},\dots,x^J_{y,k})$. For $i=1,\dots,J$ define $\tilde{\pi}_i\in \mathcal{M}_+(\Y\times \Y)$ by $\tilde{\pi}_i(y,x^i_{y,k})=a_k^y$ for all $y\in \text{supp}(\tilde{\mu})$, $i=1,\dots,J$ and $k=1,\dots,K_y$. Set $\tilde{\pi}_i$ to be zero everywhere else for $i=1,\dots,J$. By construction,  $\tilde{\pi}_i$ defines an OT plan between $\tilde{\mu}$ and $\tilde{\mu}_i$ for $i=1,\dots,J$. It holds
\begin{align*}
 \frac{1}{J} \sum_{i=1}^J \tilde{OT}_{\tilde{d}^p_C}(\tilde{\mu},\tilde{\mu_i})&=   \es{x\in \text{supp}(\pi)}{} c_{p,C}(x)\pi(x) 
\\
&=\es{x\in \text{supp}(\pi)}{} \frac{1}{J} \sum_{i=1}^J \tilde{d}_C^p\left(x_i,\tilde{T}_C^{J,p}(x)\right) \pi(x)\\
&=\frac{1}{J}\sum_{y\in \text{supp}(\tilde{\mu})}\sum_{i=1}^J\sum_{k=1}^{K_y}\tilde{d}^p_C(x_{y,k}^i,y)a_k^y\\
&=\frac{1}{J}\sum_{i=1}^J\sum_{y\in \text{supp}(\tilde{\mu})}\sum_{k=1}^{K_y}\tilde{d}^p_C(x_{y,k}^i,y)\tilde{\pi}^i(y,x^i_{y,k}),
\end{align*}
where the first equality follows from \Cref{prop:multimarginal} and the third and fourth by construction. Since $\tilde{\pi}_i$ is an OT plan between $\tilde{\mu}$ and $\tilde{\mu}_i$ it holds for all $i=1,\dots,J$ that
\begin{align*}
\sum_{y\in \text{supp}(\tilde{\mu})}\sum_{k=1}^{K_y}\tilde{d}^p_C(x_{y,k}^i,y)\tilde{\pi}_i(y,x^i_{y,k}) \geq \tilde{OT}_{p,C}^p(\tilde{\mu},\tilde{\mu}_i).
\end{align*}
Thus, it follows together with the previous equations that
\begin{align*}
    \tilde{OT}_{p,C}^p(\tilde{\mu},\tilde{\mu}_i)=\sum_{y\in \text{supp}(\tilde{\mu})}\sum_{k=1}^{K_y}\tilde{d}^p_C(x_{y,k}^i,y)\tilde{\pi}_i(y,x^i_{y,k}),
\end{align*}
i.e. $\tilde{\pi}_i$ is optimal. \Cref{lem:borelapplication_prop} now yields the first part of the statement. \\
For the second part assume that for any $(x_1,\dots,x_L)\in \Y^L$ it holds that $T^{L,p}(x_1,\dots,x_L)=T^{L,p}(y_1,x_2,\dots,x_L)$ is equivalent to $x_1=y_1$. Let $y\in \text{supp}(\tilde{\mu})$ and consider OT plans $\tilde{\pi}^1,\dots,\tilde{\pi}^J$ between $\tilde{\mu}$ and $\tilde{\mu}_i$, respectively. For $i=1,\dots,J$ consider $x^i$ such that $\tilde{\pi}_i(y,x^i)=a_i>0$. Assume that it holds $y\neq \tilde{T}_C^{J,p}(x_1,\dots,x_J)$. Denote the minimum of the $a_i$ as $a_0=\min_{i=1,\dots,J} a_i$. By construction, it follows that
\begin{align*}
    \tilde{F}_{p,C}(\tilde{\mu}-a_0 \delta_y+a_0 \delta_{\tilde{T}_C^{J,p}(x_1,\dots,x_J)})<F_{p,C}(\tilde{\mu}),
\end{align*}
which is a contradiction to $\tilde{\mu}$ being a barycenter of $\tilde{\mu}_1,\dots,\tilde{\mu}_J$. Thus, it holds $y=\tilde{T}_C^{J,p}(x_1,\dots,x_J)$. Now, assume w.l.o.g. there exists $x_1,z_1\in \Y$, such that it holds $\pi^1(y,x_1)>0$ and $\pi^1(y,z_1)>0$. However, by the previous argument this implies 
\begin{align*}
    \tilde{T}_C^{J,p}(x_1,\dots,x_J)=y=\tilde{T}_C^{J,p}(z_1,\dots,x_J).
\end{align*}
By assumption this is equivalent to $x_1=z_1$, thus it holds for all $x,y\in \Y$ and $i=1,\dots,J$ that $\pi^i(y,x)\in \{0,\mu(y) \}$.}\\
\textbf{(i)} By \Cref{prop:multimarginal} the objective value of the balanced multi-marginal and $(p,C)$-barycenter problem coincide and a $(p,C)$-barycenter is obtained as the push-forward of an optimal balanced multi-coupling under the map $\tilde{T}_C^{J,p}$ restricted to $\Y$. By construction and \Cref{cor:centroid_sets} any such measure is supported in $\mathcal{C}_{\text{KR}}(J,p,C)$. Thus, there always exists a $(p,C)$-barycenter whose support is restricted to $\mathcal{C}_{\text{KR}}(J,p,C)$ and the minimum over ${Y}$ and $\mathcal{C}_{\text{KR}}(J,p,C)$ coincide.\\
The second part is similar and we let $\tilde{\mu}$ be any $p$-Wasserstein barycenter. Then by \Cref{prop:multimarginal}, there exists a multi-coupling of $\tilde{\mu}_1,\dots ,\tilde{\mu}_J$, such that $\tilde{\mu}=\tilde{T}_C^{J,p}\# \tilde{\pi}$. Since any such push-forward measure can only have support in $\mathcal{C}_{\text{KR}}(J,p,C)\cup \{\dum\}$, it holds for $\mu=\tilde{\mu}_{\vert \Y}$ that $\text{supp}(\mu)\subset \mathcal{C}_{\text{KR}}(J,p,C)$. It remains to show the upper bound on the total mass. By the equivalence to the multi-marginal problem and by \Cref{lem:borelapplication_prop} (ii) any $(p,C)$-barycenter $\mu$ cannot have mass on a point which is constructed from a set of points $(x_1,\dots ,x_J)$ for which $2\lvert \BS(x_1,\dots ,x_J)\rvert \geq J$. Additionally, by part $(ii)$ we know that there exists UOT plans, such that the mass of each $(p,C)$-barycenter support point is fully transported to points it is constructed from. Let $(a_1,\dots , a_K)$ be the weight vector of the support points of the $(p,C)$-barycenter, then it holds that
\begin{align*}
\sum_{i=1}^J \mathbb{M}(\mu_i)- \lceil J/2 \rceil \es{k=1}{K} a_k \geq 0 ,
\end{align*}
since by the previous argument and \Cref{lem:borelapplication_prop}, any $(p,C)$-barycenter support point $x_k$ reduces the maximum available mass by at least $\lceil J/2 \rceil  a_k$ and by Lemma~\ref{lem:mass_cap}, the total mass of the $(p,C)$-barycenter is bounded by the sum of the total masses of the $\mu_i$. Therefore it holds that
\begin{align*}
 \mathbb{M}(\mu)  = \es{k=1}{K} a_k \leq \lceil J/2 \rceil^{-1} \sum_{i=1}^J \mathbb{M}(\mu_i) \leq \frac{2}{J}\sum_{i=1}^J \mathbb{M}(\mu_i).
\end{align*}
\textbf{(iii)} The multi-marginal problem between $\tilde{\mu}_1,\dots ,\tilde{\mu}_J$ is a balanced problem, thus we can pose this as a linear program with a total of $\prod_{i=1}^JM_i$ variables and $\sum_{i=1}^N M_i+J$ constraints. As all measures have the same total mass, we can drop one arbitrary marginal constraint for each measure besides the first. Thus, the rank of the constraint matrix in the corresponding constraint is bounded by $\sum_{i=1}^N M_i +1$. Hence, each basic feasible solution of the linear program has at most $\sum_{i=1}^N M_i + 1$ non-zero entries (see \citep{luenberger1984linear} for details). Let $\pi$ be such a solution. By \Cref{prop:multimarginal} the measure $\tilde{\mu}=\tilde{T}_C^{J,p}\# \pi$ is a $p$-Wasserstein barycenter and by construction it has at most $\sum_{i=1}^N M_i +1$ support points. Due to the upper bound on the total mass of the $(p,C)$-barycenter in property $(i)$, we can guarantee that there is non-zero mass at $\dum$ for $J>2$, hence in this case, restricting the measure to $\Y$ reduces the support size by one. For $J=2$, we note that the multi-marginal problem is just the augmented UOT problem. By construction we either have a point $x$ in the support of one of the two measures, such that there is transport between $x$ and $\dum$ or both measures have equal mass at $\dum$ and it is optimal to leave this mass in place. In the first case, we have mass at $\tilde{T}^{J,p}_C(x,\dum)=\dum$, thus the support size can be reduced by one and in the second the problem is equivalent to the OT problem and thus the barycenter has at most $M_1+M_2-1$ support points. Finally, by property $(i)$ the support of any $(p,C)$-barycenter is contained in $\mathcal{C}_{KR}(J,p,C)$, thus the cardinality of this set also provides a trivial upper bound on the support size of any $(p,C)$-barycenter. Taking the minimum over both quantities, we conclude 
\begin{align*}
\lvert \text{supp}(\mu)  \rvert \leq \min \left\{ \lvert \mathcal{C}_{KR}(J,p,C) \rvert , \sum_{i=1}^J M_i \right\}.
\end{align*}
\textbf{(iv)} For any $\mu \in \msrY$, it holds
\begin{align*}
    F_{p,C_1}^p(\mu) = \frac{1}{J}\es{i=1}{J}\text{KR}_{p,C_1}^p(\mu,\mu_i)\leq \frac{1}{J}\es{i=1}{J}\text{KR}_{p,C_2}^p(\mu,\mu_i) = F_{p,C_2}^p(\mu),
\end{align*}
where the inequality follows from \Cref{prop:KRdistance} (iv). Taking the infimum over all measures in $\msrY$ on both sides completes the proof.\\
\textbf{(v)} Let $C\leq d_{\min}^\prime$, then by \Cref{prop:KRdistance} (ii) it holds
\begin{align*}
\uamin{\mu \in \msrY} F_{p,C}(\mu)&= \ \uamin{\mu \in \msrY}  \frac{C^p}{2J} \sum_{i=1}^J TV(\mu,\mu_i) \\
&= \uamin {a \in \mathbb{R}_+^K} \frac{C^p}{2J} \sum_{i=1}^J \es{k=1}{K} \lvert a_k - a^i_k \rvert \\
 &=\uamin {a \in \mathbb{R}_+^K} \frac{C^p}{2J} \es{k=1}{K}  \sum_{i=1}^J \lvert a_k - a^i_k \rvert,
\end{align*} 
where the change in the $\argmin$ in the second line follows from the fact that the total variation can only increase if we place mass outside of the support of the measures. Thus it suffices to consider measures supported on the union of the supports. Now, we note that the $K$ summands are independent to each other, thus we can minimise them separately. Hence, for the $k$-th entry of $a$ it holds that
\begin{align*}
a_k \in \uamin{a \in \mathbb{R}_+} \esn \lvert a_k - a^i_k \rvert= med(a_k^1,\dots ,a_k^J)
\end{align*}
which yields the claim. \\

\bigskip
\textbf{(vi)} 
Let $\mathbb{M}_i=\mathbb{M}(\mu_i)$ for $i=1,\dots ,J$ and set $\mathbb{M}_0=0$. Assume that $J$ is odd. Let $\mu$ be a $(p,C)$-barycenter of $\mu_1,\dots ,\mu_J$ with $\mu(\mathcal{Y})\in [\mathbb{M}_{k-1},\mathbb{M}_k]$. In particular, $\mu$ fulfills the non-mass-splitting property in (ii). Let $a\in (0,\mathbb{M}_k-\mathbb{M}(\mu)]$ and $\tilde{\mu}$ the augmented measure for $\mu$. By construction, we can find support points $x_k,\dots ,x_J\neq \dum$ of the augmented measures $\tilde{\mu}_k,\dots ,\tilde{\mu}_J$ from which w.l.o.g. mass $a$ is transported to $\dum$ in $\mu$. If one of the points has mass smaller $a$, we can just replace $a$ with the minimum of the masses of the points and repeat the argument until we have considered a total mass of $a$. 
Set $x_0=\tilde{T}_C^{J,p}(\dum,\dots, \dum,x_k,\dots ,x_J)$ and notice that if $x_0=\dum$, we do not change the objective function in the augmented problem (\Cref{lem:add_mass}) by adding this point which means w.l.o.g. $x_0\neq \dum$. In this case, we have 
\begin{align*}
x_0=\uamin{x\in \mathcal{Y}} \sum_{i=k}^{J} \tilde{d}_C^p(x_i,x).
\end{align*}
Now, the objective cost of not having mass $a$ at $x_0$ is $aC^p(J-k)/2$, while the cost of adding $a\delta_{x_0}$ to $\mu$ is equal to $a(kC^p/2 + \sum_{i=k}^{J}\tilde{d}_C^p(x_i,x_0))$. Hence, adding the point improves the value of the Fr\'echet functional, if
\begin{align*}
\es{i=k}{J}\tilde{d}_C^p(x_i,x_0)\leq  C^p(J-2k)/2.
\end{align*}
For $2k>J$, the right hand side will always be negative, so we can not improve. Thus, we assume $2k<J$. By assumption it holds $C\geq J^{\frac{1}{p}} \text{diam}(\mathcal{Z})$. Hence,
\begin{align*}
 &\frac{C^p}{2}\geq \frac{J}{2}  diam(\mathcal{Z})^p \\
 \Leftrightarrow \quad &\frac{C^p}{2\text{diam}(\mathcal{Z})^p}\geq \frac{J}{2}\geq \frac{J-k}{J-2k} \\
 \Leftrightarrow  \quad &C^p(J-2k)/2\geq \text{diam}(\mathcal{Z})^p (J-k)\geq \es{i=k}{J}\tilde{d}_C^p(x_i,x_0).
\end{align*}
Therefore, for $2k<J$ the objective value of $\mu$ can always be improved by increasing its mass by $a$, as long as $k < \lceil J/2 \rceil$. Thus, since $\mu$ is a barycenter it holds $\mathbb{M}(\mu)\geq \mathbb{M}\mu_{\lceil J/2 \rceil}$.\\
An analog, converse argument yields that if $k>J/2$, we can always improve the objective value of $\mu$, since removing and then re-adding any mass to $\mu$ increases the objective value by the previous argument. Hence, it holds $\mathbb{M}(\mu)= \mathbb{M}\mu_{\lceil J/2 \rceil}$.\\
Now, assume $J$ is even. For $2k\neq J$ nothing in the previous argument changes. However, for $2k=J$ (note that this can only hold now that $J$ is even), the right hand side is zero, however, if all the $x_i$ for $i=k,\dots, J$, are identical to $x_0$ (in particular, there exists a point contained in the support of at least half of the measures), then the left hand side will also be zero. In this case, the presence of this point does not change the objective value and there are $(p,C)$-barycenters of different total masses. However, we can still always choose to not place mass in such cases, to obtain a $(p,C)$-barycenter of the desired total mass.
\end{proof}

\begin{proof}[Proof for \Cref{lem:cluster}]
It suffices to show that there is no centroid point, which is constructed from points from two or more different sets $B_r$. Assume there is a point $y_0 \in \mathcal{C}_{KR}(J,p,C)$, such that $y_0$ is constructed, among others, from $x_1 \in B_r$ and $x_2\in B_s$ for $r\neq s$. We distinguish two cases. Assume $y_0 \in B_r$, then it holds $d^p(x_1,y_0)>2^{p-1}C^p\geq C^p$ and $y_0$ would not be in the restricted centroid set. The analogue argument holds for $y_0 \in B_s$. Now, assume $y_0$ is neither in $B_r$ nor $B_S$. Since $d(B_r,B_S)>2^{1/p}C$, it holds either $d^p(B_r,y_0)>C^p$ or $d^p(B_s,y_0)>C^p$. Thus, we obtain another contradiction to $y_0\in  \mathcal{C}_{KR}(J,p,C)$. Hence, $ \mathcal{C}_{KR}(J,p,C)$ only contains centroids constructed from points within one $B_r$ and by convexity of the $B_r$, any centroid point constructed from points within $B_r$ is again in $B_r$. \Cref{thm:bary_prop} (ii) yields that there will always be an optimal solution which only transports within each $B_r$, thus the $R$ problems are in fact independent and we can separate them without changing the objective value. 
\end{proof}

\end{appendix}

\end{document}